   \def\DD{{\mathbb D}}
 \def\RR{{\mathbb R}}
\newtheorem*{teo*}{Theorem}
\newtheorem{teo}{Theorem}[section]
\newtheorem{lema}[teo]{Lemma}
\newtheorem{prop}[teo]{Proposition}
\newcommand{\bi}{\begin{itemize}}
	\newcommand{\ei}{\end{itemize}}
\theoremstyle{definition}
\theoremstyle{remark}
\newcommand{\eps}{\varepsilon}
\newcommand{\comment}[1]{}
\begin{document}
	
	\newtheorem{theorem}{Theorem}[section]
	\newtheorem{lemma}[theorem]{Lemma}
	\newtheorem{proposition}[theorem]{Proposition}
	\newtheorem{corollary}[theorem]{Corollary}
	
	\theoremstyle{definition}
	\newtheorem{definition}[theorem]{Definition}
	\newtheorem{remark}[theorem]{Remark}
	\newtheorem{example}[theorem]{Example}

	\title[Stability conjecture for geodesic flows without conjugate points]{The stability conjecture for geodesic flows of compact manifolds without conjugate points and quasi-convex universal covering}
\author{Rafael Potrie}
	\address{Centro de Matem\'{a}ticas, UDELAR
		Montevideo, Uruguay} \email{rpotrie@cmat.edu.uy}
	\thanks{ R.P. is partially supported by C.S.I.C.  }
	\author{Rafael O. Ruggiero}
	\address{Departamento de Matem\'atica, Pontif\'{\i}cia Universidade Cat\'olica do Rio de Janeiro,
		Rio de Janeiro, RJ, Brazil, 22453-900} \email{rorr@mat.puc-rio.br}
	\thanks{The research project of R.O.R. is partially supported by FAPERJ (Cientistas do nosso estado, Programa de Apoio a Projetos Tem\'{a}ticos), Pronex de Geometria n. E26/010.001249/2016, (Brazil)   }
	
	
	\date{November 20th, 2023}
	\keywords{Geodesic flow, conjugate points,quasi-convex universal covering,  hyperbolic set, Anosov flow}

	\begin{abstract}
		Let $(M,g)$ be a $C^{\infty}$ compact, boudaryless connected manifold without conjugate points with quasi-convex universal covering and divergent geodesic rays. 
		We show that the geodesic flow of $(M,g)$ is $C^{2}$-structurally stable from Ma\~{n}\'{e}'s viewpoint if and only if  it is an Anosov flow, proving the so-called $C^{1}$-stability conjecture.  
		
	\end{abstract}
	\maketitle

	\section{Introduction}
	
	The $C^{1}$-stability conjecture for geodesic flows remains a challenging open problem in dynamics until present  because Pugh's $C^{1}$-closing lemma \cite{kn:Pugh} is not known for the category of geodesic flows. The $C^{1}$-structural stability conjecture for geodesic flows states that a $C^{1}$-structurally stable geodesic flow is an Anosov flow. Every Anosov geodesic flow acting on a compact Riemannian manifold is of course $C^{1}$-structurally stable by the celebrated work of Anosov \cite{kn:Anosov}, the $C^{1}$ stability conjecture is the converse of Anosov's result. Here, one needs to make some precisions on regularity: by $C^1$-structurally stable geodesic flow of a metric $g$ we mean that there is a $C^2$-open set of metrics containing $g$ so that for every $g'$ in the open set we have that the geodesic flows of $g$ and $g'$ are orbit equivalent; note that a $C^2$-neighborhood of $g$ provides perturbations of the geodesic vector field which are $C^1$ (but not all possible such perturbations). Some places in the literature, for instance \cite{kn:CM} use $C^2$-structural stability to refer to this same notion, but we keep the definition as in \cite{kn:ARR,kn:Ruggiero-AHL}.

The proof of the $C^{1}$ stability conjecture for diffeomorphisms of compact manifolds is due to Ma\~{n}\'{e} \cite{kn:Mane,kn:Mane88} for arbitrary dimensions and independently by Liao \cite{kn:Liao} for surfaces. Ma\~{n}\'{e}'s proof has roughly two main parts: first of all the $C^{1}$ structural stability implies that the closure of the set of hyperbolic periodic orbits is a hyperbolic set for the diffeomorphism; the second part is to show that the nonwandering set of the diffeomorphism is the closure of the set of periodic orbits. The first part was extended to geodesic flows in the more general context of Ma\~{n}\'{e} type perturbations, giving rise to the concept of $C^{2}$ structural stability from Ma\~{n}\'{e}'s viewpoint (see for instance \cite{kn:ARR}). Recent developments of the theory of contact homology for contact flows in 3-manifolds \cite{kn:Irie} yield that the set of periodic orbits of the geodesic flow of a $C^{\infty}$ generic metric on a compact surface are dense after projected to the surface. This is not enough to ensure density in the unit tangent bundle which would allow, combined with \cite{kn:ARR}) would get that $C^{1}$ structurally stable geodesic flows (in the sense that a $C^2$-open neighborhood of metrics have conjugated geodesic flow) of compact surfaces must be Anosov flows. Using another approach that allowed them to circunvent this, in \cite{kn:CM} the $C^1$-structural stability conjecture for geodesic flows was obtained. The extension of their argument to higher dimensions is far from reach at present. 
	
	Recent results indicate that the conjecture for compact manifolds without conjugate points may follow from the theory of the global geometry of such manifolds \cite{kn:Ruggiero-TAMS}, \cite{kn:Ruggiero-AHL} that would play in some sense the role of the missing $C^{1}$-closing lemma. The main result of this article is a proof of the $C^{1}$-stability conjecture for an important category of geodesic flows of compact manifolds without conjugate points. 
	\bigskip
	
	\begin{theorem} \label{main}
		Let $(M^{n},g)$ be a compact $C^{\infty}$, $n$-dimensional, connected Riemannian manifold without conjugate points  such that the universal covering is a quasi-convex space where
		geodesic rays diverge. Then, the geodesic flow is $C^{2}$ structurally stable from Ma\~{n}\'{e}'s viewpoint if and only if the geodesic flow is Anosov.
	\end{theorem}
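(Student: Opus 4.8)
The plan is to treat the two implications separately. The implication ``Anosov $\Rightarrow$ $C^{2}$ structurally stable from Ma\~n\'e's viewpoint'' is immediate from Anosov's structural stability theorem \cite{kn:Anosov}: a $C^{2}$-neighborhood of $g$ yields geodesic vector fields that are $C^{1}$-close to the one of $g$, and an Anosov flow is orbit equivalent to every flow sufficiently $C^{1}$-close to it, so the geodesic flows of all metrics in a suitable $C^{2}$-neighborhood of $g$ are orbit equivalent. (We may remark that an Anosov geodesic flow is automatically of the type considered here: it has no conjugate points by Klingenberg--Ma\~n\'e and its universal covering is Gromov hyperbolic, hence quasi-convex with divergent geodesic rays, so the statement is not vacuous.)

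For the converse, assume the geodesic flow $\phi_{t}$ on $T_{1}M$ is $C^{2}$ structurally stable from Ma\~n\'e's viewpoint. The first step uses the extension of the ``first half'' of Ma\~n\'e's argument to Ma\~n\'e-type perturbations of geodesic flows \cite{kn:ARR}: structural stability forces every closed orbit of $\phi_{t}$ to be hyperbolic and makes $\Lambda:=\overline{\mathrm{Per}(\phi_{t})}$ a hyperbolic set, which is nonempty since $M$ is compact. Hence it suffices to prove that periodic orbits are \emph{dense} in $T_{1}M$: then $T_{1}M=\Lambda$ is a hyperbolic set and $\phi_{t}$ is Anosov. This is precisely the point where Ma\~n\'e, for diffeomorphisms, invoked the $C^{1}$ closing lemma --- unavailable for geodesic flows --- and, as recalled in the introduction, the point that the contact-homology results do not reach (they only give density after projection to $M$).

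The density of $\mathrm{Per}(\phi_{t})$ in $T_{1}M$ is where I would bring in the global geometry of the universal covering $\widetilde M$, as a substitute for the closing lemma. Since $\phi_{t}$ preserves the Liouville measure, Poincar\'e recurrence provides a dense set of recurrent vectors, so it is enough to approximate each recurrent $v$ by periodic orbits. Absence of conjugate points gives the usual global structure of $\widetilde M$ (diffeomorphic to $\RR^{n}$, unique geodesics between points, stable and unstable Green subspaces along every orbit); divergence of geodesic rays makes Busemann functions, horospheres and the horospherical foliations globally well defined; and quasi-convexity provides the stability and regularity of these objects established in \cite{kn:Ruggiero-TAMS,kn:Ruggiero-AHL}. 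A near-return of the orbit of a recurrent $v$ corresponds, upstairs, to a deck transformation $T\in\pi_{1}(M)$ that moves a point of $\widetilde\gamma_{v}$ by a short amount and almost along the geodesic; the free homotopy class of $T$ carries a closed geodesic, i.e.\ $T$ is axial, and quasi-convexity (stability of geodesics) forces its axis --- together with its velocity field --- to stay uniformly $C^{0}$-close, as an orbit arc in $T_{1}M$, to the long segment of $\gamma_{v}$ that it almost closes. Letting the return times tend to infinity produces periodic orbits converging to $v$, which gives the required density.

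The main obstacle is exactly this last step: running the closing-lemma substitute with only quasi-convexity and divergence of geodesic rays at hand, rather than genuine Gromov hyperbolicity of $\widetilde M$ (under which the Morse lemma would make the argument routine). One must control quantitatively, and uniformly as the return time grows, how far the axis of the near-return deck transformation and its velocity field can drift from the recurrent geodesic; it is here that the structure theory of \cite{kn:Ruggiero-TAMS,kn:Ruggiero-AHL} for quasi-convex coverings with divergent rays is indispensable, and it is also the reason these hypotheses appear in the statement. Once this is in place, the remainder --- the reduction through \cite{kn:ARR} and the passage from ``$\Lambda$ hyperbolic and equal to $T_{1}M$'' to ``$\phi_{t}$ Anosov'' --- is formal.
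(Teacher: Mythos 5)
The skeleton of your argument matches the paper's: the Anosov direction via \cite{kn:Anosov}, the reduction via \cite{kn:ARR} to the hyperbolicity of $\Lambda=\overline{\mathrm{Per}(\phi_t)}$, and the recognition that everything hinges on a closing-lemma substitute for recurrent orbits. But the step you yourself flag as ``the main obstacle'' is precisely the content of the paper, and the mechanism you sketch for it would not close the gap. Quasi-convexity only gives $d_H(\gamma,\beta)\leq K\max\{d(x_1,x_2),d(y_1,y_2)\}+C$ with a fixed additive constant $C$; so the axis of a near-return deck transformation is only forced to lie within a \emph{bounded} (not small) distance of the recurrent geodesic, and letting return times grow does not make the approximating closed orbits converge to $v$. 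Worse, under the standing hypotheses the flow need not be expansive: nontrivial bi-asymptotic classes (generalized strips, Lemma 2.10 of the paper) can exist, so coarse Hausdorff estimates cannot pin an orbit down to a point. This is exactly why genuine Gromov hyperbolicity would make the argument routine and why its absence is the whole difficulty.

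The paper's route is different and uses the hyperbolicity of $\Lambda$ \emph{before} attempting any closing. Points of $\Lambda$ are ``hyperbolic points'' (transverse intersection of $\tilde{\mathcal F}^s$ and $\tilde{\mathcal F}^u$), and a degree/transversality argument for Lipschitz graphs (Lemma 5.5) yields genuine local product neighborhoods around them (Theorem 5.2) despite the possible failure of transversality nearby. One then builds $s$- and $u$-foliated cross sections fibered over pieces of vertical fibers (Section 4), proves that the Poincar\'e return map of a recurrent point topologically contracts the stable direction outside the bi-asymptotic class (Proposition 6.1, Lemma 7.7), and applies a Lefschetz fixed point theorem on the product-like foliated set (Lemma 7.8) to produce periodic points arbitrarily close to $I(\eta)$ for every recurrent $\eta$ in a neighborhood of $\Lambda$. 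Since recurrent points have full Liouville measure, a whole open neighborhood of $\Lambda$ lies in $\Lambda$; openness plus closedness plus connectedness of $T_1M$ then gives $\Lambda=T_1M$. Note that this also corrects your reduction: one cannot aim directly at global density of periodic orbits, because the local product structure and the contraction estimates are only available near hyperbolic points, i.e.\ near $\Lambda$; the open--closed argument is what globalizes the conclusion.
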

	\bigskip
	
	Theorem \ref{main} extends the main result of \cite{kn:Ruggiero-AHL}, where the same statement is proved assuming that the manifold has dimension 2 or 3. Let us describe briefly the ideas of the proof in low dimensions in order to give a context to Theorem \ref{main}. The key steps of the proof in the 3-dimensional case are the following. The first step is to show that the $C^{2}$-structural stability from Ma\~{n}\'{e}'s viewpoint implies that the closure of the set of periodic orbits is a hyperbolic set for the geodesic flow for every $n \in \mathbb{N}$ \cite{kn:ARR}. The second step is the proof of a topological generalization of the so-called flat strip Theorem assuming that the universal covering $(\tilde{M}, \tilde{g})$ is a quasi-convex space where geodesic rays diverge \cite{kn:Ruggiero-AHL}. The generalization of the flat strip Theorem combined with the hyperbolicity of the closure of periodic orbits implies that the fundamental group of $M^{n}$ is a Preissmann group: every abelian subgroup is infinite cyclic. The Preissmann property and some metric properties of abelian subgroups of the fundamental group of manifolds without conjugate points \cite{kn:CS} lead to the the third step of the proof : the application of Thurston's geometrization theory for 3-dimensional manifolds to conclude that the manifold admits a Riemannian structure of negative curvature. The combination of this fact and the generalized flat strip Theorem yield that closed orbits are dense in $T_{1}M$, which implies that the geodesic flow is Anosov. In the case of surfaces the density of closed orbits follows without the need of Thurston's geometrization. 
	
	The strategy of the proof of Theorem \ref{main} follows the first and the second steps of the proof of the 3-dimensional case, which hold in any dimension. However, the third step does not extend to higher dimensional manifolds. The main contribution of this article is to explore the existence of continuous invariant foliations (see Corollary \ref{fol-continuity}) for the geodesic flow under the assumptions of Theorem \ref{main}, combined with the hyperbolicity of the closure of the set of hyperbolic orbits, to show the density of periodic orbits. Notice that the argument used to prove Theorem \ref{main} in the 3-dimensional case gives more information about the manifold $M$ than the argument used in higher dimensions: $M^{3}$ admits a Riemannian structure of negative curvature. As far as we know, the existence of a Riemannian metric of negative curvature in a compact Riemannian manifold whose geodesic flow is Anosov is an open problem. 
	
	The structure of the paper is the following: from Sections 2 to 4 we introduce some preliminaries of the theory of Riemannian manifolds without conjugate points; we make a survey of some basic definitions and results about hyperbolic dynamics; and present what is known about the structure of the "non-expansive" set of the geodesic flow: the so-called generalized strips. This is the main issue of the article: if we knew that there were no generalized strips of the geodesic flow the proof of Theorem \ref{main} would be relatively straightforward from already know results. In Section 5 we show the existence of generalized local product neighborhoods for every point in an open neighborhood of the closure of the set of hyperbolic periodic points. This result is interesting in itself since it shows somehow that the transversality of the invariant foliations of the geodesic flow, the horospherical foliations, is not necessary to have a local product structure in an open neighborhood of a point. From Sections 6 on we define some foliated cross sections of the geodesic flow and some foliated subsets to study the action of the Poincar\'{e} map of the section on these sets. Roughly speaking, we show that the Poincar\'{e} map has a sort of topologically hyperbolic behavior restricted to such sets, in analogy to what happens with foliated neighborhoods in hyperbolic dynamics. Then, applying Lefchetz fixed point Theorem we deduce that the generalized strip of a recurrent point  in a small neighborhood of the closure of the set of closed orbits must be arbitrarilly approached by periodic orbits, a sort of closing lemma for recurrent strips. From this and the hyperbolicity of the closure of periodic orbits we finally conclude the proof of Theorem \ref{main} in Section 7. 

As a further application of the ideas developed in the article, we show in Section 8 the following statement that is interesting in its own:

\begin{theorem} \label{main2}
Let $(M^{n},g)$ be a compact $C^{\infty}$, $n$-dimensional, connected Riemannian manifold without conjugate points  such that the universal covering is a quasi-convex space where geodesic rays diverge. Suppose that the geodesic flow has a hyperbolic periodic orbit. Then there exists a topological horseshoe accumulating the periodic orbit, and in particular, the topological entropy of the geodesic flow is positive. 
\end{theorem}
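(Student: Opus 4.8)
The plan is to manufacture a homoclinic orbit of the hyperbolic periodic orbit $\gamma$ out of the dynamics on the ideal boundary of the universal covering, to verify that along this orbit the invariant manifolds of $\gamma$ meet transversally, and then to run a Birkhoff--Smale type construction, using the generalized local product neighbourhoods of Section~5, the continuity of the invariant foliations (Corollary~\ref{fol-continuity}), and the topologically hyperbolic behaviour of the Poincar\'e return map established in Sections~6--7. Fix the closed geodesic $c$ whose orbit in $T_{1}M$ is $\gamma$, lift it to an axis $\tilde c\subset\tilde M$ with ideal endpoints $\tilde c(\pm\infty)$ and covering translation $\alpha=[c]$. Since $(M,g)$ has no conjugate points, $\tilde M$ is diffeomorphic to $\RR^{n}$, so $\pi_{1}(M)$ is infinite and, $M^{n}$ being compact with $n\ge 2$, non-cyclic; moreover the group is non-elementary, so one can pick $\beta\in\pi_{1}(M)$ for which the four points $\tilde c(-\infty),\tilde c(+\infty),\beta\tilde c(-\infty),\beta\tilde c(+\infty)$ are pairwise distinct (the bad $\beta$ form finitely many cosets of elementary subgroups, which cannot exhaust $\pi_{1}(M)$).

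First I would show $W^{s}(\gamma)\cap W^{u}(\gamma)\ne\gamma$. Because $\gamma$ is hyperbolic, the axis $\tilde c$ bounds no nontrivial strip and the Green stable and unstable subbundles are transverse along its orbit; the stable and unstable sets of that orbit are the strong stable and strong unstable Busemann horospheres determined by $\tilde c(+\infty)$ and $\tilde c(-\infty)$. Now consider, for a fixed $p\in\tilde c$, the geodesic segments of $\tilde M$ joining $\beta\alpha^{-m}(p)$ to $\alpha^{n}(p)$, and pass to a subsequential limit as $m,n\to\infty$: by quasi-convexity, the divergence of geodesic rays, and the hyperbolic estimates along $\tilde c$, these segments remain within a bounded neighbourhood of $p$, so the limit is a complete geodesic $\sigma$ with $\sigma(+\infty)=\tilde c(+\infty)$ and $\sigma(-\infty)=\beta\tilde c(-\infty)$. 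Thus $\sigma$ is forward asymptotic to $\tilde c$ and backward asymptotic to $\beta\tilde c$, both of which project to $c$, so the orbit $h$ of $\dot\sigma(0)$ in $T_{1}M$ lies in $W^{s}(\gamma)\cap W^{u}(\gamma)$, and $h\ne\gamma$ since $\sigma$ is not a covering translate of $\tilde c$ (its ideal endpoints are in general position and the strip of $\tilde c$ is trivial). Finally, at each point of $h$ the Green subspaces satisfy $E^{ss}\cap E^{uu}=\{0\}$: a nonzero common vector would be both contracted in forward time (asymptoticity to the hyperbolic $\gamma$) and expanded in forward time (asymptoticity to $\beta\tilde c$ in negative time), which is absurd. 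Since $\dim W^{s}(\gamma)=\dim W^{u}(\gamma)=n$ while $\dim T_{1}M=2n-1$, this yields a homoclinic orbit along which $W^{s}(\gamma)$ and $W^{u}(\gamma)$ are transverse.

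Next I would build the horseshoe. Fix $v\in\gamma$ and a cross-section $S$ transverse to the flow at $v$, foliated by local strong stable and strong unstable horospheres; by Section~5 and Corollary~\ref{fol-continuity}, $v$ has a generalized local product neighbourhood in $S$. The orbit of $h$ enters this neighbourhood for all sufficiently large $|t|$; choosing, in the Birkhoff--Smale fashion, a foliated rectangle $R\subset S$ around a return of $h$ very close to $\gamma$, the uniform hyperbolicity of $\gamma$ forces a suitable power of the Poincar\'e return map $P$ to stretch $R$ across itself in the \emph{u-expanding/s-contracting} topological sense of Sections~6--7, the generalized local product structure playing the role of the transversality of the horospherical foliations that may fail away from $\gamma$. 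The maximal invariant set $\Lambda=\bigcap_{k\in\ZZ}P^{k}(R)$ is then a topological horseshoe: a Lefschetz / covering-relation count as in Section~7 produces, for each admissible finite word, a periodic point of $P$ realizing it, hence a continuous surjection of $P|_{\Lambda}$ onto a full shift on two symbols intertwining the dynamics. Since every point of $\Lambda$ shadows $\gamma$ for arbitrarily long time intervals, the horseshoe accumulates $\gamma$; and as the geodesic flow then carries a subsystem factoring onto a positive-entropy subshift, its topological entropy is positive. (Once the homoclinic orbit is known to be transverse, the classical Birkhoff--Smale theorem already yields the horseshoe, since $W^{s}(\gamma),W^{u}(\gamma)$ are $C^{\infty}$ immersed submanifolds of the $C^{\infty}$ flow; the argument above is the one that fits the framework of this paper.)

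The step I expect to be the main obstacle is the construction of $\sigma$ together with the verification that it feeds the topological horseshoe criterion: one must check that the limiting geodesic really stays in a bounded region (so that it is a genuine bi-asymptotic geodesic and not an escaping one), which is precisely where the hyperbolicity of $\gamma$ — rather than mere non-triviality of the horospheres — is used, and that the pointwise transversality $E^{ss}\cap E^{uu}=\{0\}$ along $h$ upgrades to \emph{uniform} topological expansion and contraction on a whole foliated box around a near-$\gamma$ return of $h$, which is the quantitative input the fixed-point count in Section~7 requires. The degenerate configurations (for instance $\pi_{1}(M)$ cyclic, or no $\beta$ in general position) are routine to dispose of and cannot occur here because $\tilde M\cong\RR^{n}$ is non-compact and admits a cocompact, non-elementary group of isometries once $n\ge 2$.
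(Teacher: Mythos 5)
There is a genuine gap, and it sits at the heart of your argument: the claimed transversality of the homoclinic intersection does not follow from what you wrote. A nonzero vector $v\in T_{h}W^{s}(\gamma)\cap T_{h}W^{u}(\gamma)$ satisfies $d\phi_{t}v\to 0$ as $t\to+\infty$ \emph{and} $d\phi_{t}v\to 0$ as $t\to-\infty$; tangency to $W^{u}(\gamma)$ gives backward contraction, not ``forward expansion,'' and away from the periodic orbit these two decay conditions are perfectly compatible --- this is exactly what happens at a homoclinic tangency, which your soft argument cannot exclude. The paper is explicit on this point: Proposition 8.2 only produces a homoclinic intersection \emph{not necessarily transverse}, and the whole Lefschetz-based topological horseshoe machinery of Sections 5--7 (Lemma 7.8 and the covering-relation argument, following Kennedy--Yorke and Gromov) exists precisely to replace Birkhoff--Smale when transversality may fail. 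Your fallback ``the classical Birkhoff--Smale theorem already yields the horseshoe'' therefore cannot be invoked.

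The construction of the connecting geodesic $\sigma$ is also not justified under the stated hypotheses. Quasi-convexity plus divergence of geodesic rays does not give a visibility property: the segments from $\beta\alpha^{-m}(p)$ to $\alpha^{n}(p)$ have no a priori reason to stay in a bounded neighbourhood of $p$, and the hyperbolicity of the single closed orbit $\gamma$ controls the geometry only in a tubular neighbourhood of its axis, not along chords between far-apart translates. Likewise, the existence of $\beta$ with the four ideal endpoints pairwise distinct is asserted via a ``non-elementary group'' argument that is not available here (one hyperbolic closed geodesic does not give the Preissmann property or any classification of the stabilizer of $\{\tilde c(\pm\infty)\}$). The paper avoids the boundary at infinity entirely: it takes a recurrent point $\alpha$ in the generalized local product neighbourhood $W(\theta)$ of Theorem 5.2, and uses the invariance of $\mathcal{F}^{cu}$ together with the local product structure to force a return of $\mathcal{F}^{cu}(\phi_{t_{n}}(\alpha))$ across $\mathcal{F}^{s}(\theta)$, which is both shorter and actually valid under the hypotheses. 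Your second half (foliated rectangle, topological stretching, Lefschetz count, semi-conjugacy to a shift) is in the right spirit and matches Section 8, but it is fed by the two unjustified steps above.
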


The relationship between the topological entropy of the geodesic flow and the
growth of the fundamental group was pointed out by Ma\~{n}\'{e}-Freire [12]: they show
that the topological entropy is equal to the volume growth of balls in the universal
covering, and the volume growth of balls coincides with the growth of the fundamental group (Milnor \cite{kn:Milnor}, Manning \cite{kn:Manning}, Dinaburg  \cite{kn:Dinaburg}). We would like to remark that
the assumption of the existence of a hyperbolic periodic orbit already implies that
the volume growth of balls in the universal covering is super polynomial if $(M,g)$
is a compact, analytic manifold without conjugate points. Because by the work
of Gromov \cite{kn:Gromov}, when the growth of balls is polynomial the fundamental group is
virtually nilpotent; and by Croke-Schroeder \cite{kn:CS}, every virtually nilpotent subgroup
of the fundamental group of a compact, analytic manifold without conjugate points
must be abelian. Therefore, if a compact, analytic $(M,g)$ has no conjugate points
and the growth of balls in the universal covering is polynomial, the manifold is
actually a torus. By Hopf \cite{kn:Hopf}, Burago-Ivanov \cite{kn:BI}, the manifold is flat, and hence
without hyperbolic periodic orbits. Theorem 1.2 improves this remark since super polynomial growth of a discrete, finitely generated group might
not imply exponential growth, that must be the case of the fundamental group of
the manifold when the topological entropy of the geodesic flow is positive. 

\medskip

{\small \emph{Acknowledgements:} The authors would like to thank Gonzalo Contreras for useful discussions and comments. }

	\section{Preliminaries}\label{SECpreliminaries}
	
	Let us start by introducting some basic  notations and objects of the theory of manifolds without conjugate points. We follow mainly \cite{kn:Eberlein,kn:Ruggiero-Ensaios,kn:Ruggiero-AHL}. 
	
	A pair $(M,g)$ denotes a $C^{\infty}$ complete, connected Riemannan manifold, $TM$ will denote its
	tangent space, $T_{1}M$ denotes its unit tangent bundle. $\Pi: TM \longrightarrow M$ denotes the canonical projection $\Pi(p,v) = p$, the coordinates
	$(p,v)$ for $TM$ will be called canonical coordinates. The universal covering of $M$ is $\tilde{M}$, the covering map is denoted by $\pi:\tilde{M} \longrightarrow M$,
	the pullback of the metric $g$ by $\pi$ is denoted by $\tilde{g}$. The geodesic $\gamma_{(p,v)}$ of $(M,g)$ or $(\tilde{M},\tilde{g})$ is the unique geodesic whose initial conditions
	are $\gamma_{(p,v)}(0)=p$, $\gamma_{(p,v)}'(0)= v$. All geodesics will be parametrized by arc length unless explicitly stated. 
	
	The covering map $\pi $ induces a natural covering map $P : T_{1}\tilde{M} \longrightarrow T_{1}M$ given by $ (\tilde{p}, \tilde{v}) \rightarrow (\pi(\tilde{p}), d\pi_{\tilde{p}}(\tilde{v}))$. 
	
	\begin{definition}
		A $C^{\infty}$ Riemannian manifold $(M,g)$ has no conjugate points if the exponential map is nonsingular at every point.
	\end{definition}
	
	Nonpositive curvature manifolds are the best known examples of manifolds without conjugate points, but there are of course many examples of manifolds without conjugate points whose  sectional curvatures change sign (see for instance \cite{kn:Burns}). 
	
	\begin{definition} \label{divergence}
		We say that geodesic rays diverge uniformly, or simply diverge in $(\tilde{M}, \tilde{g})$ if for given $\epsilon >0$, $T>0$, there exists $R>0$ such that for any given $p \in \tilde{M}$
		and two different geodesic rays $\gamma:\mathbb{R} \longrightarrow \tilde{M}$,
		$\beta : \mathbb{R} \longrightarrow \tilde{M}$ with $\gamma(0)= \beta(0) =p$, subtending an angle at $p$ greater than $\epsilon$, then $d(\gamma(t),\beta(t)) \geq T$
		for every $t \geq R$.
	\end{definition}
	
	This condition is quite common in all known examples of manifolds without conjugate points 
	(compact surfaces without conjugate points \cite{kn:Green}, nonpositive curvature manifolds, manifolds without focal points and more generally, manifolds with continuous Green bundles \cite{kn:Ruggiero-TAMS}). However, it is not known whether it is satisfied for every compact manifold without conjugate points and no further assumptions.
	
	\begin{definition} \label{busemann}
		Given $\theta = (p,v) \in T_{1}\tilde{M}$, the Busemann function $b^{\theta} : \tilde{M} \longrightarrow \mathbb{R}$ is given by
		$$ b^{\theta}(x) = \lim_{t\rightarrow +\infty} (d(x,\gamma_{\theta}(t)) -t) .$$
	\end{definition}
	
	Busemann functions of compact manifolds without conjugate points are $C^{1,k}$, namely, $C^{1}$ functions with $k$-Lipschitz first derivatives
	where the constant $k$ depends on the minimum value of the sectional curvatures (see for instance \cite{kn:Pesin}, Section 6).
	The level sets of $b^{\theta}$, $H_{\theta}(t)$, are called \textit{horospheres}, the gradient $\nabla b^{\theta}$ is a Lipschitz unit vector field and
	its flow preserves the foliation of the horospheres $H_{\theta}$ that is a continuous foliation by $C^{1}$, equidistant leaves. The integral orbits of
	the Busemann flow are geodesics of $(\tilde{M},\tilde{g})$ and usually called \textit{Busemann asymptotes} of $\gamma_{\theta}$.  When the curvature is nonpositive, Busemann functions and horospheres are $C^{2}$ smooth \cite{kn:Eschenburg}.
	
	\subsection{Horospheres and center stable/unstable sets for the geodesic flow}
	
	\begin{definition} \label{centerstable}
		For $\theta = (p,v) \in T_{1}\tilde{M}$ let
		$$\tilde{\mathcal{F}}^{s}(\theta) = \{ (x, -\nabla_{x}b^{\theta}), \mbox{ } x \in H_{\theta}(0) \}$$
		$$\tilde{\mathcal{F}}^{u}(\theta) = \{ (x, \nabla_{x}b^{(p,-v)}), \mbox{ } x \in H_{(p,-v)}(0) \}.$$
		$$ \tilde{\mathcal{F}}^{cs}(\theta) = \cup_{t \in \mathbb{R}} \phi_{t}(\tilde{\mathcal{F}}^{s}(\theta))$$ 
		$$ \tilde{\mathcal{F}}^{cu}(\theta) = \cup_{t \in \mathbb{R}} \phi_{t}(\tilde{\mathcal{F}}^{u}(\theta))$$ 
		Similar objects can be defined in $T_{1}M$, let
		$$\mathcal{F}^{s}(P(\theta)) = P(\tilde{\mathcal{F}}^{s}(\theta))$$
		$$\mathcal{F}^{u}(P(\theta)) = P(\tilde{\mathcal{F}}^{u}(\theta)).$$
		$$ \mathcal{F}^{cs}(P(\theta)) = P(\tilde{\mathcal{F}}^{cs}(\theta))$$
		$$ \mathcal{F}^{cu}(P(\theta)) = P(\tilde{\mathcal{F}}^{cu}(\theta))$$
		Let us denote by $\mathcal{F}^{s}$, $\tilde{\mathcal{F}}^{s}$ the collection of the sets $\mathcal{F}^{s}(\theta)$, $\theta \in T_{1}M$, $\tilde{\mathcal{F}}^{s}(\theta)$, $\theta \in T_{1}\tilde{M}$, and by $\mathcal{F}^{u}$, $\tilde{\mathcal{F}}^{u}$ the collection of the sets $\mathcal{F}^{u}(\theta)$, $\tilde{\mathcal{F}}^{u}(\theta)$. Let us denote similarly the collections of sets $\mathcal{F}^{cs}$ $\mathcal{F}^{cu}$, $\tilde{\mathcal{F}}^{cs}$, $\tilde{\mathcal{F}}^{cu}$. 
	\end{definition}
	
	The sets $\mathcal{F}^{s}(\theta)$, $\mathcal{F}^{u}(\theta)$ coincide with the stable and unstable submanifold respectively, of the geodesic flow when it is Anosov (see Section 3 for more details). The sets $\mathcal{F}^{cs}(\theta)$, $\mathcal{F}^{cu}(\theta)$ coincide with the center stable and center unstable submanifolds respectively. In general, it is not know wether such collections of sets are foliations. Some regularity properties of the above sets are described in the following result. 
	
	Consider the vertical fiber $V(\theta) = \{ (p,w), \mbox{ }, w \in T_{p}\tilde{M}, \mbox{ },\parallel w \parallel =1\}\}$

	\begin{theorem} \label{Lipschitz} (Eberlein \cite{kn:Eberlein-73}, Pesin \cite{kn:Pesin})
		
		Let $(M,g)$ be a complete Riemannian manifold without conjugate points such that the sectional curvatures of $(M,g)$ are bounded from below by a constant $c\leq 0$. Then there exists $L>0$, depending on $c$, $\epsilon >0$, and an open neighborhood $B_{\epsilon}(\theta)$ such that  such that each subset $\tilde{\mathcal{F}}^{s}(\theta)$, $\tilde{\mathcal{F}}^{u}(\theta)$ is a $(n-1)$-$L$-Lipschitz continuous submanifold of $T_{1}\tilde{M}$. Moreover, each of these submanifolds is topologically transverse to the vertical fiber $V(\theta) = \{ (p,w), \mbox{ }, w \in T_{p}\tilde{M}, \mbox{ },\parallel w \parallel =1\}\}$, in the following sense: we have that  
		$$\tilde{\mathcal{F}}^{s}(\theta) \cap V(\theta) =\{ \theta \}= \tilde{\mathcal{F}}^{u}(\theta) \cap V(\theta)$$ 
		for every $\theta \in T_{1}\tilde{M}$. 
	\end{theorem}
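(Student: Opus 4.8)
The plan is to establish the two assertions of Theorem \ref{Lipschitz} --- the Lipschitz submanifold structure of $\tilde{\mathcal{F}}^{s}(\theta)$ and $\tilde{\mathcal{F}}^{u}(\theta)$, and their topological transversality to the vertical fiber $V(\theta)$ --- by working with Jacobi fields and the comparison theory that is available under a lower curvature bound. Since $\tilde{\mathcal{F}}^{u}(\theta)$ is the image of $\tilde{\mathcal{F}}^{s}$ under the flip $v\mapsto -v$, it suffices to treat the stable case. First I would recall that the horosphere $H_{\theta}(0)$ is a $C^{1}$ hypersurface in $\tilde{M}$, being a level set of the $C^{1,k}$ Busemann function $b^{\theta}$ whose gradient $\nabla b^{\theta}$ is a $k$-Lipschitz unit vector field (with $k$ governed by the lower curvature bound $c$, as quoted from Pesin). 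Consequently the map $x \mapsto (x, -\nabla_{x} b^{\theta})$ from $H_{\theta}(0)$ into $T_{1}\tilde{M}$ is a homeomorphism onto $\tilde{\mathcal{F}}^{s}(\theta)$ whose ``graph'' character over $H_\theta(0)$ is Lipschitz because the fiber coordinate is a Lipschitz function of the base point; choosing a local chart for $T_1\tilde M$ adapted to the splitting into base and vertical directions exhibits $\tilde{\mathcal{F}}^{s}(\theta)$ as an $(n-1)$-dimensional $L$-Lipschitz submanifold, with $L$ depending only on $c$ and on the local geometry, uniformly on a small neighborhood $B_{\epsilon}(\theta)$ by compactness/continuity of the relevant data.

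For the transversality statement I would argue that $\tilde{\mathcal{F}}^{s}(\theta) \cap V(\theta) = \{\theta\}$. A point of the intersection is a unit vector $(p,w)$ with $p = \gamma_\theta(0)$ (this is what $V(\theta)$ forces on the base point) that also lies in $\tilde{\mathcal{F}}^{s}(\theta)$, i.e. $p \in H_\theta(0)$ and $w = -\nabla_p b^\theta$. But $\gamma_\theta(0)=p$ already lies on $H_\theta(0)$, and $-\nabla_p b^\theta = v$ because the Busemann gradient along its own geodesic is the velocity field; hence $(p,w) = (p,v) = \theta$. The word ``topologically transverse'' here is exactly the assertion that these two $(n-1)$- and $n$-dimensional subsets of the $(2n-1)$-dimensional manifold $T_1\tilde M$ meet in a single point, which is the natural transversality substitute when one does not have $C^1$ regularity to intersect tangent spaces.

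The main obstacle --- and the part that genuinely uses ``no conjugate points'' rather than just the lower curvature bound --- is the uniform Lipschitz constant, i.e. controlling how fast $\nabla b^\theta$ varies. The key tool is that stable Jacobi fields along $\gamma_\theta$ (the limits of Jacobi fields vanishing at $\gamma_\theta(t)$ as $t\to+\infty$, which exist precisely because there are no conjugate points) are bounded, and the second fundamental form of the horosphere, which is the Riccati solution associated to these stable Jacobi fields, is bounded above and below in terms of $c$; this is the content of the Eberlein--Pesin estimates. From this one gets that the normal field $\nabla b^\theta$ to the horosphere has derivative controlled by this bounded shape operator, yielding the Lipschitz bound $L = L(c)$. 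I would carry this out by: (i) recalling the Riccati comparison giving two-sided bounds on the stable solution $U^s$; (ii) translating the bound on $U^s$ into a bound on the differential of $x\mapsto \nabla_x b^\theta$ along $H_\theta(0)$, hence a Lipschitz estimate for the graph map; (iii) transporting the estimate to a fixed-size neighborhood $B_\epsilon(\theta)$ using continuity of the horospherical data in $\theta$ (continuity of Busemann functions in the $C^1$ topology on compact sets). The transversality conclusion then requires no extra work beyond the identification above, so the heart of the proof is step (i)--(ii), the Riccati/Jacobi field estimate.
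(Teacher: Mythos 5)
The paper does not prove this statement at all: it is quoted as a known result of Eberlein and Pesin, so there is no internal proof to compare against. Your outline is a correct reconstruction of the standard argument from those references, and it is consistent with the Riccati discussion the paper itself gives in Section 4 (bounding $\|U_{T}(0)\|$ by a constant $L$ depending on the curvature lower bound). The two key points are handled properly: the leaf $\tilde{\mathcal{F}}^{s}(\theta)$ is by definition the graph of $x\mapsto -\nabla_{x}b^{\theta}$ over the horosphere $H_{\theta}(0)$, so the Lipschitz submanifold structure reduces to the $k$-Lipschitz regularity of $\nabla b^{\theta}$, which in turn comes from the two-sided Riccati bound on the stable second fundamental form (this is where both the absence of conjugate points and the lower curvature bound enter); and the intersection $\tilde{\mathcal{F}}^{s}(\theta)\cap V(\theta)=\{\theta\}$ is immediate from the graph description, since the only point of the graph lying over $p$ is $(p,-\nabla_{p}b^{\theta})=(p,v)=\theta$. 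No gap.
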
 
	
	We shall come back later (Section 4) to explore this sort of "topological transversality" between invariant submanifolds and vertical fibers to construct some special open foliated neighborhoods for the dynamics.

	\subsection{Divergence of geodesic rays and uniform continuity of large spheres and horospheres}
	
	We review in the subsection some relevant results linking the uniform divergence of geodesic rays in the universal covering of a compact manifold without conjugate points and the uniform behavior of the global geometry of large spheres and horospheres. We follow \cite{kn:Ruggiero-Ast}. Let 
$$ S_{T}(p) = \{ x \in \tilde{M}, \mbox{ }, d(x,p) = T \} $$ 
be the sphere of radius $T$ around $p \in \tilde{M}$, and let 
$$ B_{T}(p) = \{ x \in \tilde{M}, \mbox{ }, d(x,p) \leq T \} $$  
be the closed ball of radius $T$ around $p \in \tilde{M}$. 
	
	\begin{theorem} \label{h-continuity}
		Let $(M,g)$ be a compact Riemannian manifold without conjugate  points such that geodesic rays diverge in $(\tilde{M}, \tilde{g})$. Then given $\epsilon >0$ and $R>0$, there exists $T>0$ such that for every $\theta = (p,v) \in T_{1}\tilde{M}$, the $C^{1}$ distance from the sets $S_{T}(\gamma_{\theta}(T)) \cap B_{R}(p)$ and $H_{\theta}(0) \cap B_{R}(p)$ is less than $\epsilon$. Moreover, there exists $\delta >0$ such that if $d(\theta, \eta) < \delta$ then the $C^{1}$ distance 
		from $H_{\theta}(0)\cap B_{R}(p)$ and $H_{\eta}(0)\cap B_{R}(p)$ is less than $\epsilon$. 
	\end{theorem}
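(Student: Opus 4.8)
The plan is to deduce both assertions from the uniform divergence hypothesis by comparing geodesic spheres of large radius with horospheres, using the fact that on a compact manifold without conjugate points the Busemann functions are $C^{1,k}$ with a uniform Lipschitz constant for their gradients. First I would fix $\epsilon>0$ and $R>0$, and recall that for $\theta=(p,v)\in T_1\tilde M$ the Busemann function $b^\theta$ is, by definition, the limit $b^\theta(x)=\lim_{t\to+\infty}(d(x,\gamma_\theta(t))-t)$, and that the function $x\mapsto d(x,\gamma_\theta(T))-T$ is, away from $\gamma_\theta(T)$, a smooth function whose level set through $p$ is exactly $S_T(\gamma_\theta(T))$ and whose gradient at $x$ is the unit vector pointing away from $\gamma_\theta(T)$ along the minimizing geodesic. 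So the content of the first claim is a $C^1$ (i.e. value-plus-gradient) convergence, uniform in $\theta$ and uniform on $B_R(p)$, of the finite-radius distance functions to the Busemann function.

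The key step is to upgrade the pointwise convergence $d(x,\gamma_\theta(t))-t\to b^\theta(x)$ to uniform convergence of the gradients on the compact set $B_R(p)$, uniformly over $\theta$. Here is where divergence of geodesic rays enters: the gradient of $x\mapsto d(x,\gamma_\theta(T))$ at a point $x\in B_R(p)$ is the initial velocity of the minimizing geodesic from $x$ to $\gamma_\theta(T)$, while $-\nabla_x b^\theta$ is the initial velocity of the Busemann asymptote to $\gamma_\theta$ through $x$. If these two unit vectors subtended an angle bounded below by some $\alpha>0$ for a sequence $T\to\infty$, then the two geodesic rays they determine would, by uniform divergence (Definition \ref{divergence}), be at distance at least any prescribed amount for all large time; but both rays must stay within bounded distance of each other, since the Busemann asymptote is precisely the limit of the minimizing segments to $\gamma_\theta(t)$ and all these segments pass near $\gamma_\theta(T)$ — a contradiction. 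Making this quantitative, and uniform in $\theta\in T_1\tilde M$ (using compactness of $M$ to transfer the estimate to $\tilde M$ equivariantly, together with the uniform Lipschitz bound on $\nabla b^\theta$ from the $C^{1,k}$ regularity so that the level sets $S_T(\gamma_\theta(T))\cap B_R(p)$ and $H_\theta(0)\cap B_R(p)$ are uniformly graphs over a common hyperplane), gives the first statement: there is $T=T(\epsilon,R)$ with the $C^1$ distance between $S_T(\gamma_\theta(T))\cap B_R(p)$ and $H_\theta(0)\cap B_R(p)$ less than $\epsilon/2$.

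For the second (continuity in $\theta$) statement I would argue as follows. Fix the $T$ just produced for $\epsilon/2$ and a slightly larger radius $R'>R$. The map $\eta\mapsto S_T(\gamma_\eta(T))\cap B_{R'}(p)$ is continuous in the $C^1$ topology simply because $\eta\mapsto\gamma_\eta(T)$ is continuous and the distance function to a moving point depends continuously (in $C^1$, away from that point) on the point; so there is $\delta>0$ such that $d(\theta,\eta)<\delta$ forces the $C^1$ distance between $S_T(\gamma_\theta(T))\cap B_R(p)$ and $S_T(\gamma_\eta(T))\cap B_R(p)$ to be less than $\epsilon/2$. Combining this with the first statement applied to both $\theta$ and $\eta$, the triangle inequality in the $C^1$ metric gives that $H_\theta(0)\cap B_R(p)$ and $H_\eta(0)\cap B_R(p)$ are within $\epsilon$, as desired. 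I expect the main obstacle to be the uniformity in $\theta$ of the angle estimate in the key step: one must rule out, uniformly over all $\theta$ and all $x\in B_R(p)$, that the minimizing direction to $\gamma_\theta(T)$ and the Busemann asymptotic direction at $x$ stay far apart as $T\to\infty$, and then convert an angle bound into a $C^1$-graph bound with constants depending only on $\epsilon$, $R$, and the lower curvature bound (via the Lipschitz constant $k$ of the Busemann gradients) — this is exactly the place where the hypothesis that geodesic rays diverge is indispensable, and where a naive pointwise argument is not enough.
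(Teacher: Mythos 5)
The paper does not actually prove Theorem \ref{h-continuity}; it is quoted from \cite{kn:Ruggiero-Ast}, so there is no internal proof to compare against. Judged on its own terms, your proposal gets the outer architecture right: the reduction of the second assertion to the first (continuity of $\eta \mapsto S_T(\gamma_\eta(T))$ for fixed $T$, plus a triangle inequality with a slightly enlarged radius) is correct, and so is the use of the uniform Riccati bounds on the second fundamental forms of large spheres to convert $C^0$ closeness of the level sets into $C^1$ closeness. The problem is concentrated in the ``key step.''

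That step rests on two claims: (i) the Busemann asymptote through $x$ is the limit of the minimizing segments $[x,\gamma_\theta(t)]$, and (ii) all these segments pass near $\gamma_\theta(T)$, so that the minimizing ray to $\gamma_\theta(T)$ and the Busemann ray stay a bounded distance apart. Claim (ii) is unjustified under the hypotheses of this theorem: it is a thin-triangle/quasi-convexity property, and quasi-convexity is deliberately \emph{not} assumed here (compare Lemma \ref{asymptotic-1}(3), where it is a separate hypothesis); the paper itself emphasizes that without such assumptions a Busemann asymptote of $\gamma_\theta$ need not even be asymptotic to $\gamma_\theta$. Claim (i), for a \emph{fixed} pair $(x,\theta)$, is exactly the pointwise version of the convergence being proven (it follows from Dini's theorem plus the Riccati bounds, with no use of divergence), so invoking it inside the contradiction argument yields nothing beyond pointwise convergence. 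To refute a ``bad'' sequence $\theta_n \to \theta$, $x_n \to x$, $T_n \to \infty$ with angle bounded below, you would need to know that $-\nabla_{x_n}b^{\theta_n} \to -\nabla_x b^{\theta}$ and that the limit ray of $[x_n,\gamma_{\theta_n}(T_n)]$ is the Busemann ray of the limit $\theta$ --- and the first of these is precisely the continuity in $\theta$ that constitutes the second half of the theorem. So the argument is circular at the decisive point. What is missing, and what the quoted reference supplies, is an argument identifying every $C^1_{loc}$ subsequential limit $f$ of the functions $b^{\theta_n}_{T_n}$ (which exists by Arzel\`a--Ascoli and satisfies $f \le b^{\theta}$) with $b^{\theta}$ itself, using that the integral curves of $-\nabla f$ are limits of minimizing segments and that divergence of geodesic rays forbids two distinct such rays issuing from the same point; it is there, and only there, that the divergence hypothesis does genuine work.
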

	
	Let $b^{\theta}_{T}: \tilde{M} \longrightarrow \mathbb{R}$ be defined by 
	$b^{\theta}_{T} (x) = d(x, \gamma_{\theta}(T)) - T$. By definition, the Busemann function $b^{\theta}(x)$ is the limit as $T \rightarrow +\infty$ of $b^{\theta}_{T}(x)$. The level sets of $b^{\theta}_{T}(x)$ are the spheres $S_{T}(\gamma_{\theta}(T))$. So Theorem \ref{h-continuity} implies that the functions $b^{\theta}_{T}$ converge to the functions $b^{\theta}$ uniformly on compact subsets. Since spheres in $\tilde{M}$ are $C^{\infty}$ submanifolds, the set 
	$$\tilde{\mathcal{F}}^{s}_{T}(\theta) = \{ (x, -\nabla_{x}b^{\theta}_{T}), \mbox{ }x \in S_{T}(\gamma_{\theta}(T))\} $$ 
	is a $C^{\infty}$ submanifold of $T_{1}\tilde{M}$. As a consequence of Theorem \ref{h-continuity} and Theorem \ref{Lipschitz} we have 
	
	\begin{corollary} \label{fol-continuity}
		Under the assumptions of Theorem \ref{h-continuity}, the submanifolds $\tilde{\mathcal{F}}^{s}_{T}(\theta)$ converge uniformly in the $C^{0}$ topology on compact sets to $\tilde{\mathcal{F}}^{s}(\theta)$. Moreover, the sets $\tilde{\mathcal{F}}^{s}(\theta)$ form a continuous foliation $\tilde{\mathcal{F}}^{s}$ of $T_{1}\tilde{M}$ by $n$-dimensional leaves, as well as the sets $\tilde{\mathcal{F}}^{u}(\theta)$ which form a continuous foliation $\tilde{\mathcal{F}}^{u}$. In particular, the collection of sets $\mathcal{F}^{s}(\theta)$, $\mathcal{F}^{u}(\theta)$  with $\theta \in T_{1}M$ for continuous foliations $\mathcal{F}^{s}$, $\mathcal{F}^{u}$ by Lipschitz leaves. 
	\end{corollary}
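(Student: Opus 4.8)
The plan is to bootstrap the $C^0$-convergence of the approximating submanifolds $\tilde{\mathcal{F}}^s_T(\theta)$ into a genuine continuity statement for the limiting collection $\tilde{\mathcal{F}}^s$, and then to upgrade continuity to a foliation structure. First I would fix a compact set $K\en T_1\tilde{M}$ and use Theorem \ref{h-continuity} to see that the $C^1$-distance between the sphere $S_T(\gamma_\theta(T))\cap B_R(p)$ and the horosphere $H_\theta(0)\cap B_R(p)$ tends to $0$ as $T\to\infty$, uniformly in $\theta$; since the Gauss maps $x\mapsto -\nabla_x b^\theta_T$ and $x\mapsto -\nabla_x b^\theta$ are built from the first derivatives of these level sets, $C^1$-closeness of the level sets translates into $C^0$-closeness of the lifted submanifolds $\tilde{\mathcal{F}}^s_T(\theta)$ and $\tilde{\mathcal{F}}^s(\theta)$ inside $T_1\tilde{M}$. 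This gives the first assertion. The mild technical point here is that Theorem \ref{h-continuity} is stated for intersections with a metric ball $B_R(p)$ in the base, so one has to check that the portion of $\tilde{\mathcal{F}}^s_T(\theta)$ lying over $B_R(p)$ exhausts, as $R\to\infty$, a neighborhood of $\theta$ of uniform size in $T_1\tilde{M}$; this follows from the uniform Lipschitz graph property in Theorem \ref{Lipschitz}, which bounds how far a leaf can wander vertically as one moves a bounded distance in the base.

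Next I would establish that the sets $\tilde{\mathcal{F}}^s(\theta)$ genuinely partition $T_1\tilde{M}$ and vary continuously. The partition property is essentially the statement that Busemann asymptote relation is an equivalence relation on $T_1\tilde{M}$: two unit vectors lie on the same $\tilde{\mathcal{F}}^s$-set iff their forward geodesics are positively asymptotic and the Busemann gradients agree, and transitivity of this relation under the divergence hypothesis is exactly what rules out two distinct ``stable leaves'' crossing. For continuity of the foliation I would combine two inputs: the second half of Theorem \ref{h-continuity}, which gives $d(\theta,\eta)<\delta \Rightarrow$ the horospheres $H_\theta(0)$ and $H_\eta(0)$ are $C^1$-close on $B_R(p)$, hence $\tilde{\mathcal{F}}^s(\theta)$ and $\tilde{\mathcal{F}}^s(\eta)$ are $C^0$-close; and the uniform Lipschitz constant $L$ from Theorem \ref{Lipschitz}, which guarantees each leaf is a uniform Lipschitz graph over the horosphere direction and transverse to the vertical fiber $V(\theta)$ with $\tilde{\mathcal{F}}^s(\theta)\cap V(\theta)=\{\theta\}$. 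A uniformly Lipschitz family of local graphs that is $C^0$-continuous in the base point and whose leaves are pairwise disjoint is, by the standard local-chart argument, a continuous foliation: one writes a foliated chart by using $V(\theta)$ (or a small disk in it) as a transversal and the leaves as the plaques, the transversality being precisely the content of Theorem \ref{Lipschitz}. The dimension count is immediate, $\dim \tilde{\mathcal{F}}^s(\theta)=\dim H_\theta(0)=n-1$ plus nothing for $\tilde{\mathcal{F}}^s$, but note the statement claims $n$-dimensional leaves, which is the center-stable count $\tilde{\mathcal{F}}^{cs}$; I would double check whether the intended object here is $\tilde{\mathcal{F}}^s$ of dimension $n-1$ or its saturation under the flow of dimension $n$, and phrase the chart accordingly.

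Finally, to transfer everything to $T_1M$ I would use that $P:T_1\tilde{M}\to T_1M$ is a covering map that is a local isometry for the Sasaki metric, and that each deck transformation maps Busemann functions to Busemann functions, hence permutes the $\tilde{\mathcal{F}}^s$-sets; therefore the partition $\tilde{\mathcal{F}}^s$ descends to a partition $\mathcal{F}^s$ of $T_1M$, the local foliated charts push forward since $P$ is a local homeomorphism, and the Lipschitz-graph property is preserved because $P$ is locally bi-Lipschitz. The leaves $\mathcal{F}^s(\theta)$ inherit the Lipschitz regularity from $\tilde{\mathcal{F}}^s(\theta)$. The same argument applied to $(p,-v)$ gives $\tilde{\mathcal{F}}^u$ and $\mathcal{F}^u$ verbatim, since $\tilde{\mathcal{F}}^u(\theta)=-\iota(\tilde{\mathcal{F}}^s(-\theta))$ where $\iota$ is the flip, or directly by replacing forward-asymptotic with backward-asymptotic and $b^\theta$ with $b^{(p,-v)}$. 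I expect the main obstacle to be the passage from ``each leaf is a $C^0$-limit of $C^\infty$ graphs and leaves are disjoint'' to ``the collection is a foliation'', i.e.\ producing honest foliated charts: the delicate point is that $C^0$-convergence of the sphere-submanifolds does not a priori prevent the limiting leaves from being tangent to or accumulating on one another, and it is exactly the uniform transversality to the vertical fiber in Theorem \ref{Lipschitz} — together with the fact that geodesic rays diverge, which forces the asymptote relation to be a true equivalence relation — that excludes this pathology and lets one build the chart with $V(\theta)$ as transversal.
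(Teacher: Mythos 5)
Your proposal is correct and follows essentially the same route as the paper, which states the corollary as a direct consequence of Theorem \ref{h-continuity} (uniform $C^1$-convergence of spheres to horospheres and continuity in the footpoint) and Theorem \ref{Lipschitz} (uniform Lipschitz graphs transverse to the vertical fibers); your foliated-chart construction with $V(\theta)$ as transversal is exactly what the paper carries out later in Lemmas \ref{vertical-neigh} and \ref{Fol-neigh}. Your observation about the dimension is also well taken: by Theorem \ref{Lipschitz} the leaves $\tilde{\mathcal{F}}^{s}(\theta)$ are $(n-1)$-dimensional, and the ``$n$-dimensional'' in the statement is a slip (it is the center-stable saturations $\tilde{\mathcal{F}}^{cs}$ that have dimension $n$).
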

	
	\subsection{Quasi-convexity} 
	
	\begin{definition} \label{quasi-convexity}
		The universal covering $(\tilde{M}, \tilde{g})$ of a complete Riemannian manifold $(M,g)$ is called a $(K,C)$-quasi-convex space, or simply a quasi-convex space, if there exist constants
		$K>0$, $C>0$, such that given two pairs of points $x_{1}, x_{2}$, $y_{1},y_{2}$ in $\tilde{M}$ and two minimizing geodesics $\gamma:[0,1] \longrightarrow \tilde{M}$,
		$\beta:[0,1] \longrightarrow \tilde{M}$ such that $\gamma(0)= x_{1}$, $\gamma(1)= y_{1}$, $\beta(0) = x_{2}$, $\beta(1)= y_{2}$, we have
		$$ d_{H}(\gamma, \beta) \leq K\max\{ d(x_{1},x_{2}), d(y_{1},y_{2})\} + C$$
		where $d_{H}$ is the Hausdorff distance.
	\end{definition}
	
	The universal covering of manifold of nonpositive sectional curvature is $(1,0)$-quasi-convex. Most of the known categories of manifolds without conjugate points (no focal points, bounded asymptote)
	have quasi-convex universal coverings. Moreover, by the work of Morse (\cite{kn:Morse}), when $(M,g)$ is a compact surface without conjugate points and genus greater than one, the universal covering $(\tilde{M}, \tilde{g})$ is a quasi-convex space. In higher dimensions, Gromov \cite{kn:Gromov} shows that the universal covering of every compact manifold whose fundamental group is hyperbolic is quasi-convex.
	
	\subsection{Busemann asymptoticity versus asymptoticity and the generalized flat strip Theorem}
	
	\begin{definition}
		A geodesic $\beta : \mathbb{R} \longrightarrow \tilde{M} $ is forward asymptotic to a geodesic $\gamma :\mathbb{R} \longrightarrow \tilde{M} $ if there exists $L>0$ such that
		$$d_{H}(\gamma[0,+\infty), \beta[0,+\infty)) \leq L$$
		where $\gamma[0,+\infty) = \{ \gamma(t), t \geq 0\}$. A geodesic $\sigma \subset \tilde{M}$ is backward asymptotic to $\gamma$ if there exists $L>0$ such that
		$$d_{H}(\gamma(-\infty,0], \beta(-\infty,0]) \leq L.$$
		Two geodesics $\gamma$, $\beta$ in $\tilde{M}$ are bi-asymptotic if they are both forward and backward asymptotic.
	\end{definition}
	
	In the case of manifolds of nonpositive curvature or no focal points, every Busemann asymptote of a geodesic $\gamma_{\theta} \subset \tilde{M}$ is forward asymptotic to $\gamma_{\theta}$. Without these assumptions, a Busemann asymptote of $\gamma_{\theta}$ might not be asymptotic to $\gamma_{\theta}$. The most general result about the subject is perhaps the following statement (see \cite{kn:Ruggiero-Ast}, or \cite[Lemma 2.9]{kn:Ruggiero-AHL}  for instance) 
	
	\begin{lemma} \label{asymptotic-1}
		Let $(M,g)$ be compact without conjugate points such that geodesic rays diverge in $\tilde{M}$. Then
		\begin{enumerate}
			\item Any geodesic $\gamma_{\eta}$ forward asymptotic to $\gamma_{\theta}$ is a Busemann asymptote of $\gamma_{\theta}$.
			Moreover, there exists $c>0$ such that $b^{\eta} (x) = b^{\theta}(x) +c$ for every $ x \in \tilde{M}$.
			\item In particular, any geodesic that is bi-asymptotic to $\gamma_{\theta}$ is a Busemann asymptote of
			$\gamma_{\theta}$ and $\gamma_{-\theta}$ where $\theta = (p,v)$, $-\theta= (p,-v)$. In this case, if $\gamma_{\eta}(0) \in H_{\theta}(0)$ then
			$\gamma_{\eta}(0) \in H_{\theta}(0)\cap H_{-\theta}(0)$
			\item Moreover, if $(\tilde{M}, \tilde{g})$ is $K,C$-quasi-convex, for every $\theta \in T_{1}\tilde{M}$ and $\eta \in \tilde{\mathcal{F}}^{s}(\theta)$, the geodesics $\gamma_{\theta}$ and $\gamma_{\eta}$ are forward asymptotic and satisfy
			$$ d(\gamma_{\theta}(t), \gamma_{\eta}(t)) \leq Kd(\gamma_{\theta}(0), \gamma_{\eta}(0)) + C $$
			for every $t \geq 0$. The same estimate holds for $\gamma_{\theta}$ and $\gamma_{\eta}$ if $\eta \in \tilde{\mathcal{F}}^{u}(\theta)$ for every $ t \leq 0$. 
		\end{enumerate}
	\end{lemma}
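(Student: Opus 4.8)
The statement to prove is Lemma~\ref{asymptotic-1}, which has three parts. Here is how I would organize the argument.

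\medbreak

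\noindent\emph{Proof plan for Lemma~\ref{asymptotic-1}.}
The plan is to treat part (1) first, since (2) is essentially a corollary of it and (3) combines (1) with the quasi-convexity hypothesis. For part (1), I would start with a geodesic $\gamma_\eta$ that is forward asymptotic to $\gamma_\theta$, so that $d_H(\gamma_\theta[0,+\infty),\gamma_\eta[0,+\infty))\le L$ for some $L>0$. The first step is to compare $\gamma_\eta$ with the Busemann asymptote of $\gamma_\theta$ issuing from $\gamma_\eta(0)$; call it $\sigma$. One shows that $\sigma$ is also forward asymptotic to $\gamma_\theta$ (Busemann asymptotes of $\gamma_\theta$ stay within bounded Hausdorff distance of $\gamma_\theta$ because horospheres are equidistant and the Busemann flow moves points along the horospherical foliation), so $\gamma_\eta$ and $\sigma$ are two geodesics through the same point $\gamma_\eta(0)$ that remain a bounded distance apart on $[0,+\infty)$. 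Now invoke the divergence of geodesic rays (Definition~\ref{divergence}): two distinct rays from a common point subtending a positive angle must eventually be arbitrarily far apart. Since $d(\gamma_\eta(t),\sigma(t))$ stays bounded, the angle between $\gamma_\eta'(0)$ and $\sigma'(0)$ must be zero, hence $\gamma_\eta=\sigma$; that is, $\gamma_\eta$ is the Busemann asymptote of $\gamma_\theta$ through its own initial point. For the constant $c$: since $\gamma_\eta$ is a Busemann asymptote of $\gamma_\theta$, the function $x\mapsto b^\eta(x)-b^\theta(x)$ is constant along the integral curves of $\nabla b^\theta$ (the gradient flow of $b^\theta$ preserves the horospheres and $b^\eta$ differs from $b^\theta$ by its value on each horosphere), and by $C^1$-dependence and connectedness of $\tilde M$ this difference is globally a constant $c$; one reads off $c$ by evaluating at $\gamma_\eta(0)$.

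\medbreak

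\noindent For part (2), suppose $\gamma_\eta$ is bi-asymptotic to $\gamma_\theta$. Applying part (1) in forward time gives that $\gamma_\eta$ is a Busemann asymptote of $\gamma_\theta$; applying part (1) in backward time to the reversed geodesics gives that $\gamma_{-\eta}$ is a Busemann asymptote of $\gamma_{-\theta}$, equivalently $\gamma_\eta$ is a backward Busemann asymptote of $\gamma_\theta$, i.e.\ a Busemann asymptote of $\gamma_{-\theta}$. If moreover $\gamma_\eta(0)\in H_\theta(0)$, then by part (1) the constant $c$ relating $b^\eta$ and $b^\theta$ is $b^\theta(\gamma_\eta(0))=0$, so $\gamma_\eta(0)$ lies on $H_\theta(0)$; the same argument with $-\theta$ places $\gamma_\eta(0)$ on $H_{-\theta}(0)$, giving $\gamma_\eta(0)\in H_\theta(0)\cap H_{-\theta}(0)$. (One small point to check: bi-asymptoticity is symmetric, so that $\gamma_\theta(0)$ lies on the corresponding horospheres of $\eta$ as well, which is what makes the intersection statement meaningful.)

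\medbreak

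\noindent For part (3), let $\eta\in\tilde{\mathcal F}^s(\theta)$, so $\gamma_\eta$ is the Busemann asymptote of $\gamma_\theta$ with $\gamma_\eta(0)\in H_\theta(0)$. The point is to upgrade "forward asymptotic" to the explicit linear bound $d(\gamma_\theta(t),\gamma_\eta(t))\le K\,d(\gamma_\theta(0),\gamma_\eta(0))+C$. Here I would apply the quasi-convexity inequality of Definition~\ref{quasi-convexity} to the minimizing segments $\gamma_\theta|[0,s]$ and $\gamma_\eta|[0,s]$ for large $s$: these have endpoints $\gamma_\theta(0),\gamma_\eta(0)$ and $\gamma_\theta(s),\gamma_\eta(s)$, and quasi-convexity bounds the Hausdorff distance between the two segments by $K\max\{d(\gamma_\theta(0),\gamma_\eta(0)),d(\gamma_\theta(s),\gamma_\eta(s))\}+C$. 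Because both rays are forward asymptotic (from part (1)), $d(\gamma_\theta(s),\gamma_\eta(s))$ stays bounded as $s\to\infty$; combined with the fact that (by asymptoticity and the Busemann structure) the two rays are "synchronized," i.e.\ $\gamma_\eta(t)$ is the closest point of $\gamma_\eta$ to the horosphere $H_\theta(t)$, one deduces that in fact the relevant maximum is realized by the $t=0$ term uniformly. More carefully, one fixes $t$, runs quasi-convexity on $[0,s]$ with $s\gg t$, uses that the point $\gamma_\theta(t)$ is within the Hausdorff distance of the $\beta$-segment hence within $K d(\gamma_\theta(0),\gamma_\eta(0))+C$ of some $\gamma_\eta(t')$, and then uses the equidistance of horospheres (Theorem~\ref{h-continuity} and the properties of Busemann functions recalled in Section~\ref{SECpreliminaries}) to replace $\gamma_\eta(t')$ by $\gamma_\eta(t)$ at no extra cost. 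The case $\eta\in\tilde{\mathcal F}^u(\theta)$ for $t\le 0$ follows by applying the forward statement to $-\theta,-\eta$.

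\medbreak

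\noindent\emph{Main obstacle.} The genuinely delicate step is part (3): getting the \emph{clean} linear estimate with the $t=0$ data, rather than just boundedness of $d(\gamma_\theta(t),\gamma_\eta(t))$. Quasi-convexity only directly compares finite minimizing segments and only gives a Hausdorff bound, so one has to argue that the asymptotic rays are "parametrized in phase" — that the nearest point on $\gamma_\eta$ to $\gamma_\theta(t)$ is $\gamma_\eta(t)$ up to a bounded shift — which is exactly where the Busemann-asymptote identification from part (1), the equidistance of horospheres, and a passage to the limit $s\to\infty$ all have to be combined. Parts (1) and (2) are comparatively routine once the divergence hypothesis is in hand: the only thing to be careful about there is justifying that a Busemann asymptote of $\gamma_\theta$ really does stay at bounded Hausdorff distance from $\gamma_\theta$ (so that the divergence property can be applied), which uses compactness of $M$ together with the Lipschitz equidistant structure of the horospherical foliation.
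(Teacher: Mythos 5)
First, note that the paper itself does not prove this lemma; it is imported from \cite{kn:Ruggiero-Ast} and \cite[Lemma 2.9]{kn:Ruggiero-AHL}, so your proposal can only be measured against the standard argument there. Your plan has the right overall shape (divergence of rays forces uniqueness of the asymptote in (1), quasi-convexity gives the quantitative bound in (3)), but it contains two genuine logical gaps, both stemming from the same confusion between ``Busemann asymptote'' and ``forward asymptotic''. In part (1) you assert that the Busemann asymptote $\sigma$ of $\gamma_{\theta}$ issuing from $\gamma_{\eta}(0)$ is itself forward asymptotic to $\gamma_{\theta}$, justified by equidistance of horospheres. This is false in general, and the paper says so explicitly two paragraphs before the lemma: ``a Busemann asymptote of $\gamma_{\theta}$ might not be asymptotic to $\gamma_{\theta}$.'' Equidistance only controls the value $b^{\theta}(\sigma(t))$, i.e.\ which horosphere $\sigma(t)$ lies on, not its distance to $\gamma_{\theta}$ inside that horosphere, which can a priori grow without bound. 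Without this step you cannot conclude that $\gamma_{\eta}$ and $\sigma$ stay a bounded distance apart, and the divergence argument collapses. The repair is to compare $\gamma_{\eta}$ not with the limit object $\sigma$ but with the finite segments $c_{T}$ from $\gamma_{\eta}(0)$ to $\gamma_{\theta}(T)$ (whose initial directions converge to $\sigma'(0)$ by definition of the Busemann asymptote): if the angle at $\gamma_{\eta}(0)$ between $c_{T}$ and $\gamma_{\eta}$ stayed bounded below, divergence of rays would make $d(c_{T}(t),\gamma_{\eta}(t))\geq 2L+1$ for $t\geq R$, contradicting that the endpoint $\gamma_{\theta}(T)$ of $c_{T}$ lies within $L$ of $\gamma_{\eta}$ at a comparable parameter value.

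The same confusion reappears, more damagingly, in part (3): you write ``Because both rays are forward asymptotic (from part (1)), $d(\gamma_{\theta}(s),\gamma_{\eta}(s))$ stays bounded.'' Part (1) is the implication (forward asymptotic) $\Rightarrow$ (Busemann asymptote); membership $\eta\in\tilde{\mathcal{F}}^{s}(\theta)$ only tells you $\gamma_{\eta}$ is a Busemann asymptote, and the forward asymptoticity is part of the \emph{conclusion} of (3) --- it is exactly what quasi-convexity is needed for. As written, your argument is circular, and applying quasi-convexity to the pair $\gamma_{\theta}|_{[0,s]}$, $\gamma_{\eta}|_{[0,s]}$ is useless because you have no a priori control on $d(\gamma_{\theta}(s),\gamma_{\eta}(s))$. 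The correct route is again through the approximating segments: apply quasi-convexity to $\gamma_{\theta}|_{[0,T]}$ and $c_{T}$, which share the endpoint $\gamma_{\theta}(T)$, to get $d_{H}(\gamma_{\theta}|_{[0,T]},c_{T})\leq K\,d(\gamma_{\theta}(0),\gamma_{\eta}(0))+C$ uniformly in $T$, and then let $T\to\infty$ so that $c_{T}\to\gamma_{\eta}$ uniformly on compact sets; your idea of synchronizing parameters via the level sets of $b^{\theta}$ then finishes the estimate. Two smaller points: in part (2), deducing $b^{-\theta}(\gamma_{\eta}(0))=0$ from $b^{\theta}(\gamma_{\eta}(0))=0$ is not just ``the same argument with $-\theta$''; it needs the inequality $b^{\theta}+b^{-\theta}\geq 0$ together with its vanishing along $\gamma_{\theta}$ and $\gamma_{\eta}$ to pin down the second additive constant. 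And in (1), the constancy of $b^{\eta}-b^{\theta}$ is cleanest via the uniform limits $b^{\theta}_{T}\to b^{\theta}$ rather than via the gradient flow.
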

	
	The relationshing between the intersections $H_{\theta}(0)\cap H_{-\theta}(0)$ and bi-asymptotic geodesics  is one of the most intriguing problems in
	the theory of manifolds without conjugate points. In the case of compact surfaces such a set is a connected compact curve with boundary (that might be a single point of course).
	The convexity properties of spaces of nonpositive curvature yield that such a set is a convex flat set (see for instance \cite{kn:BGS})
	which generates a flat invariant set under the action of the geodesic flow. This result is a consequence of the well known flat strip Theorem, in the case of surfaces of higher genus 
	they are true strips, namely, isometric embeddings of $\mathbb{R}\times [a,b]$ endowed with the Euclidean metric. 
	
	Geodesics of simply connected (non-compact) manifolds with negative curvature do not have nontrivial strips, each strip is a single geodesic. This feature is equivalent to the notion of expansivity (see \cite{kn:Ruggiero-expansive} for definition and details). Expansivity has strong consequences, a compact manifold without conjugate points and expansive geodesic flow has Gromov fundamental group and its universal covering is a visibility manifold \cite{kn:Ruggiero-SBM}. 
	
	Without restrictions on the sign of sectional curvatures of a compact surface without conjugate points, the intersections between invariant submanifolds
	$\tilde{W}^{s}(\theta) \cap \tilde{W}^{u}(\theta)$ might be non-flat strips as shown by Burns (see \cite{kn:Burns}). In higher dimensions, it is very difficult to expect 
	any control over the geometry of  $H_{\theta}(0)\cap H_{-\theta}(0)$. 
	Such intersections might be even disconnected sets. However,  we have the following recent extension of the flat strip Theorem \cite{kn:Ruggiero-AHL}: 
	
	\begin{lemma} \label{strip}
		Let $(M,g)$ be a compact $C^{\infty}$ manifold without conjugate points such that $(\tilde{M}, \tilde{g})$ is $(K,C)$-quasi-convex where geodesic rays diverge.
		Then given $\theta = (p,v) \in T_{1}\tilde{M}$, and a geodesic $\beta=\gamma_{\eta}$ (with $\eta  \in T_{1}\tilde{M}$) bi-asymptotic to $\gamma = \gamma_{\theta}$, there exists a connected set $\Sigma(\gamma, \beta) \subset H_{\theta}(0) \cap H_{-\theta}(0)$ containing
		$p$ and $\beta \cap H_{\theta}(0)$, such that 
		\begin{enumerate}
			\item for every $x \in \Sigma(\gamma, \beta)$, the geodesic with initial conditions $(x, -\nabla_{x} b^{\theta})$ is bi-asymptotic to both of them.
			In particular, the set
			
			$$ S(\gamma , \beta) = \bigcup_{x \in \Sigma(\gamma, \beta), t \in \mathbb{R}} \gamma_{(x,-\nabla_{x} b^{\theta})}(t) $$
			is homeomorphic to $\Sigma(\gamma, \beta) \times \mathbb{R}$.
			\item The diameter of $\Sigma(\gamma, \beta)$ is bounded above by $Kd_{H}(\gamma, \beta) + C.$
			\item The set $\Sigma(\theta) = H_{\theta}(0) \cap H_{-\theta}(0)$ is a connected set. 
\item The set $I(\theta) = \tilde{\mathcal{F}}^{s}(\theta) \cap \tilde{\mathcal{F}}^{u}(\theta)$ is a connected set. 
		\end{enumerate}
	\end{lemma}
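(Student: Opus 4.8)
The plan is to reduce all four assertions to the construction of a single connected set, and then to build that set as a Hausdorff limit of long minimizing geodesic segments, using quasi-convexity for a uniform tube bound and the divergence of geodesic rays to control the limit. Put $f=b^{\theta}+b^{-\theta}$. Since geodesics of $(\tilde{M},\tilde{g})$ are globally minimizing, the triangle inequality gives $f\geq 0$ on $\tilde{M}$, while $f\equiv 0$ on $\Sigma(\theta)=H_{\theta}(0)\cap H_{-\theta}(0)$; as $f$ is $C^{1}$, each point of $\Sigma(\theta)$ is a minimum of $f$, so $\nabla_{x}b^{\theta}=-\nabla_{x}b^{-\theta}$ for all $x\in\Sigma(\theta)$. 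Consequently $(x,-\nabla_{x}b^{\theta})\in\tilde{\mathcal{F}}^{s}(\theta)$ and $(x,-\nabla_{x}b^{-\theta})\in\tilde{\mathcal{F}}^{s}(-\theta)$, so Lemma~\ref{asymptotic-1}(3) applied to $\theta$ and to $-\theta$ shows that $\delta_{x}:=\gamma_{(x,-\nabla_{x}b^{\theta})}$ is both forward and backward asymptotic to $\gamma_{\theta}$; since Busemann asymptotes are gradient flow lines, a direct computation of $b^{\theta}$ and $b^{-\theta}$ along $\delta_{x}$ gives $\delta_{x}\subset\{f=0\}$, whence the (unique) intersection of $\delta_{x}$ with $H_{\theta}(0)$ equals its intersection with $H_{-\theta}(0)$. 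This already yields assertion (1) once we know $\Sigma(\gamma,\beta)\subset\Sigma(\theta)$ (the initial velocity being $-\nabla_{x}b^{\theta}$ by Lemma~\ref{asymptotic-1}(2)). Since $x\mapsto(x,-\nabla_{x}b^{\theta})$ is a homeomorphism of $\Sigma(\theta)$ onto $I(\theta)$ whose inverse is the footpoint projection, assertion (4) follows from assertion (3). Finally (3) itself reduces to the existence part of the lemma: for arbitrary $y\in\Sigma(\theta)$ the geodesic $\delta_{y}$ is bi-asymptotic to $\gamma_{\theta}$, so the connected set $\Sigma(\gamma_{\theta},\delta_{y})\subset\Sigma(\theta)$ contains both $p$ and $y$, and therefore $\Sigma(\theta)$ is connected.

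To construct $\Sigma(\gamma,\beta)$, normalize $\eta$ so that $q:=\gamma_{\eta}(0)\in H_{\theta}(0)$; then $q\in\Sigma(\theta)$ by Lemma~\ref{asymptotic-1}(2) and $b^{\eta}=b^{\theta}$. For $n\in\NN$ let $a_{n},b_{n}\colon[0,1]\to\tilde{M}$ be the unique minimizing geodesics from $\gamma_{\theta}(-n)$ to $\gamma_{\eta}(-n)$ and from $\gamma_{\theta}(n)$ to $\gamma_{\eta}(n)$, and for $s\in[0,1]$ let $\alpha_{n,s}$ be the minimizing geodesic from $a_{n}(s)$ to $b_{n}(s)$, so $\alpha_{n,0}=\gamma_{\theta}|_{[-n,n]}$ and $\alpha_{n,1}=\gamma_{\eta}|_{[-n,n]}$. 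Quasi-convexity, together with the estimate of Lemma~\ref{asymptotic-1}(3) for $d(\gamma_{\theta}(\pm n),\gamma_{\eta}(\pm n))$, yields $d_{H}(\alpha_{n,s},\gamma_{\theta}|_{[-n,n]})\leq R_{0}$ with $R_{0}\leq Kd_{H}(\gamma,\beta)+C$ independent of $n$ and $s$. The functions $b^{\theta}$ and $b^{-\theta}$ each change sign along $\alpha_{n,s}$ for $n$ large, so $\alpha_{n,s}$ crosses $H_{\theta}(0)$ and $H_{-\theta}(0)$ in points lying within $2R_{0}$ of $p$, hence in fixed compact subsets of the horospheres. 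For fixed $s$ the arcs $\alpha_{n,s}$ subconverge to a complete geodesic trapped in the bounded tube around $\gamma_{\theta}$, thus bi-asymptotic to $\gamma_{\theta}$ and (by transitivity) to $\beta$, contained in $\{f=0\}$, and meeting $H_{\theta}(0)$ and $H_{-\theta}(0)$ in a common point $x_{s}=\lim x_{n,s}\in\Sigma(\theta)$. One then sets $\Sigma(\gamma,\beta)$ to be a Hausdorff limit of the connected compacta $X_{n}=\{x_{n,s}\colon s\in[0,1]\}$, each connected by an intermediate-value argument applied to the continuous family $s\mapsto\alpha_{n,s}$. The resulting set is connected, compact, contains $p=x_{n,0}$ and $q=x_{n,1}$, and lies in $\Sigma(\theta)$ because each of its points is a limit of crossing points, hence the $H_{\theta}(0)$-intersection of a complete geodesic in the tube; this proves existence, connectedness, assertion (1), and assertion (2) (the tube bound confines $\Sigma(\gamma,\beta)$ to a $d_{H}(\gamma,\beta)$-controlled neighbourhood of $p$). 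The flow-out $(x,t)\mapsto\gamma_{(x,-\nabla_{x}b^{\theta})}(t)$ from $\Sigma(\gamma,\beta)\times\RR$ onto $S(\gamma,\beta)$ is continuous (the Busemann gradient is Lipschitz), injective (two distinct bi-asymptotic geodesics cannot cross, since at a crossing point they would subtend a positive angle and the divergence of rays would separate them, contradicting asymptoticity), and proper, hence a homeomorphism.

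I expect the delicate point to be this limiting step: one must ensure that the crossing points do not jump as $n$ or $s$ varies --- so that the $X_{n}$ are indeed connected and $\Sigma(\gamma,\beta)$ lands inside $H_{-\theta}(0)$ as well as $H_{\theta}(0)$ --- and one must identify the limit arcs as genuinely bi-asymptotic geodesics and not merely arcs confined to a tube. This is precisely where the divergence of geodesic rays is indispensable, quasi-convexity alone furnishing only the uniform tube bound, together with an Arzel\`a--Ascoli precompactness for the geodesic segments inside the bounded region near the horospheres.
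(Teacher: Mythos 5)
The paper does not actually prove Lemma \ref{strip}: it is quoted verbatim from \cite{kn:Ruggiero-AHL}, so there is no internal proof to compare against. That said, your strategy is the standard one for the generalized flat strip theorem and matches the cited proof in spirit: the identity $\nabla_{x}b^{\theta}=-\nabla_{x}b^{-\theta}$ on $\Sigma(\theta)$ obtained from minimality of $f=b^{\theta}+b^{-\theta}$, the reduction of (3) and (4) to the existence statement, the construction of $\Sigma(\gamma,\beta)$ as a Hausdorff limit of crossing sets of a ruled family of minimizing segments interpolating between $\gamma$ and $\beta$ (quasi-convexity giving the uniform tube, divergence of rays and Lemma \ref{asymptotic-1} identifying the limit segments as bi-asymptotic Busemann asymptotes lying in $\{f=0\}$), and the proper injective flow-out giving the product structure in (1) are all correct and correctly deployed.

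There is, however, one genuine gap, located exactly where you say the delicacy lies, and it is not closed by what you wrote. The set $X_{n}=\{x_{n,s}:s\in[0,1]\}$ of selected crossing points is \emph{not} shown to be connected by ``an intermediate-value argument'': the intermediate value theorem yields, for each $s$, the existence of some zero of $b^{\theta}\circ\alpha_{n,s}$, but any selection of one zero per parameter can jump discontinuously (for instance at a value of $s$ where the segment is tangent to the horosphere at one point while crossing it transversally at another), and the full zero set need not be connected either; so as defined, $X_{n}$ may fail to be a continuum and the Hausdorff limit argument for connectedness collapses. The repair is topological rather than analytic: writing $F_{n}(s,u)=\alpha_{n,s}(u)$ on the parameter square $[0,1]^{2}$, the closed set $Z_{n}=(b^{\theta}\circ F_{n})^{-1}(0)$ separates the edge where $b^{\theta}>0$ from the edge where $b^{\theta}<0$, and a classical separation theorem for the square then provides a connected component of $Z_{n}$ joining $\{s=0\}$ to $\{s=1\}$; since $\gamma_{\theta}$ and $\gamma_{\eta}$ meet $H_{\theta}(0)$ only at $p$ and $q$ respectively, the $F_{n}$-image of that component is a compact connected subset of $H_{\theta}(0)$ containing $p$ and $q$, and these continua (not your $X_{n}$) are what one must pass to the Hausdorff limit. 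Without this step, assertions (3) and (4) are not established. A secondary, quantitative point: your tube estimate places the crossing points within $2R_{0}$ of $p$ with $R_{0}\leq Kd_{H}(\gamma,\beta)+C$, which gives a diameter bound of roughly $4(Kd_{H}(\gamma,\beta)+C)$ rather than the stated $Kd_{H}(\gamma,\beta)+C$; to get the sharp constant one should apply quasi-convexity directly to pairs $\alpha_{n,s}$, $\alpha_{n,s'}$ of interpolating segments, although the weaker bound would suffice for every use of the lemma in this paper.
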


	\section{$C^{1}$-structural stability implies hyperbolicity of the set of closed orbits of geodesic flows}\label{SECclosedorbits}
	
	The  section is devoted, first of all,  to give some precise definitions of the theory of  hyperbolic dynamics; and secondly, to explain what part of the 
	proof of the stability conjecture for diffeomorphisms  can be extended to geodesic flows. 
	
	We start by recalling some basic definitions concerning hyperbolic dynamics.
	
	\begin{definition} \label{hyperbolic1}
		Let $\psi_{t}: N\longrightarrow N$ be a smooth flow without singularities acting on a complete $C^{\infty}$ Riemannian manifold. An invariant
		set $Y \subset N$ is called hyperbolic if there exists $C>0$, $r>0$, and for every $ p \in Y$ there exist subspaces $E^{s}(p)$, $E^{u}(p)$
		such that
		\begin{enumerate}
			\item $E^{s}(p) \oplus E^{u}(p) \oplus X(p) = T_{p}N$ where $X(p)$ is the subspace tangent to the flow.
			\item $\parallel d_{p}\psi_{t}(v) \parallel \leq Ce^{-rt} \parallel v \parallel $ for every $t \geq 0$ and $v \in E^{s}(p)$.
			\item $\parallel d_{p}\psi_{t}(v) \parallel \leq Ce^{rt} \parallel v \parallel $ for every $t \leq 0$ and $v \in E^{u}(p)$.
		\end{enumerate}
		
		The subspace $E^{s}(p)$ is called stable susbpace, the subspace $E^{u}(p)$ is called the unstable subspace.
		When $Y = N$ the flow $\psi_{t}$ is called Anosov. Replacing $\psi_{t}$ by a diffeomorphism we get what is called an Anosov diffeomorphism.
	\end{definition}
	
	The famous Stable manifold Theorem implies that there exist invariant submanifolds $\bf{W}^{s}(p)$, $\bf{W}^{u}(p)$ for every $p$ in the hyperbolic set  such that 
	$$ \lim_{t \rightarrow +\infty} d(\psi_{t}(x), \psi_{t}(p)) =0$$ 
	for every $x \in \bf{W}^{s}(p)$, and 
	$$ \lim_{t \rightarrow -\infty} d(\psi_{t}(x), \psi_{t}(p)) =0$$ 
	for every $x \in \bf{W}^{u}(p)$. (see for instance \cite{kn:HPS}, \cite{kn:Anosov}). The submanifold $\bf{W}^{s}(p)$ is always tangent to the subspace $E^{s}(p)$ at $p$, the submanifold $\bf{W}^{u}(p)$ is always tangent to the subspace $E^{u}(p)$ at $p$. Item (1) of Lemma \ref{asymptotic-1} yields 
	
	\begin{lemma} \label{Busemann-stable}
		Let $(M,g)$ be a complete Riemannian manifold without conjugate points and let $\Lambda \subset T_{1}M$ be a compact invariant hyperbolic set. Then, for every $\theta \in \Lambda$,  the submanifold $\bf{W}^{s}(\theta)$ is an open (in the relative topology) connected neighborhood of $\theta$ in $\mathcal{F}^{s}(\theta)$. Analogously, $\bf{W}^{u}(\theta)$ is an open (in the relative topology) connected neighborhood of $\theta$ in $\mathcal{F}^{u}(\theta)$
	\end{lemma}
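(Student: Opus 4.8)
The plan is to pass to the universal covering $\tilde M$ — where the Busemann objects and Lemma~\ref{asymptotic-1} are available — identify the local strong stable manifold of $\theta$ with a piece of the horospherical leaf $\tilde{\mathcal F}^{s}(\tilde\theta)$, and then promote this identification to all of $\mathbf W^{s}(\theta)$ using invariance of domain together with the equivariance of $\tilde{\mathcal F}^{s}$ under the geodesic flow.

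First I would prove the inclusion $\mathbf W^{s}(\theta)\subset\mathcal F^{s}(\theta)$. Fix a lift $\tilde\theta\in T_{1}\tilde M$ of $\theta$ and take $\eta$ in the local strong stable manifold $\mathbf W^{s}_{\mathrm{loc}}(\theta)$, so $d(\phi_{t}\eta,\phi_{t}\theta)\to 0$ as $t\to+\infty$, the whole forward orbit staying at small distance. Then the lift $\tilde\eta$ of $\eta$ closest to $\tilde\theta$ has $d(\gamma_{\tilde\eta}(t),\gamma_{\tilde\theta}(t))\to 0$, and in particular $\gamma_{\tilde\eta}$ is forward asymptotic to $\gamma_{\tilde\theta}$. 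By Lemma~\ref{asymptotic-1}(1), $\gamma_{\tilde\eta}$ is a Busemann asymptote of $\gamma_{\tilde\theta}$, i.e. $\tilde\eta=(\gamma_{\tilde\eta}(0),-\nabla_{\gamma_{\tilde\eta}(0)}b^{\tilde\theta})$, and $b^{\tilde\eta}=b^{\tilde\theta}+c$ for some real constant $c$; evaluating at $\gamma_{\tilde\eta}(0)$ and using that geodesics of $\tilde M$ are globally minimizing together with $d(\gamma_{\tilde\eta}(t),\gamma_{\tilde\theta}(t))\to 0$ forces $c=0$. Hence $\gamma_{\tilde\eta}(0)\in H_{\tilde\theta}(0)$, so $\tilde\eta\in\tilde{\mathcal F}^{s}(\tilde\theta)$ and $\eta\in\mathcal F^{s}(\theta)$. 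Applying this to each $\phi_{t}\theta$ and using $\mathbf W^{s}(\theta)=\bigcup_{t\ge 0}\phi_{-t}\big(\mathbf W^{s}_{\mathrm{loc}}(\phi_{t}\theta)\big)$ gives $\mathbf W^{s}(\theta)\subset\mathcal F^{s}(\theta)$.

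Next I would upgrade this to an open, connected neighborhood. By Theorem~\ref{Lipschitz}, $\tilde{\mathcal F}^{s}(\tilde\theta)$ is an $(n-1)$-dimensional topological (Lipschitz) submanifold of $T_{1}\tilde M$, and the lift of the embedded $(n-1)$-disk $\mathbf W^{s}_{\mathrm{loc}}(\theta)$ is contained in it; since a continuous injection from an open subset of $\mathbb R^{n-1}$ into a topological $(n-1)$-manifold is open (invariance of domain), $\mathbf W^{s}_{\mathrm{loc}}(\theta)$ is relatively open in $\mathcal F^{s}(\theta)$. Because the geodesic flow permutes horospheres ($b^{\phi_{t}\theta}=b^{\theta}+t$) we have $\phi_{t}\big(\tilde{\mathcal F}^{s}(\tilde\theta)\big)=\tilde{\mathcal F}^{s}(\phi_{t}\tilde\theta)$, so every set $\phi_{-t}\big(\mathbf W^{s}_{\mathrm{loc}}(\phi_{t}\theta)\big)$ is relatively open in $\mathcal F^{s}(\theta)$; their union, which is $\mathbf W^{s}(\theta)$, is then relatively open, contains $\theta$, and is connected (an increasing union of connected sets, or directly the image of an injective immersion of $\mathbb R^{n-1}$). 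The assertion for $\mathbf W^{u}$ follows by applying everything to the time-reversal involution $(p,v)\mapsto(p,-v)$, which conjugates the geodesic flow to its inverse and carries $\mathcal F^{s}$ to $\mathcal F^{u}$.

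I expect the main obstacle to be the covering-space bookkeeping rather than any substantial geometry: one must select the lift $\tilde\eta$ that stays close to $\tilde\theta$ for all forward time, verify $c=0$ (and not just $c$ bounded, which would only place $\tilde\eta$ in a weak stable leaf $\tilde{\mathcal F}^{cs}$), and be careful that "relatively open in $\mathcal F^{s}(\theta)$" is read with the leaf topology transported from $\tilde{\mathcal F}^{s}(\tilde\theta)$. The genuinely geometric input — that a geodesic asymptotic to $\gamma_{\tilde\theta}$ is forced onto the Busemann horosphere $H_{\tilde\theta}(0)$ — is precisely Lemma~\ref{asymptotic-1}(1), and everything else is soft topology and the flow-equivariance of the horospherical leaves.
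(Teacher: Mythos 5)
Your argument is correct and follows exactly the route the paper intends: the paper offers no written proof beyond the remark that Lemma~\ref{asymptotic-1}(1) yields the statement, and your write-up simply fills in that same route (forward asymptoticity $\Rightarrow$ Busemann asymptote, pinning down the horosphere via $d(\gamma_{\tilde\eta}(t),\gamma_{\tilde\theta}(t))\to 0$, then invariance of domain and flow-equivariance for openness). The only point worth flagging is one the paper itself glosses over: Lemma~\ref{asymptotic-1} is stated under the divergence-of-rays hypothesis, which is not listed in Lemma~\ref{Busemann-stable} but is in force throughout the paper's applications.
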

	
	When the geodesic flow of a compact manifold without conjugate points is Anosov the manifold has no conjugate points by a classical result due to Klingenberg \cite{kn:Klingenberg} (see also a nice generalization by R. Ma\~{n}\'{e} \cite{kn:Mane2}). Therefore, Lemma \ref{Busemann-stable} implies that $\bf{W}^{s}(\theta) =\mathcal{F}^{s}(\theta)$, $\bf{W}^{u}(\theta)=\mathcal{F}^{u}(\theta)$ for every $\theta \in T_{1}M$. 
	
	\begin{definition} \label{Ck-stability}
		A smooth flow $\psi_{t} : N\longrightarrow N$ acting on a smooth manifold is $C^{k}$ structurally stable if there exists $\epsilon >0$
		such that every flow $\rho_{t}$ in the $\epsilon$-neighborhood of $\psi_{t}$ in the $C^{k}$ topology is conjugate to $\psi_{t}$. Namely,
		there exists a homeomorphism $h_{\rho}: N\longrightarrow N$ such that
		
		$$h(\psi_{t}(p)) = \rho_{s_{p}(t)}(h(p)) $$
		for every $t \in \mathbb{R}$, where $s_{p}(t)$ is a continuous injective function with $s_{p}(0) =0$.
	\end{definition}
	
	A series of results in the 60's, 70's and 80's characterize $C^{1}$ structurally stable systems (mainly \cite{kn:Robin-Robinson,kn:Mane,kn:Mane88}).
	
	\begin{theorem}
		A diffeomorphism acting on a compact manifold is $C^{1}$ structurally stable if and only if it is Axiom A, namely, the closure of the set of
		periodic orbits is a hyperbolic set and the intersections of stable and unstable submanifolds are always transverse.
	\end{theorem}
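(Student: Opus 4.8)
The statement splits into two implications of very different character, and I would treat them separately. For the \emph{if} direction --- Axiom A plus strong transversality of the stable and unstable submanifolds implies $C^{1}$ structural stability --- the plan is to follow the Robbin--Robinson scheme. First, by the spectral decomposition theorem write the nonwandering set as a finite union of basic sets $\Lambda_{1},\dots,\Lambda_{k}$, each hyperbolic and hence $C^{1}$ structurally stable as an invariant set (shadowing together with expansivity yields, for every nearby diffeomorphism, a conjugacy on an isolating neighborhood of each $\Lambda_{i}$). The strong transversality hypothesis forces the no-cycle property among the $\Lambda_{i}$ (a cycle would produce a non-transverse intersection by a dimension count), so there is a filtration $\emptyset = M_{0} \subset M_{1} \subset \cdots \subset M_{k} = M$ adapted to the decomposition. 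One then extends the local conjugacies across the wandering region: every point of $M$ lies in some $W^{u}(\Lambda_{i}) \cap W^{s}(\Lambda_{j})$, and since these meet transversally one propagates the conjugacy along these invariant manifolds using the inclination ($\lambda$-)lemma. The cleanest way to package this is the infinitesimal one: show $f$ is infinitesimally stable, i.e. the operator $\eta \mapsto \eta - Tf \circ \eta \circ f^{-1}$ on continuous sections of $TM$ is surjective; Axiom A plus strong transversality make this operator surjective (transversality is exactly what kills the cokernel coming from the competing stable/unstable obstruction spaces), and then Robbin's argument, refined to the $C^{1}$ category by Robinson, upgrades infinitesimal stability to structural stability via a fixed-point/implicit-function argument in the appropriate Banach space of maps near the identity.

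For the \emph{only if} direction --- $C^{1}$ structural stability implies Axiom A and strong transversality --- the plan follows Ma\~n\'e. Step 1: structural stability forces every periodic orbit to be hyperbolic; otherwise Franks' lemma lets one $C^{1}$-perturb the derivative cocycle along a periodic orbit to place an eigenvalue on the unit circle and then create a nearby orbit of a different index, contradicting stability. Step 2, the crux: prove that $P := \overline{\operatorname{Per}(f)}$ is a hyperbolic set. Here one combines Pugh's $C^{1}$ closing lemma (to create periodic orbits near any nonwandering point) with Franks' lemma and Ma\~n\'e's linear-cocycle lemmas: if $P$ admitted no uniformly hyperbolic splitting, one could select finite orbit segments with poorly controlled derivatives and realize a $C^{1}$-small perturbation exhibiting a non-hyperbolic periodic orbit, contradicting Step 1; this requires developing dominated splittings and showing that the $C^{1}$-robust hyperbolicity of all periodic points propagates to uniform hyperbolicity on $P$. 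Step 3: the ergodic closing lemma gives $\Omega(f) = P$, so $\Omega(f)$ is hyperbolic with dense periodic orbits, i.e. $f$ is Axiom A. Step 4: strong transversality --- if some $W^{s}(x)$ and $W^{u}(y)$ met non-transversally, a $C^{1}$ perturbation localized near the tangency would create a new intersection (an $\Omega$-explosion, or a cycle between basic sets), and the inclination lemma shows the perturbed system is not conjugate to $f$, contradicting structural stability.

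The main obstacle is Step 2 of the second direction: Ma\~n\'e's theorem that the mere $C^{1}$-robustness of ``all periodic points hyperbolic'' already implies hyperbolicity of $\overline{\operatorname{Per}(f)}$. Everything else is either soft (spectral decomposition, filtrations, $\lambda$-lemma bookkeeping, the infinitesimal-to-structural passage) or a direct appeal to a named perturbation tool (the $C^{1}$ closing lemma, Franks' lemma, the ergodic closing lemma). The obstruction here is precisely the one highlighted in the introduction for geodesic flows: Pugh's $C^{1}$ closing lemma is not available in the geodesic-flow category, which is why the present paper must substitute the global geometry of manifolds without conjugate points --- quasi-convexity and divergence of geodesic rays, via the generalized flat strip Theorem and the continuity of the horospherical foliations --- for this step.
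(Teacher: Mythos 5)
The paper offers no proof of this theorem: it is stated as a classical result of the literature and attributed to Robbin--Robinson (sufficiency of Axiom A plus strong transversality, via infinitesimal stability) and to Ma\~n\'e (necessity, via Franks' lemma, the $C^{1}$ closing lemma, the ergodic closing lemma, and the hyperbolicity of $\overline{\operatorname{Per}(f)}$). Your outline is a faithful and correct summary of exactly those cited arguments, including the correct identification of Ma\~n\'e's Step 2 as the crux and of the unavailability of the $C^{1}$ closing lemma as the reason this scheme does not transfer to geodesic flows, so it matches the paper's (implicit) approach.
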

	
	This result characterizes as well $C^{1}$ structurally stable flows without singularities on compact manifolds. As for special families of systems, Newhouse \cite{kn:Newhouse} showed that a symplectic diffeomorphism acting on a compact manifold is $C^{1}$-structurally stable if and only if it is Anosov.
	
	To give a context to our results we need to explain in some detail the main ideas of the proof of the so-called stability conjecture: $C^{1}$ structurally stable
	diffeomorphims are Axiom A and invariant submanifolds meet transversally, a result due to Ma\~{n}\'{e} \cite{kn:Mane}.
	
	One of the main steps of Ma\~{n}\'{e}'s proof is the hypebolicity of the closure of the set of periodic orbits of $C^{1}$-structurally stable systems. 
	This step  has been extended and improved for geodesic flows in
	the context of the so-called \textit{Ma\~{n}\'{e} perturbations}. Recall that a $C^{\infty}$ Hamiltonian $H:T^{*}M \longrightarrow \mathbb{R}$
	defined in the cotangent bundle of $M$ is called \emph{Tonelli} if $H$ is strictly convex and superlinear in each tangent space $T_{\theta}T^{*}M$,
	$\theta \in T^{*}M$.
	
	\begin{definition} \label{Mane-peturbations}
		A property $P$ of the Hamiltonian flow of a Tonelli Hamiltonian $H: T^{*}M \longrightarrow \mathbb{R}$ is called $C^{k}$ generic
		from Ma\~{n}\'{e}'s viewpoint if given $\epsilon >0$ there exists a $C^{\infty}$ function $f: M \longrightarrow \mathbb{R}$ whose $C^{k}$ norm is
		less than $\epsilon$ such that the Hamiltonian flow of $H_{f}(q,p) = H(q,p) + f(q)$ has the property $P$. The Hamiltonian $H_{f}$ is called a $C^{k}$ Ma\~{n}\'{e} perturbation of the Hamiltonian $H$.
	\end{definition}
	
	By the Maupertuis principle of classical mechanics (sse \cite{kn:Arnold},  given a Riemannian metric $(M,g)$  every small $C^{k}$ Ma\~{n}\'{e} perturbation $H_{f}$ of the
	Hamiltonian $H(q,p) = \frac{1}{2}g_{q}(p,p)$ defines the Riemannian Hamiltonian of a metric $g_{f}$ that is conformal to $g$.
	
	\begin{definition} \label{stability}
		The geodesic flow $\phi_{t}$ of a compact Riemannian manifold $(M,g)$ is $C^{k}$ structural stable from Ma\~{n}\'{e}'s view point
		if there exists a $C^{k}$ open neighborhood of $g$ such that for each metric in the neighborhood the geodesic flow is conjugate to
		$\phi_{t}$.
	\end{definition}
	
	The assumption of structural stability from Ma\~{n}\'{e}'s viewpoint is weaker than the usual one, since it requires persistent dynamics in a neighborhood of
	special type of perturbations of the metric, the conformal ones. Clearly, $C^{1}$ structural stability implies $C^{2}$ structural stability from Ma\~{n}\'{e}'s viewpoint (this is why some authors call this notion $C^2$-structural stability, see for instance \cite{kn:CM}). The next result will be crucial for the proof of the main Theorem of the article. 
	
	\begin{theorem} \label{persistence} (see \cite{kn:ARR})
		Let $(M,g)$ be a compact manifold whose geodesic flow is $C^{2}$-structurally stable from Ma\~{n}\'{e}'s viewpoint.
		Then the closure of the set of periodic orbits is a hyperbolic set for the geodesic flow.
	\end{theorem}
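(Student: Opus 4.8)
The plan is to transport Ma\~{n}\'{e}'s proof that $C^{1}$-structural stability forces $\overline{\mathrm{Per}}$ to be hyperbolic for diffeomorphisms into the setting of geodesic flows under the conformal (Ma\~{n}\'{e} type) perturbations of Definition \ref{Mane-peturbations}; notice that no hypothesis on conjugate points is involved. First I would record that $C^{2}$-structural stability from Ma\~{n}\'{e}'s viewpoint is an open property of $g$: if the geodesic flow of $g$ has it with neighbourhood $U$, then every $g'\in U$ has it as well (any two metrics in $U$ have conjugate geodesic flows, hence are conjugate to each other), so at every step one may freely replace $g$ by a $C^{2}$-small conformal perturbation. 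The proof then reduces to two claims: \textbf{(a)} every closed geodesic is a hyperbolic periodic orbit of $\phi_{t}$ in the sense of Definition \ref{hyperbolic1}; and \textbf{(b)} the constants $C,r$ of that definition may be chosen uniformly over the family of \emph{all} closed geodesics. Granting (a) and (b), invariance and compactness yield that $\overline{\mathrm{Per}(\phi)}$ is hyperbolic.

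Both claims are driven by a Franks-type perturbation lemma for conformal perturbations: given a closed geodesic $\gamma$ and a small symplectic perturbation of the transverse linearized Poincar\'e return map along $\gamma$ (the linearized geodesic flow, reduced transversally to the flow direction, is a symplectic cocycle), one should produce a metric $g_{f}$, with $f$ of small $C^{2}$ norm, for which $\gamma$ is still a closed geodesic and the reduced return map is perturbed exactly as prescribed. For claim (a): if some closed geodesic $\gamma$ is non-hyperbolic, its reduced return map has an eigenvalue on the unit circle, and by the lemma one may, with a $C^{2}$-small conformal perturbation, make this eigenvalue non-resonant and then unfold it; the standard local analysis of such bifurcations for contact (geodesic) flows then yields a nearby geodesic flow whose local picture around $\gamma$ is not topologically equivalent to that of $\phi_{t}$ --- e.g.\ the number of short closed geodesics near $\gamma$, or their stable/unstable dimensions, changes --- contradicting Definition \ref{stability}.

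For claim (b) I would argue by contradiction. If hyperbolicity is not uniform there is a sequence of closed geodesics $\gamma_{n}$ along which the reduced linearized flow is hyperbolic but admits no dominated splitting with \emph{a priori} fixed constants. Invoking the symplectic version of Ma\~{n}\'{e}'s linear-algebra lemma --- a long periodic product of symplectic matrices without uniform domination can be $\epsilon$-perturbed, one factor at a time, so that the product acquires an eigenvalue of modulus one --- and realizing each individual perturbation by the Franks-type lemma on disjoint fundamental domains of a long segment of $\gamma_{n}$, one concatenates these into a single $C^{2}$-small conformal perturbation making $\gamma_{n}$ a \emph{non-hyperbolic} closed geodesic of a nearby metric; that metric is still structurally stable from Ma\~{n}\'{e}'s viewpoint, so this contradicts claim (a). Equivalently: the absence of this obstruction gives a uniform dominated splitting over $\overline{\mathrm{Per}(\phi)}$, and since every periodic orbit is hyperbolic and the structure is symplectic, a finest-dominated-splitting argument upgrades this to uniform hyperbolicity, which is precisely (b).

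The main obstacle is the Franks-type lemma inside the restricted class of \emph{conformal} perturbations. For diffeomorphisms one may perturb the derivative arbitrarily (in a $C^{1}$-small way) near an orbit; here one is only allowed to add a potential $f(q)$ to the Hamiltonian, so the achievable perturbations of the Jacobi equation along $\gamma$ are governed by the Hessian of $f$ in the directions transverse to $\gamma$ --- a constrained family. One must verify that this family still reaches enough symplectic directions to realize any required perturbation of the return map, that the supports at distinct points of $\gamma$ can be chosen disjoint so that the modifications are mutually independent and keep $\gamma$ a geodesic, and that the total $C^{2}$ norm remains small after concatenating many of them along a long orbit. These are exactly the perturbation estimates established in \cite{kn:ARR} (in the spirit of Contreras--Paternain and Rifford--Ruggiero), from which the theorem follows along the lines above.
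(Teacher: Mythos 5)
The paper gives no proof of this theorem; it is quoted as a black box from \cite{kn:ARR}, and your outline is an accurate sketch of exactly the argument carried out there (a Franks-type lemma for $C^{2}$ Ma\~{n}\'{e}/conformal perturbations realizing prescribed perturbations of the linearized Poincar\'e maps, hyperbolicity of each closed orbit by unfolding unit-modulus eigenvalues against structural stability, and uniformity via the symplectic Ma\~{n}\'{e}--Contreras stable-hyperbolicity argument). You also correctly isolate the genuinely hard step --- the constrained Franks lemma --- and correctly attribute it to \cite{kn:ARR}, so this matches the intended proof.
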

	
	The second part of Ma\~{n}\'{e}'s proof of the $C^{1}$-stability conjecture for diffeomorphisms relies on proving that the non-wandering set 
	is the closure of the set of periodic orbits. To do this,  Pugh's $C^{1}$-closing lemma is essential, so this step of Ma\~{n}\'{e}'s work does not extend 
	to  geodesic flows (except on surfaces, cf. \cite{kn:Irie}).  
	
	\section{Regularity of Horospherical foliations and foliated open neighborhoods}
	
	The purpose of the subsection is to explore in more detail some consequences of Theorem \ref{fol-continuity} to show the existence of some special open, $s, u$-foliated sets that will be relevant in the sequel. Our main references are the works of Pesin \cite{kn:Pesin} and Eberlein \cite{kn:Eberlein-96} about mild regularity properties of horospheres. The motivation for the results of the section come from the theory of topological dynamics: foliated neighborhoods appear in Anosov dynamics where we have local product structure for the invariant foliations $\mathcal{F}^{s}$, $\mathcal{F}^{u}$. Without hyperbolicity, we loose the transversality of these foliations, so it is not clear at all that we might get open, foliated neighborhoods. What we show in the section is that the vertical fibers can play the role of "cross sections" for the foliations $\mathcal{F}^{s}$ and $\mathcal{F}^{u}$, providing the existence of a family of foliated open neighborhoods that are somehow fibrations with base at open subsets of the vertical fibers. 
	
	A survey of the basic regularity properties of horospheres requires the introduction of some notations concerning the geometry of the unit tangent bundle. We refer to  \cite{kn:Paternain}, \cite{kn:Ruggiero-Ensaios} for details. Let us denote by $H(\theta) \subset T_{\theta}T_{1}\tilde{M}$ the horizontal subspace, $\mathcal{V}(\theta)\subset T_{\theta}T_{1}\tilde{M}$ the vertical subspace, and let $V(\theta) \subset T_{1}\tilde{M}$ be the vertical fiber of $\theta$. We have that  $T_{\theta}T_{1}\tilde{M}= H(\theta)\oplus \mathcal{V}(\theta)$ and that these subspaces are orthogonal with respect to the Sasaki metric. Moreover, if $\mathcal{X}(\theta)$ is the unit vector tangent to the geodesic flow, and $\mathcal{H}(\theta) \subset H(\theta)$ is the subspace orthogonal to $\mathcal{X}(\theta)$ with respect to the Sasaki metric, then $T_{\theta}T_{1}\tilde{M}= \mathcal{H}(\theta)\oplus \mathcal{V}(\theta)\oplus <\mathcal{X}(\theta)>$ is an orthogonal decomposition. We shall denote by $N(\theta)$ the subspace $N(\theta) =  \mathcal{H}(\theta)\oplus \mathcal{V}(\theta)$. 
	
	Pesin in \cite{kn:Pesin} shows that given $\theta \in T_{1}\tilde{M}$, the second fundamental form of the sphere $S_{T-t}(\gamma_{\theta}(T))$, for $T >t$, centered at $\gamma_{\theta}(T)$ with radius $T-t$ at the point $p= \gamma_{\theta}(t)$ is a linear operator 
	$$U_{T}(t) : T_{\gamma_{\theta}(T-t)} S_{T}(\gamma_{\theta}(T)) \longrightarrow T_{\gamma_{\theta}(T-t)} S_{T}(\gamma_{\theta}(T)) $$
	satisfying the matrix (with respect to an orthonormal frame $\{e_{i}(t)\}$ along $\gamma_{\theta}$ where $e_{1}(t) = \gamma_{\theta}'(t)$ is one of the vectors in the frame) Riccati equation 
	$$ U'(t) + U^{2}(t) + K(t)=0$$ 
	where the derivative is the covariant derivative along $\gamma_{\theta}$, and $K(t)$ is a matrix whose entries are $K_{ij}(t) = g(\mathcal{R}(e_{1},e_{i})e_{1},e_{j})$, $\mathcal{R}$ being the curvature tensor. This implies that the tangent space of the submanifold $\tilde{\mathcal{F}}^{s}_{T}(\theta) $  at the point $\theta$ is given by the subspace 
	$$T_{\theta} \tilde{\mathcal{F}}^{s}_{T}= \{ (W, U_{T}(0)(W)) , \mbox{ } W \in \mathcal{H}(\theta) \}, $$
	in the orthogonal decomposition $ N(\theta) = \mathcal{H}(\theta)\oplus \mathcal{V}(\theta)$. 
	By the comparison theory of the Riccati equation, there exists $L>0$ depending on lower bounds of the sectional curvatures of $(M,g)$, and $T>0$ that might depend on $\theta$, such that $\parallel U_{T}(0) \parallel \leq L$. So the angle formed by $\mathcal{V}_{\theta}$ and the tangent spaces of the submanifolds $\tilde{\mathcal{F}}^{s}_{T}(\theta)$ at $\theta$ is bounded above by some positive constant $\alpha >0$ depending on $L$ and $T$.  
	
	In particular, for large $T$ the subspace $T_{\theta} \tilde{\mathcal{F}}^{s}_{T}(\theta)$ is transverse to the subspace $ \mathcal{V}(\theta)\oplus <\mathcal{X}(\theta)>$, forming an angle that is bounded from below by a positive constant that is uniform in $T_{1}\tilde{M}$.

	Let $V_{\delta}(\theta) \subset V(\theta)$ be a small ball of radius $\delta$ in $V(\theta)$ centered at $\theta$, with respect to the Sasaki metric. Notice that the tangent bundle of the submanifold 
	$$ Q_{\delta, \sigma}(\theta) = \cup_{\mid t \mid <\sigma}\phi_{t}(V_{\delta}(\theta)) $$ 
	is contained in a small cone around the subbundle of subspaces $ \mathcal{V}(\eta)\oplus <\mathcal{X}(\eta)>$ for $\eta \in  Q_{\delta, \sigma}(\theta)$. 
		
	\begin{lemma} \label{vertical-neigh}
		Let $(M,g)$ be a compact Riemannian manifold without conjugate points such that geodesic rays diverge in $(\tilde{M}, \tilde{g})$. Let $ U_{\theta}^{s} \subset \tilde{\mathcal{F}}^{s}(\theta)$ be a connected, open coordinate subset of the submanifold, containing $\theta$ with compact closure.  Then the set  
		$$ Q_{ \delta, \sigma , U_{\theta}}^{s}(\theta) = \cup_{\xi \in U_{\theta}}Q_{\delta, \sigma}(\xi)= \phi_{t}(\cup_{\mid t \mid <\sigma , \xi \in U_{\theta}}V_{\delta}(\xi))$$
		is an open neighborhood of $\theta$ in $T_{1}\tilde{M}$. 
	\end{lemma}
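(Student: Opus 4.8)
The plan is to realize $Q^{s}_{\delta,\sigma,U_\theta}(\theta)$ as the image of an injective map from a product-like domain onto a neighbourhood of $\theta$, and then invoke invariance of domain. Concretely, for fixed small $\delta,\sigma>0$ I would consider the map
$$
\Psi\colon \{(\xi,w,t): \xi\in U_\theta,\ w\in V_\delta(\xi),\ |t|<\sigma\}\longrightarrow T_1\tilde M,\qquad \Psi(\xi,w,t)=\phi_t(w),
$$
where $w$ ranges over the $\delta$-ball in the vertical fibre $V(\xi)$. The source is a manifold of dimension $(n-1)+(n-1)+1=2n-1=\dim T_1\tilde M$ (here $\dim U_\theta=n-1$ since it is a coordinate chart of the leaf $\tilde{\mathcal F}^s(\theta)$, which is an $(n-1)$-Lipschitz submanifold by Theorem \ref{Lipschitz}). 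Its image is exactly $Q^{s}_{\delta,\sigma,U_\theta}(\theta)$. So it suffices to show (i) $\Psi$ is continuous and open onto its image, or better, that $\Psi$ is a homeomorphism onto its image and the image contains an ambient neighbourhood of $\theta=\Psi(\theta,\theta,0)$; and (ii) the image is in fact open in $T_1\tilde M$.

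The first key step is the transversality already recorded in the discussion preceding the lemma: for the leaf $\tilde{\mathcal F}^s(\theta)$ the tangent cone at each point makes a definite angle, bounded below uniformly on $T_1\tilde M$, with the subbundle $\mathcal V(\eta)\oplus\langle\mathcal X(\eta)\rangle$. This follows from the Riccati/Pesin estimate $\|U_T(0)\|\le L$ together with the $C^0$-convergence $\tilde{\mathcal F}^s_T(\theta)\to\tilde{\mathcal F}^s(\theta)$ from Corollary \ref{fol-continuity}: the limiting leaf is an $L$-Lipschitz graph over $\mathcal H(\theta)$ in the splitting $N(\theta)=\mathcal H(\theta)\oplus\mathcal V(\theta)$, hence its tangent directions avoid a cone around $\mathcal V(\theta)\oplus\langle\mathcal X(\theta)\rangle$. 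Near $\theta$ the leaf $U_\theta$ is a Lipschitz graph over an open set in $\mathcal H(\theta)$, and the vertical-plus-flow directions $\{\phi_t(w):w\in V_\delta(\xi),|t|<\sigma\}$ sweep out (a small neighbourhood of the origin in) a complementary $n$-dimensional piece. Because the "leaf direction" $\mathcal H$ and the "fibre+flow direction" $\mathcal V\oplus\langle\mathcal X\rangle$ are topologically transverse with uniform angle, the combined map $\Psi$ is a local homeomorphism at $\theta$: one can either quote a Lipschitz-graph transversality lemma or argue directly that distinct $(\xi,w,t)$ produce distinct points (if $\phi_t(w)=\phi_{t'}(w')$ with $w\in V(\xi),w'\in V(\xi')$ then comparing $\mathcal H$-components forces $\xi=\xi'$ up to the graph, and comparing fibre components plus the no-conjugate-points condition — injectivity of the exponential along orbits — forces $t=t'$, $w=w'$). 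Injectivity, continuity, and a dimension count then give that $\Psi$ is a homeomorphism onto its image by invariance of domain, and that the image is open.

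The main obstacle, and the place where the hypotheses of the lemma (compactness of $M$, divergence of geodesic rays) genuinely enter, is making the transversality quantitative and \emph{uniform enough to conclude openness globally along $U_\theta$}, not merely at the single point $\theta$. Pointwise transversality of $T_\eta\tilde{\mathcal F}^s(\eta)$ with $\mathcal V(\eta)\oplus\langle\mathcal X(\eta)\rangle$ (Theorem \ref{Lipschitz}, last clause: $\tilde{\mathcal F}^s(\eta)\cap V(\eta)=\{\eta\}$) is available, but to run invariance of domain on the full set $Q^s_{\delta,\sigma,U_\theta}(\theta)$ I need that for each $\xi\in U_\theta$ the little slab $Q_{\delta,\sigma}(\xi)$ meets $U_\theta$ only near $\xi$ and that these slabs do not overlap for nearby $\xi$'s — i.e. that $U_\theta$ is a genuine local cross-section to the $n$-dimensional foliation by the $\phi_\bullet(V_\delta(\cdot))$-slabs. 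This is where the uniform angle bound from the Riccati comparison (uniform $L$ from the lower curvature bound) and the uniform continuity of the leaves $\tilde{\mathcal F}^s$ in Theorem \ref{h-continuity}/Corollary \ref{fol-continuity} do the work: they let me choose $\delta,\sigma$ small but uniform so that over the compact closure of $U_\theta$ every slab is a Lipschitz graph transverse to $U_\theta$ with controlled constants, and hence $\Psi$ is injective on the whole domain. Once $\delta,\sigma$ are fixed this way, invariance of domain applied to the injective continuous map $\Psi$ from a $(2n-1)$-manifold to $T_1\tilde M$ shows $Q^s_{\delta,\sigma,U_\theta}(\theta)$ is open and contains $\theta=\phi_0(\theta)$, completing the proof.
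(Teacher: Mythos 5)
Your proposal is correct and follows essentially the same route as the paper: both arguments use the uniform angle bound from the Riccati comparison together with the $C^0$-convergence of the sphere leaves $\tilde{\mathcal{F}}^{s}_{T}$ (Corollary \ref{fol-continuity}) to get that each slab $Q_{\delta,\sigma}(\xi)$ meets the leaf in exactly one point, then parametrize $Q^{s}_{\delta,\sigma,U_\theta}(\theta)$ by $U_\theta\times V_\delta\times(-\sigma,\sigma)$ and conclude by invariance of domain. Your explicit emphasis on global injectivity of the parametrization over the compact closure of $U_\theta$ is exactly the point the paper's sketch relies on.
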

	
	\begin{proof}
		The proof is a consequence of Corollary \ref{fol-continuity} and the general theory of the Riccati equation according to the remarks before Lemma \ref{vertical-neigh}. Let us sketch the proof for the sake of completeness. 
		
		Since the submanifolds $\tilde{\mathcal{F}}^{s}_{T}(\theta) $ converge uniformly in compact sets to $\tilde{\mathcal{F}}^{s}(\theta)$, and their tangent bundles are transverse and uniformly bounded away from  the tangent bundle of the submanifolds $Q_{\delta, \sigma}(\theta)$, then the transverse intersection between $\mathcal{F}^{s}_{T}(\theta) $ and  $Q_{\delta, \sigma}(\xi) $ for $\xi$ in an open neighborhood of $\theta$ will nonempty for $T$ large. Since the closure of $U_{\theta}^{s}$ is compact, and $\tilde{\mathcal{F}}^{s}_{T}(\theta) $ converges uniformly on compact subsets to $\tilde{\mathcal{F}}^{s}(\theta)$, there exists $T_{0}$ large such that  
		$\tilde{\mathcal{F}}^{s}_{T}(\theta) $ will cross transversally $Q_{\delta, \sigma}(\xi) $ for every $\xi \in U_{\theta}$ and $T >T_{0}$. This implies that the limit submanifold $\tilde{\mathcal{F}}^{s}(\theta)$ will cross each $Q_{\delta, \sigma}(\xi) $ at just one point $\xi \in U_{\theta}$ (and actually at every $\xi \in \tilde{\mathcal{F}}^{s}$. 
		
		Now, we can parametrize the set $Q_{ \delta, \sigma , U_{\theta}}^{s}(\theta)$ using the parametrizations of $U_{\theta}$, $V_{\delta}(\theta)$, and the parameter $t$, to get a homeomorphism from $Q_{ \delta, \sigma , U_{\theta}}(\theta) $ to an open subset of $\mathbb{R}^{2n-1}$. Thus, the invariance of domain Theorem grants that $Q_{ \delta, \sigma , U_{\theta}}(\theta) $ is an open subset. 
		
	\end{proof}

	\begin{lemma} \label{Fol-neigh}
		Let $(M,g)$ be a compact manifold without conjugate points such that geodesic rays diverge in $(\tilde{M}, \tilde{g})$. Then, given $\theta \in T_{1}\tilde{M}$, $U_{\theta}$ as above, there exists $\delta ' >0$ such that if $d(\theta, \eta) < \delta '$ we have
		\begin{enumerate}
			\item Each submanifold $\tilde{\mathcal{F}}^{s}(\eta)$  crosses each set $Q_{\delta, \sigma}(\xi)$ for every 
			$\xi \in U_{\theta}^{s}$ at just one point $\eta^{s}(\xi)$. 
			\item The sets 
			$$ \tilde{\mathcal{F}}^{s}_{U_{\theta}}(\eta) = \tilde{\mathcal{F}}^{s}(\eta)\cap Q_{ \delta, \sigma , U_{\theta}^{s}}^{s}(\theta)$$
			are all homeomorphic to $U_{\theta}$. 
			\item The sets 
			$$ S_{U_{\theta}, \delta '}^{s} = \cup_{\eta \in V_{\delta '}(\theta) } \tilde{\mathcal{F}}^{s}_{U_{\theta}^{s}}(\eta) $$ 
			are s-foliated, continuous cross sections for the geodesic flow homeomorphic to $U_{\theta} \times  V_{\delta '}(\theta)$, and the sets 
			$$\Gamma_{U_{\theta}^{s}, \delta ', \sigma}^{s} = \cup_{\mid t \mid < \sigma} \phi_{t}( S_{U_{\theta}, \delta '}^{s} ) $$
			are s-foliated open neighborhoods of $\theta$.
		\end{enumerate}
		Similar statements hold to u-foliated sections and neighborhoods. 
	\end{lemma}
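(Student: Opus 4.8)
The plan is to re-run the scheme of the proof of Lemma \ref{vertical-neigh}, now with the nearby leaf $\tilde{\mathcal{F}}^{s}(\eta)$ in place of $\tilde{\mathcal{F}}^{s}(\theta)$, using continuity of the foliation to transport the transverse intersection with each box $Q_{\delta,\sigma}(\xi)$ from $\theta$ to $\eta$, and then assembling these intersection points into the advertised product sets and invoking invariance of domain. First I would record the uniform geometric input. By the Riccati comparison recalled before Lemma \ref{vertical-neigh}, there are $T_{0}>0$ and $\alpha>0$, both independent of the base point, such that for $T\ge T_{0}$ the tangent spaces of the $C^{\infty}$ submanifolds $\tilde{\mathcal{F}}^{s}_{T}(\eta)$ make an angle at least $\alpha$ with $\mathcal{V}(\eta)\oplus\langle\mathcal{X}(\eta)\rangle$; equivalently, in Sasaki-normal coordinates each $\tilde{\mathcal{F}}^{s}_{T}(\eta)$ is locally a graph over $\mathcal{H}(\eta)$ with Lipschitz constant at most some $\Lambda=\Lambda(\alpha)$. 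By Corollary \ref{fol-continuity} the limit leaf $\tilde{\mathcal{F}}^{s}(\eta)$ is again such a Lipschitz graph, uniformly in $\eta$. On the other hand, by the remark preceding Lemma \ref{vertical-neigh} the tangent spaces of the boxes $Q_{\delta,\sigma}(\xi)$ lie in a cone around $\mathcal{V}\oplus\langle\mathcal{X}\rangle$ whose aperture tends to $0$ as $\delta,\sigma\to 0$. Hence, after shrinking $\delta$ and $\sigma$ relative to $\alpha$, a $\Lambda$-Lipschitz graph over $\mathcal{H}$ and a box $Q_{\delta,\sigma}(\xi)$ — being of complementary dimensions $n-1$ and $n$ inside the $(2n-1)$-manifold $T_{1}\tilde{M}$ — meet in at most one point. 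This gives the uniqueness asserted in item (1), for every leaf of $\tilde{\mathcal{F}}^{s}$.

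For existence, fix $R>0$ so large that $\overline{U_{\theta}}$ together with all boxes $Q_{\delta,\sigma}(\xi)$, $\xi\in\overline{U_{\theta}}$, lie over $B_{R}(p)$. By the ``Moreover'' part of Theorem \ref{h-continuity} (equivalently, by continuity of the foliation in Corollary \ref{fol-continuity}), there is $\delta'>0$ such that $d(\theta,\eta)<\delta'$ forces $\tilde{\mathcal{F}}^{s}(\eta)$ and $\tilde{\mathcal{F}}^{s}(\theta)$ to be $C^{0}$-close over $B_{R}(p)$ (their base horospheres being $C^{1}$-close there). Since $\tilde{\mathcal{F}}^{s}(\theta)$ meets each $Q_{\delta,\sigma}(\xi)$ at the interior point $\xi=\phi_{0}(\xi)\in V_{\delta}(\xi)$ (as shown inside the proof of Lemma \ref{vertical-neigh}), this closeness places $\tilde{\mathcal{F}}^{s}(\eta)$ inside $Q_{\delta,\sigma}(\xi)$ as well, so the intersection is nonempty; combined with the previous paragraph it is a single point $\eta^{s}(\xi)$, proving item (1). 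The same $C^{0}$-closeness, applied to the implicit function theorem for the smooth approximants $\tilde{\mathcal{F}}^{s}_{T}(\eta)$ and then letting $T\to\infty$, shows that $(\eta,\xi)\mapsto\eta^{s}(\xi)$ is jointly continuous on $\{d(\theta,\eta)<\delta'\}\times U_{\theta}$.

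It remains to assemble the sets. Because $Q^{s}_{\delta,\sigma,U_{\theta}}(\theta)=\bigcup_{\xi\in U_{\theta}}Q_{\delta,\sigma}(\xi)$ carries the homeomorphic $(\xi,v,t)$-parametrization furnished by Lemma \ref{vertical-neigh}, any point of $\tilde{\mathcal{F}}^{s}(\eta)$ lying in it is $\eta^{s}(\xi)$ for a unique $\xi$; hence $\xi\mapsto\eta^{s}(\xi)$ is a continuous injection of $U_{\theta}$ onto $\tilde{\mathcal{F}}^{s}_{U_{\theta}}(\eta)$ with continuous inverse (read off the $\xi$-coordinate), which is item (2). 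For item (3) consider the map $V_{\delta'}(\theta)\times U_{\theta}\to T_{1}\tilde{M}$, $(\eta,\xi)\mapsto\eta^{s}(\xi)$: it is continuous by the previous paragraph, and injective, since $\eta_{1}^{s}(\xi_{1})=\eta_{2}^{s}(\xi_{2})$ would put $\eta_{1},\eta_{2}$ on a common leaf of $\tilde{\mathcal{F}}^{s}$ and in the same vertical fiber $V(\theta)$, forcing $\eta_{1}=\eta_{2}$ by Theorem \ref{Lipschitz}, and then $\xi_{1}=\xi_{2}$ by item (2). Its inverse is continuous (recover $\eta$ as the unique point of $\tilde{\mathcal{F}}^{s}(z)\cap V(\theta)$, using Theorem \ref{Lipschitz} and continuity of $\tilde{\mathcal{F}}^{s}$, and recover $\xi$ from the $Q$-coordinates), so $S^{s}_{U_{\theta},\delta'}\cong U_{\theta}\times V_{\delta'}(\theta)$. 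Since each plaque $\tilde{\mathcal{F}}^{s}_{U_{\theta}}(\eta)$ and the disc $V_{\delta'}(\theta)$ are transverse to $\mathcal{X}$ (being transverse to $\mathcal{V}\oplus\langle\mathcal{X}\rangle$), the set $S^{s}_{U_{\theta},\delta'}$ is a local cross-section of the geodesic flow; flowing it over $|t|<\sigma$ yields, again by invariance of domain, an open neighborhood $\Gamma^{s}_{U_{\theta}^{s},\delta',\sigma}$ of $\theta=\theta^{s}(\theta)$, foliated by the flow translates $\phi_{t}(\tilde{\mathcal{F}}^{s}_{U_{\theta}}(\eta))$ of the stable plaques. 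The $u$-statements follow verbatim after replacing $\theta$ by $-\theta$ and reversing time.

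The step I expect to be the real obstacle is the passage from the genuine transversality of the smooth spheres $\tilde{\mathcal{F}}^{s}_{T}$ to a robust transverse-intersection statement for the merely Lipschitz limit leaves $\tilde{\mathcal{F}}^{s}$ that is simultaneously stable under the $C^{1}$-perturbation of the base point supplied by Theorem \ref{h-continuity} and uniform over $\xi\in\overline{U_{\theta}}$; making the ``$\Lambda$-Lipschitz graph meets a thin cone in one point'' count rigorous, rather than appealing loosely to topological transversality, is where the care is needed, while the rest is the bookkeeping already carried out for Lemma \ref{vertical-neigh}.
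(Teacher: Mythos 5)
Your proposal is correct and follows essentially the same route as the paper's proof: item (1) from the continuity of the foliation (Corollary \ref{fol-continuity}) together with the uniform cone/transversality estimate coming from the Riccati comparison, item (2) via the intersection map $\xi \mapsto \tilde{\mathcal{F}}^{s}(\eta)\cap Q_{\delta,\sigma}(\xi)$, and item (3) by assembling these into a product parametrization and invoking invariance of domain. The paper's own argument is only a brief sketch of these steps, so your more careful treatment of the Lipschitz-graph intersection count and of the injectivity in the $\eta$-variable (via Theorem \ref{Lipschitz}) is a faithful, fleshed-out version of the same proof.
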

	
	\begin{proof}
		
		Item (1) follows from Corollary \ref{fol-continuity} and the fact that the tangent space of large spheres at each point is contained in a cone that is uniformly away from the vertical subspace. 
		
		Item (2) follows from item (1), since the intersection of each leaf $\tilde{\mathcal{F}}^{s}(\eta)$ with each submanifold $Q_{\delta, \sigma}(\xi)$ for $\xi \in U_{\theta}^{s}$ gives rise to a homeomorphism between $U_{\theta}^{s}$ and $\mathcal{F}^{s}(\eta) \cap Q_{ \delta, \sigma , U_{\theta}^{s}}(\theta)$. Namely, to each $\xi \in U_{\theta}^{s}$ associate the point 
		$$f_{\eta}(\xi) = \tilde{\mathcal{F}}^{s}(\eta) \cap Q_{ \delta, \sigma} (\xi) . $$
		This map is obviously continuous with continuous inverse. 
		
		Item (3) is straightforward from item (2) and the invariance of domain Theorem.

	\end{proof}
	
	\section{The existence of local product neighborhoods around hyperbolic points}
	
	In this section we deal with the problem of the existence of local product neighborhoods of points despite the fact that the intersections between the leaves in $\mathcal{F}^{s}$ and $\mathcal{F}^{u}$ are not in general transverse, the set $I(\theta)$ defined in Corollary \ref{strip} might not be just one point. 
	
	Through the section, we shall say that a point $\theta \in T_{1}\tilde{M}$ is a \emph{hyperbolic point} if the leaves $\tilde{\mathcal{F}}^{s}(\theta)$ and $\tilde{\mathcal{F}}^{u}(\theta)$ are smooth and transverse at $\theta$. 
	
	According to Theorem \ref{persistence}, every lift in $T_{1}\tilde{M}$ of a point $\eta$ in the closure of the set of periodic points of the geodesic flow in $T_{1}M$ is a hyperbolic point, since we know that the invariant submanifolds are just the projections of the leaves $\tilde{\mathcal{F}}^{s}(\tilde{\eta})$, $\tilde{\mathcal{F}}^{u}(\tilde{\eta})$, by the covering map $P : T_{1}\tilde{M}\longrightarrow T_{1}M$, where $P(\tilde{\eta}) = \eta$.

	\begin{definition} \label{local-product-0}
		Let $(M,g)$ be a compact Riemannian manifold without conjugate points. We say that an open  neighborhood $U$ of a point $\theta \in T_{1}\tilde{M}$ is a local product neighborhood if for every pair of points $\xi , \eta \in U$ we have that 
		$$ \tilde{\mathcal{F}}^{s}(\xi) \cap \tilde{\mathcal{F}}^{cu}(\eta) \neq \emptyset $$
		$$\tilde{ \mathcal{F}}^{u}(\xi) \cap \tilde{\mathcal{F}}^{cs}(\eta) \neq \emptyset $$
	\end{definition}
	
	The goal of the section is to show that
	
	\begin{theorem} \label{local-product}
		Let $(M,g)$ be a compact Riemannian manifold without conjugate points such that $(\tilde{M}, \tilde{g})$ is a quasi-convex space and geodesic rays diverge. Then every hyperbolic point in $T_{1}\tilde{M}$ has an open local product neighborhood. 
	\end{theorem}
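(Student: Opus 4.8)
My plan is to localize the whole question at the hyperbolic point $\theta$ — where by hypothesis the leaves $\tilde{\mathcal{F}}^{s}(\theta),\tilde{\mathcal{F}}^{u}(\theta)$ are $C^{1}$ and transverse — and to transport this transversality to the nearby (merely continuous, Lipschitz) leaves by building a foliation box in which one of the two families becomes a family of flat slices; the intersections required by Definition \ref{local-product-0} then become manifest, with no transversality theory for non-smooth objects needed. First, using that $\theta$ is hyperbolic: $\tilde{\mathcal{F}}^{u}(\theta)$ is $C^{1}$ near $\theta$, and in the horizontal--vertical picture of $T_{1}\tilde M$ the geodesic field $\mathcal{X}$ is nowhere tangent to a horospherical leaf; hence for a small $C^{1}$ disk $D^{u}\subset\tilde{\mathcal{F}}^{u}(\theta)$ around $\theta$ and small $\sigma>0$ the flow-out $\tilde{\mathcal{F}}^{cu}_{\mathrm{loc}}(\theta):=\bigcup_{|t|<\sigma}\phi_{t}(D^{u})$ is an embedded $C^{1}$ $n$-disk with $T_{\theta}\tilde{\mathcal{F}}^{cu}_{\mathrm{loc}}(\theta)=T_{\theta}\tilde{\mathcal{F}}^{u}(\theta)\oplus\langle\mathcal{X}(\theta)\rangle$. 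Since $\tilde{\mathcal{F}}^{s}(\theta)$ and $\tilde{\mathcal{F}}^{u}(\theta)$ are transverse at $\theta$ and $\mathcal{X}(\theta)\notin T_{\theta}\tilde{\mathcal{F}}^{s}(\theta)\oplus T_{\theta}\tilde{\mathcal{F}}^{u}(\theta)$, I get $T_{\theta}\tilde{\mathcal{F}}^{s}(\theta)\oplus T_{\theta}\tilde{\mathcal{F}}^{cu}_{\mathrm{loc}}(\theta)=T_{\theta}T_{1}\tilde M$, so $\tilde{\mathcal{F}}^{cu}_{\mathrm{loc}}(\theta)$ is a $C^{1}$ transversal to the leaf $\tilde{\mathcal{F}}^{s}(\theta)$. (In passing, $I(\theta)=\tilde{\mathcal{F}}^{s}(\theta)\cap\tilde{\mathcal{F}}^{u}(\theta)$, being connected by Lemma \ref{strip}(4) and having $\theta$ as an isolated point, equals $\{\theta\}$; this is not needed below.)

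Next I would set up the foliation box. By Corollary \ref{fol-continuity} the collection $\tilde{\mathcal{F}}^{s}$ is a continuous foliation, and by Theorem \ref{Lipschitz} its leaves are uniformly Lipschitz graphs over the horizontal distribution, so its plaques vary continuously in $C^{0}$; consequently, after shrinking, the disk $\tilde{\mathcal{F}}^{cu}_{\mathrm{loc}}(\theta)$ and every plaque of $\tilde{\mathcal{F}}^{s}$ near $\tilde{\mathcal{F}}^{s}(\theta)$ is a genuine transversal to $\tilde{\mathcal{F}}^{s}$ over a neighborhood $B$ of $\theta$, which yields a foliation chart $\Phi\colon B\to D^{n-1}\times N$ with $\Phi(\theta)=(0,0)$ carrying each plaque of $\tilde{\mathcal{F}}^{s}$ in $B$ to a slice $D^{n-1}\times\{q\}$ and carrying $\tilde{\mathcal{F}}^{cu}_{\mathrm{loc}}(\theta)$ to $\{0\}\times N$. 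On the other hand $\tilde{\mathcal{F}}^{cu}$ is itself a continuous foliation of $T_{1}\tilde M$ (it is the saturation of the continuous, flow-transverse foliation $\tilde{\mathcal{F}}^{u}$ by the geodesic flow), so for $\eta$ close to $\theta$ the plaque $\tilde{\mathcal{F}}^{cu}_{\mathrm{loc}}(\eta)$ is $C^{0}$-close to $\{0\}\times N$ and, each slice $D^{n-1}\times\{q\}$ being a transversal to $\tilde{\mathcal{F}}^{cu}$ near $\theta$, meets each such slice exactly once, hence is the graph $\Phi(\tilde{\mathcal{F}}^{cu}_{\mathrm{loc}}(\eta))=\{(r_{\eta}(q),q):q\in N\}$ of a continuous $r_{\eta}\colon N\to D^{n-1}$ with $\|r_{\eta}\|_{C^{0}}\to0$ as $\eta\to\theta$. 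Now choose a ball $U\ni\theta$ with $U\subset\Phi^{-1}(D^{n-1}_{1/2}\times N_{1/2})$ and $\|r_{\eta}\|_{C^{0}}<1/2$ on $U$: for $\xi,\eta\in U$ with $\Phi(\xi)=(a_{\xi},q_{\xi})$, the point $\Phi^{-1}(r_{\eta}(q_{\xi}),q_{\xi})$ lies in the $\tilde{\mathcal{F}}^{s}$-plaque through $\xi$ and in $\tilde{\mathcal{F}}^{cu}_{\mathrm{loc}}(\eta)\subset\tilde{\mathcal{F}}^{cu}(\eta)$, so $\tilde{\mathcal{F}}^{s}(\xi)\cap\tilde{\mathcal{F}}^{cu}(\eta)\neq\emptyset$. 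The symmetric construction — a foliation box for $\tilde{\mathcal{F}}^{u}$ with transversal $\tilde{\mathcal{F}}^{cs}_{\mathrm{loc}}(\theta)=\bigcup_{|t|<\sigma}\phi_{t}(D^{s})$ — gives $\tilde{\mathcal{F}}^{u}(\xi)\cap\tilde{\mathcal{F}}^{cs}(\eta)\neq\emptyset$ on a second ball, and the intersection of the two balls is a local product neighborhood of $\theta$. (One can also simply observe that the two conditions in Definition \ref{local-product-0} are equivalent, using the flow-invariance of the sets $\tilde{\mathcal{F}}^{cs}(\eta),\tilde{\mathcal{F}}^{cu}(\eta)$.)

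The main obstacle is exactly that $\tilde{\mathcal{F}}^{s},\tilde{\mathcal{F}}^{u}$ are only continuous with Lipschitz leaves, so that the classical persistence of transverse intersections under perturbation does not apply directly; the argument circumvents this by anchoring everything at the smooth, transverse point $\theta$ and by exploiting the uniform Lipschitz bound of Theorem \ref{Lipschitz} — it is precisely this bound that makes $\tilde{\mathcal{F}}^{cu}_{\mathrm{loc}}(\theta)$ and the straightened slices $D^{n-1}\times\{q\}$ remain genuine transversals throughout a full neighborhood of $\theta$, which is what the foliation box and the graph description of $\tilde{\mathcal{F}}^{cu}_{\mathrm{loc}}(\eta)$ rely on. The routine but slightly technical point to carry out carefully is the existence of such a foliated chart with a prescribed $C^{1}$ transversal for a continuous foliation whose leaves satisfy uniform Lipschitz estimates.
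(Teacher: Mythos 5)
Your overall architecture is the same as the paper's: anchor everything at the hyperbolic point $\theta$, where $\tilde{\mathcal{F}}^{s}(\theta)$ and the flow-out $\tilde{\mathcal{F}}^{cu}_{\mathrm{loc}}(\theta)$ are smooth and of complementary dimensions and transverse, and then use the continuity of the foliations (Corollary \ref{fol-continuity}) to conclude that nearby leaves still intersect. But there is a genuine gap at the decisive step. You claim that for $\eta$ near $\theta$ the plaque $\tilde{\mathcal{F}}^{cu}_{\mathrm{loc}}(\eta)$ ``meets each slice $D^{n-1}\times\{q\}$ exactly once, hence is the graph of a continuous $r_{\eta}$,'' justifying this by saying that the slices (the straightened $\tilde{\mathcal{F}}^{s}$-plaques) are transversals to $\tilde{\mathcal{F}}^{cu}$ throughout a neighborhood of $\theta$, thanks to the uniform Lipschitz bound of Theorem \ref{Lipschitz}. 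That bound controls transversality of the horospherical leaves to the \emph{vertical} fibers; it says nothing about mutual transversality of $\tilde{\mathcal{F}}^{s}$ and $\tilde{\mathcal{F}}^{cu}$ at points other than $\theta$. Indeed, nearby points $\eta$ may have nontrivial bi-asymptotic classes $I(\eta)=\tilde{\mathcal{F}}^{s}(\eta)\cap\tilde{\mathcal{F}}^{u}(\eta)$ (only semi-continuity holds, Lemma \ref{semi-cont}), which forces tangential intersections and destroys both the ``exactly once'' claim and the graph description. More fundamentally, asserting that each $\tilde{\mathcal{F}}^{s}$-slice is a transversal section of $\tilde{\mathcal{F}}^{cu}$ over a full neighborhood of $\theta$ is essentially the local product structure you are trying to prove; for merely continuous foliations with Lipschitz leaves, $C^{0}$-closeness to a product configuration does not by itself yield a product.

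What is missing is a topological argument converting ``$C^{0}$-close to a transverse pair of Lipschitz disks of complementary dimensions'' into ``still intersects.'' This is exactly what the paper supplies in Lemma \ref{local-prod}: after normalizing one disk to be flat, the image of the boundary sphere of the perturbed complementary disk projects with degree one onto the sphere in the complementary factor, so the projection of the perturbed disk must cover the origin, producing an intersection point near $\theta$ (existence only, with no uniqueness claim). If you replace your ``exactly once, hence a graph'' step by this degree argument — applied to $\tilde{\mathcal{F}}^{s}(\xi)$ and $\tilde{\mathcal{F}}^{cu}_{\mathrm{loc}}(\eta)$ as $\delta$-perturbations of $\tilde{\mathcal{F}}^{s}(\theta)$ and $\tilde{\mathcal{F}}^{cu}_{\mathrm{loc}}(\theta)$ — your proof closes up and coincides with the paper's. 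As written, however, the nonemptiness of the intersections, which is the entire content of the theorem, is asserted rather than proved.
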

	\bigskip
	
	Since the covering map $P : T_{1}\tilde{M}\longrightarrow T_{1}M$ is a local diffeomorphism, the image by $P$ of a small local product neighborhood of a hyperbolic point $\tilde{\eta} \in T_{1}\tilde{M}$ provides a local product neighborhood for $\eta = P(\tilde{\eta})$. This fact will be very important for the proof of the main Theorem.  
	
	\subsection{Semi-continuity of bi-asymptotic classes}
	
	\bigskip
	
	\begin{definition} \label{bi-asympt-class}
		Let $\theta \in T_{1}\tilde{M}$, and $\Sigma(\theta)= H_{\theta}(0) \cap H_{-\theta}(0)$ be the set defined in Lemma \ref{strip}. We say that the set 
		$$ I(\theta) = \{ (q, -\nabla_{q}b^{\theta}) , \mbox{ } q \in \Sigma(\theta) \}= \tilde{\mathcal{F}}^{s}(\theta) \cap \tilde{\mathcal{F}}^{u}(\theta) $$ 
		is the bi-asymptotic class of $\theta$. 
	\end{definition}
	
	By Lemma \ref{strip}, if $\theta$ is a hyperbolic point then $I(\theta) = \theta$. Moreover, $I(\theta) = \theta$ if and only if the leaves $\tilde{\mathcal{F}}^{s}(\theta)$ and $\tilde{\mathcal{F}}^{u}(\theta)$ meet just at $\theta$. Such kind of points are called  generalized rank one points in \cite{kn:ARR}. 
	
	The following statement describes a kind of semi-continuity of the bi-asymptotic classes that is similar to the semi-continuity of bi-asymptotic classes in nonpositive curvature (see also \cite{kn:MR}). 
	
	\begin{lemma} \label{semi-cont}
		Given $\theta \in T_{1}\tilde{M}$ such that $I(\theta)$ is a compact set, and $\epsilon >0$, there exists $\delta >0$ such that if $d(\eta, \theta) < \delta$ then $I(\eta)$ is contained in the $\epsilon$-tubular neighborhood $U_{\epsilon}(I(\theta))$ of $I(\theta)$ in $T_{1}\tilde{M}$. 
	\end{lemma}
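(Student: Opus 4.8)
The plan is to argue by contradiction, reducing the statement to a compactness argument that rests on the $C^{1}$-continuity of Busemann functions (Theorem~\ref{h-continuity}, which underlies the continuity of the foliations $\tilde{\mathcal{F}}^{s},\tilde{\mathcal{F}}^{u}$ of Corollary~\ref{fol-continuity}) together with the connectedness of the sets $\Sigma(\eta)$ from Lemma~\ref{strip}(3). Suppose the conclusion fails for some $\epsilon_{0}>0$: then there is a sequence $\eta_{n}\to\theta$ and points $\xi_{n}\in I(\eta_{n})$ with $d(\xi_{n},I(\theta))\geq\epsilon_{0}$. I would establish the lemma in two steps: (i) the sequence $\{\xi_{n}\}$ is contained in a fixed compact subset of $T_{1}\tilde{M}$; and (ii) every accumulation point of $\{\xi_{n}\}$ lies in $I(\theta)$. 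Together these are contradictory, since $d(\cdot,I(\theta))\geq\epsilon_{0}$ passes to limits. Step (ii) is routine; step (i), namely preventing the bi-asymptotic classes $I(\eta_{n})$ from escaping to infinity, is the heart of the matter.

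For step (ii), assume $\{\xi_{n}\}$ is bounded and pass to a subsequence with $\xi_{n}\to\xi$. Write $\xi_{n}=(q_{n},-\nabla_{q_{n}}b^{\eta_{n}})$ with $q_{n}\in\Sigma(\eta_{n})=H_{\eta_{n}}(0)\cap H_{-\eta_{n}}(0)$, so that $q_{n}\to q:=\gamma_{\xi}(0)$. By Theorem~\ref{h-continuity} the Busemann functions $b^{\eta_{n}}$ and $b^{-\eta_{n}}$ converge to $b^{\theta}$ and $b^{-\theta}$ in the $C^{1}$ topology uniformly on compact sets; since Busemann functions are $1$-Lipschitz this gives $b^{\theta}(q)=\lim_{n}b^{\eta_{n}}(q_{n})=0$ and $b^{-\theta}(q)=\lim_{n}b^{-\eta_{n}}(q_{n})=0$, hence $q\in\Sigma(\theta)$, and moreover $-\nabla_{q_{n}}b^{\eta_{n}}\to-\nabla_{q}b^{\theta}$, so $\xi=(q,-\nabla_{q}b^{\theta})\in I(\theta)$ by Definition~\ref{bi-asympt-class}. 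This contradicts $d(\xi_{n},I(\theta))\geq\epsilon_{0}$, proving step (ii).

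For step (i) — the main obstacle — I would exploit that $I(\theta)$ is compact, say $I(\theta)\subset B_{r}(\theta)$ for some $r>0$ (so in particular $d(\gamma_{\zeta}(0),\gamma_{\theta}(0))<r$ for every $\zeta\in I(\theta)$), and fix $\rho:=r+1$. Suppose toward a contradiction that $\{\xi_{n}\}$ is unbounded in $T_{1}\tilde{M}$. Since the Sasaki distance exceeds the footpoint distance by at most the diameter of a unit fiber, which is uniformly bounded because $M$ is compact, and since $\gamma_{\eta_{n}}(0)\to\gamma_{\theta}(0)$, the numbers $m_{n}:=\sup\{\,d(\gamma_{\zeta}(0),\gamma_{\eta_{n}}(0)):\zeta\in I(\eta_{n})\,\}$ must be unbounded. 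Now $I(\eta_{n})$ is the image of the \emph{connected} set $\Sigma(\eta_{n})$ (Lemma~\ref{strip}(3)) under the continuous map $q\mapsto(q,-\nabla_{q}b^{\eta_{n}})$, and $\gamma_{\eta_{n}}(0)\in\Sigma(\eta_{n})$; hence the continuous function $q\mapsto d(q,\gamma_{\eta_{n}}(0))$ assumes every value in $[0,m_{n}]$ on $\Sigma(\eta_{n})$. Picking $n$ with $m_{n}>\rho$ we obtain $q_{n}\in\Sigma(\eta_{n})$ with $d(q_{n},\gamma_{\eta_{n}}(0))=\rho$, so that $\zeta_{n}:=(q_{n},-\nabla_{q_{n}}b^{\eta_{n}})\in I(\eta_{n})$ has footpoint at distance exactly $\rho$ from $\gamma_{\eta_{n}}(0)\to\gamma_{\theta}(0)$; thus $\{\zeta_{n}\}$ lies in a fixed compact set, and arguing exactly as in step (ii) any accumulation point $\zeta$ satisfies $\zeta\in I(\theta)$ while $d(\gamma_{\zeta}(0),\gamma_{\theta}(0))=\rho>r$, contradicting $I(\theta)\subset B_{r}(\theta)$. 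Hence $\{\xi_{n}\}$ is bounded, and step (ii) finishes the proof. The genuinely nontrivial input, beyond soft compactness and the continuity statements, is the connectedness in Lemma~\ref{strip}(3): it is precisely what forbids the bi-asymptotic class from breaking off a far-away component as $\eta\to\theta$.
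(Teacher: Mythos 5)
Your proof is correct, and at its core it uses the same two ingredients as the paper's argument: a proof by contradiction, and the connectedness of the bi-asymptotic class (Lemma \ref{strip}) to prevent $I(\eta_n)$ from ``jumping'' away from $I(\theta)$ without leaving a trace in a fixed compact set. The implementations differ in two places, though. First, where you prove boundedness of the offending points separately, via an intermediate-value argument on the connected set $\Sigma(\eta_n)$ to manufacture points of $I(\eta_n)$ whose footpoints sit at a prescribed distance $\rho$ from $\gamma_{\eta_n}(0)$, the paper gets compactness in one stroke: since $I(\eta_n)$ is connected, contains $\eta_n$ (which is eventually inside $U_{\epsilon_0}(I(\theta))$), and is not contained in $U_{\epsilon_0}(I(\theta))$, it must meet $\partial U_{\epsilon_0}(I(\theta))$, which is compact because $I(\theta)$ is. Second, to identify the limit point as an element of $I(\theta)$, the paper uses the quasi-convexity estimate of Lemma \ref{strip}(2) to bound $d_H(\gamma_{\bar\eta_n},\gamma_{\eta_n})$ uniformly and concludes that the limit geodesic is bi-asymptotic to $\gamma_\theta$ (hence, via Lemma \ref{asymptotic-1}, lies in $I(\theta)$), whereas you pass to the limit in the defining equations $b^{\eta_n}(q_n)=b^{-\eta_n}(q_n)=0$ using the $C^1$ convergence of Busemann functions from Theorem \ref{h-continuity} and Corollary \ref{fol-continuity}. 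Both mechanisms are legitimate; yours is slightly more explicit about why the limit vector (and not just its footpoint) lands in $I(\theta)=\tilde{\mathcal{F}}^{s}(\theta)\cap\tilde{\mathcal{F}}^{u}(\theta)$, while the paper's is shorter and produces the contradiction directly from the limit point lying on $\partial U_{\epsilon_0}(I(\theta))$. Note that you cannot claim independence from quasi-convexity: the connectedness of $\Sigma(\eta_n)$ that drives your step (i) is exactly the content of Lemma \ref{strip}(3), which requires it.
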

	
	\begin{proof}
		
		Suppose by contradiction that the statement is not true. Then there exists $\epsilon_{0} >0$ such that for every $n\in \mathbb{N}$, there exists $\eta_{n} \in T_{1}\tilde{M}$ with $d(\eta_{n}, \theta) < \frac{1}{n}$ and $I(\eta_{n})$ is not contained in $U_{\epsilon_{0}}(I(\theta))$. Since $I(\eta_{n})$ is connected, and each connected component of the boundary $\partial U_{\epsilon_{0}}(I(\theta))$ of $U_{\epsilon_{0}}(I(\theta))$ disconnects the ambient space, there must be some point $\bar{\eta}_{n} \in I(\eta_{n}) \cap \partial U_{\epsilon_{0}}(I(\theta))$. By the generalized strip Theorem (Lemma \ref{strip}), we have that 
		\begin{eqnarray*}
			d_{H}(\gamma_{\bar{\eta}_{n}}, \gamma_{\eta_{n}}) & \leq &  Kd(\gamma_{\bar{\eta}_{n}}(0), \gamma_{\eta_{n}}(0)) +C  \\
			&\leq & Kd(\bar{\eta}_{n}, \eta_{n}) +C \\
			&\leq & K(d(\bar{\eta}_{n}, \theta) + d(\theta, \eta_{n})) + C \leq K(\epsilon_{0} + \frac{1}{n}) + C
		\end{eqnarray*}
		Then, taking convergent subsequences of $\eta_{n}$, $\bar{\eta}_{n}$, we get limit points $\eta_{\infty} \in I(\theta)$, $\bar{\eta}_{\infty} \in \partial U_{\epsilon_{0}}(I(\theta))$ such that 
		$$ d_{H}(\gamma_{\bar{\eta}_{\infty}}, \gamma_{\eta_{\infty}})  \leq   K\epsilon_{0} + C $$
		Since the geodesic $\gamma_{\eta_{\infty}}$ is bi-asynptotic to $\gamma_{\theta}$, the geodesic 
		$\gamma_{\bar{\eta}_{\infty}}$ is bi-asymptotic to $\gamma_{\theta}$ as well . Therefore, $\bar{\eta}_{\infty} \in I(\theta)$ which is a contradiction since the set $\partial U_{\epsilon_{0}}(I(\theta))$ is disjoint from $U_{\epsilon_{0}}(I(\theta))$.

	\end{proof}
	
	\begin{corollary} \label{rank-one-cont}
		The map $\theta \rightarrow I(\theta)$ is continuous in the Hausdorff topology if $I(\theta)= \theta$. In particular, every hyperbolic point is a point of continuity of the map $\theta \rightarrow I(\theta)$. 
	\end{corollary}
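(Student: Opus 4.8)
The plan is to derive Hausdorff continuity at $\theta$ directly from the one-sided estimate of Lemma~\ref{semi-cont}, together with the elementary fact that $\theta \in I(\theta)$ for every $\theta \in T_{1}\tilde{M}$: indeed $\gamma_{\theta}$ is trivially bi-asymptotic to itself, which amounts to $(p,-\nabla_{p} b^{\theta}) = \theta = (p, \nabla_{p} b^{(p,-v)})$, so $\theta \in \tilde{\mathcal{F}}^{s}(\theta) \cap \tilde{\mathcal{F}}^{u}(\theta) = I(\theta)$.

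First I would fix a point $\theta$ with $I(\theta) = \{\theta\}$; this set is compact, so Lemma~\ref{semi-cont} applies to it. Given $\epsilon > 0$, choose $\delta \in (0,\epsilon]$ so that $d(\eta,\theta) < \delta$ implies $I(\eta) \subset U_{\epsilon}(I(\theta)) = B_{\epsilon}(\theta)$. For such $\eta$ both one-sided bounds defining the Hausdorff distance hold: on the one hand every point of $I(\eta)$ lies within $\epsilon$ of $\theta$, hence within $\epsilon$ of $I(\theta)$; on the other hand $\theta$ lies within $d(\theta,\eta) < \delta \le \epsilon$ of the point $\eta \in I(\eta)$, hence within $\epsilon$ of $I(\eta)$. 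Therefore $d_{H}(I(\eta), I(\theta)) \le \epsilon$, and since $\epsilon$ was arbitrary, $\theta$ is a point of continuity of the map $\eta \mapsto I(\eta)$ in the Hausdorff topology.

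For the last assertion I would invoke Lemma~\ref{strip} (see also the remark following Definition~\ref{bi-asympt-class}): at a hyperbolic point $\theta$ the leaves $\tilde{\mathcal{F}}^{s}(\theta)$ and $\tilde{\mathcal{F}}^{u}(\theta)$ are smooth and transverse at $\theta$, which forces $I(\theta) = \{\theta\}$, so such a point falls under the case just treated.

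There is essentially no obstacle here; the substance is entirely contained in Lemma~\ref{semi-cont}. The only point requiring care is that Hausdorff continuity is a two-sided condition, so besides the upper semi-continuity of Lemma~\ref{semi-cont} one must also verify the inclusion $I(\theta) \subset U_{\epsilon}(I(\eta))$; this comes for free precisely because $I(\theta)$ is a singleton and $\eta \in I(\eta)$ tends to $\theta$. At a point where $I(\theta)$ is a nontrivial strip cross-section this lower semi-continuity can genuinely fail, which is exactly why the hypothesis $I(\theta) = \theta$ cannot be dropped.
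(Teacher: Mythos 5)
Your argument is correct and is precisely the deduction the paper leaves implicit (the corollary is stated without proof after Lemma \ref{semi-cont}): upper semi-continuity comes from Lemma \ref{semi-cont}, and the lower inclusion $I(\theta)\subset U_{\epsilon}(I(\eta))$ follows because $I(\theta)=\{\theta\}$ and $\eta\in I(\eta)$ with $d(\eta,\theta)<\delta$. You also correctly identify why the hypothesis $I(\theta)=\theta$ is needed and why hyperbolic points satisfy it, so there is nothing to add.
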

	
	\subsection{Local product structure near hyperbolic points through  differential topology}

	\bigskip
	
	Let $\mathbb{D}^k_a = \{ x \in \RR^k \ : \ \|x\|\leq a \}$ and $\mathbb{D}^k = \mathbb{D}^k_1$. Denote by 
	$$S^k_a =\{ x \in \RR^k \ : \ \|x\|=a\} = \partial \mathbb{D}^k_a$$ and let $S^k= S^k_1$. 
	
	Let $\varphi: \mathbb{D}^k \to \RR^d$ be a Lipschitz embedding. Let us define, for $ x \in \mathrm{int}(\mathbb{D}^k)$ the set $\partial \varphi (x)$ to be the set of tangent vectors $v \in \RR^d$ in the following sense: there is a curve $\gamma: (-\eps, \eps) \to \mathbb{D}^k$ with $\gamma(0)=x$ so that for some $t_k \to 0$ we have that $|\varphi \circ \gamma(t_k) - \varphi(x) - t_k v| = o(t_k)$. Note that being a Lipschitz embedding implies in particular (by definition) that $\partial \varphi(x)$ contains a $k$-dimensional subspace and is contained in a $k$-dimensional cone around such a subspace. 
	
	We say that two embeddings $\varphi_1: \mathbb{D}^{\ell_1}  \to \RR^d$,  $\varphi_2: \mathbb{D}^{\ell_2} \to \RR^d$ with $\ell_1 + \ell_2 =d$  and   $\varphi_1(0)=\varphi_2(0)=0$, are \emph{transverse} at $0$ if $\partial \varphi_1 (0) \cap \partial \varphi_2(0) = \{0\}$.
	
	Note that up to change of coordinates, if $\varphi_1: \mathbb{D}^{\ell_1}  \to \RR^d$ and $\varphi_2: \mathbb{D}^{\ell_2} \to \RR^d$ with $\ell_1 + \ell_2 =d$ are transverse at $\varphi_1(0)=\varphi_2(0)=0$, we can assume without loss of generality that 
	there are uniformly Lipschitz functions $\tilde{\varphi}_1: \mathbb{D}^{\ell_1} \to \RR^{\ell_2}$ and  $\tilde{\varphi}_2: \mathbb{D}^{\ell_2} \to \RR^{\ell_1}$ so that $\varphi_1(x) = (x, \tilde{\varphi}_1(x))$ and $\varphi_2(y)=(\tilde{\varphi}_2(y),y)$.  Moreover, by a further change of coordinates we can assume that the functions $\tilde{\varphi}_i$ both verify that for $x \in \mathbb{D}^{\ell_i}$ small:
	
	\begin{equation}\label{eq:110lip}
		\| \tilde \varphi_i(x) \| \leq \frac{1}{10} \|x\|. 
	\end{equation}

	Given $\delta>0$ we say $\psi_1, \psi_2$ are $\delta$-\emph{perturbations} of $\varphi_1, \varphi_2$ if there they are $\delta$-$C^0$-close (that is,  $\|\psi_i(x) - \varphi_i(x)\|< \delta$ for every $x \in \mathbb{D}^{\ell_i}$) and the images of $\psi_i$ can be written as a graph on the previously chosen coordinates. 
	
	Now we can prove: 
	
	\begin{lema} \label{local-prod}
		Let $\varphi_1: \mathbb{D}^{\ell_1}  \to \RR^d$ and $\varphi_2: \mathbb{D}^{\ell_2} \to \RR^d$ with $\ell_1 + \ell_2 =d$ be Lipschitz embeddings so that $\varphi_1(0) = \varphi_2(0)= 0$  which are transverse at $0$. Then, for every $\eps>0$ there exists $\delta>0$ such that if $\psi_1: \mathbb{D}^{\ell_1}  \to \RR^d$ and $\psi_2: \mathbb{D}^{\ell_2} \to \RR^d$ are $\delta$-perturbations, then we have that $\psi_1(\mathbb{D}^{\ell_1})$ intersects $\psi_2(\mathbb{D}^{\ell_2})$ at  a point within a distance $\eps$ from the origin $0 \in \RR^d$.  
	\end{lema}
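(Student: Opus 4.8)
The plan is to turn the problem of intersecting the two perturbed disks into a Brouwer fixed point problem on a small ball of $\RR^{\ell_1}$. By the normalization carried out just before the statement I may assume $\varphi_1(x)=(x,\tilde\varphi_1(x))$ and $\varphi_2(y)=(\tilde\varphi_2(y),y)$ with $\tilde\varphi_1,\tilde\varphi_2$ uniformly Lipschitz and $\|\tilde\varphi_i(x)\|\le\frac1{10}\|x\|$ on a fixed small ball. Given a $\delta$-perturbation $\psi_1,\psi_2$, its defining property says the image of $\psi_1$ is the graph of a continuous (indeed Lipschitz) map $f_1$ over $\mathbb{D}^{\ell_1}$ (possibly shrunk by an $O(\delta)$ collar near $\partial\mathbb{D}^{\ell_1}$), and the image of $\psi_2$ is the graph of such an $f_2$ over $\mathbb{D}^{\ell_2}$. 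The $C^0$-closeness of $\psi_i$ to $\varphi_i$ together with the Lipschitz bound on $\tilde\varphi_i$ yields a constant $C_0$, depending only on the Lipschitz constants, with $\|f_i-\tilde\varphi_i\|_{C^0}\le C_0\delta$; in particular $\|f_1(x)\|\le\frac1{10}\|x\|+C_0\delta$ and $\|f_2(y)\|\le\frac1{10}\|y\|+C_0\delta$.

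Next I would note that a point of $\psi_1(\mathbb{D}^{\ell_1})\cap\psi_2(\mathbb{D}^{\ell_2})$ is exactly a pair $(x,y)$ with $y=f_1(x)$ and $x=f_2(y)$, that is, a fixed point of $T:=f_2\circ f_1$, the intersection point itself being $(x,f_1(x))$. Fixing a radius $a>0$, for $\|x\|\le a$ one has $\|f_1(x)\|\le\frac a{10}+C_0\delta$, which for $\delta$ small lies in the ball on which the bound on $\tilde\varphi_2$ holds, so $\|T(x)\|\le\frac1{10}\bigl(\frac a{10}+C_0\delta\bigr)+C_0\delta\le\frac a{100}+2C_0\delta$; hence $T$ maps the closed ball $\mathbb{D}^{\ell_1}_a$ into itself once $\delta\le\frac{99}{200}\,\frac a{C_0}$. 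Since $T$ is continuous, Brouwer's fixed point theorem supplies $x_0\in\mathbb{D}^{\ell_1}_a$ with $T(x_0)=x_0$, and then $p:=(x_0,f_1(x_0))\in\psi_1(\mathbb{D}^{\ell_1})\cap\psi_2(\mathbb{D}^{\ell_2})$ with $\|p\|\le\sqrt{a^2+\bigl(\tfrac a{10}+C_0\delta\bigr)^2}$.

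To conclude, given $\eps>0$ I would take $a=\min\{\eps/2,\tfrac12\}$ and then any $\delta$ below $\min\{\eps/(20C_0),\,99a/(200C_0)\}$ small enough that $\tfrac a{10}+C_0\delta$ still lies in the small ball where all the normalized inequalities are valid; this forces $\|p\|<\eps$. The argument is soft, and the only delicate points are bookkeeping: one must check that a $\delta$-perturbation genuinely reparametrizes as a graph that is $C^0$-close to $\tilde\varphi_i$ over a fixed small ball — which uses that $\varphi_i$ is an embedding and $C^0$-closeness, so the graph's domain can be taken to be $\mathbb{D}^{\ell_i}$ minus an $O(\delta)$-collar — and one must keep every composition inside the region where $\|\tilde\varphi_i(x)\|\le\frac1{10}\|x\|$ holds, which is arranged by choosing $a$ first and $\delta$ afterwards. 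I want to emphasize that only the pointwise bound $\|\tilde\varphi_i(x)\|\le\frac1{10}\|x\|$ and the continuity of $f_1,f_2$ enter the proof — never a contraction estimate on their Lipschitz constants — so Brouwer, rather than the contraction mapping principle, is the tool to use.
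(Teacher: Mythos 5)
Your proof is correct, and it reaches the conclusion by a route that is organized differently from the paper's. The paper first performs the change of variables $(x,y)\mapsto (x,\,y-\tilde\psi_1(x))$ to flatten the first perturbed disk onto $\mathbb{D}^{\ell_1}_a\times\{0\}$, and then runs a degree argument on the projection of $\psi_2(\mathbb{D}^{\ell_2}_a)$ to the second factor: the projected boundary sphere stays close to the inclusion of $S^{\ell_2}_a$, so the projected disk must cover the origin, producing a point of $\psi_2$ of the form $(u,0)$ with $\|u\|$ small, which lies on the flattened $\psi_1$. You instead keep both disks as graphs $y=f_1(x)$ and $x=f_2(y)$, observe that an intersection point is exactly a fixed point of $T=f_2\circ f_1$, verify $T(\mathbb{D}^{\ell_1}_a)\subset\mathbb{D}^{\ell_1}_a$ from the $\tfrac1{10}$-cone bound plus the $C_0\delta$ graph error, and invoke Brouwer. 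The two arguments rest on the same topological fact (Brouwer is the degree argument in disguise), but yours avoids the normalizing coordinate change and makes the quantitative dependence of $\delta$ on $a$ and $\eps$ explicit, while the paper's version keeps the geometric picture of one disk crossing a flattened one. Your bookkeeping on the delicate points is right: the perturbed images reparametrize as graphs over $\mathbb{D}^{\ell_i}$ minus an $O(\delta)$-collar with $\|f_i-\tilde\varphi_i\|_{C^0}\leq (1+\mathrm{Lip})\,\delta$, and continuity of $f_i$ follows from compactness of the graph (your parenthetical ``indeed Lipschitz'' is not justified by the definition of $\delta$-perturbation, but it is also never used, since Brouwer only needs continuity).
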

	
	\begin{proof}
		Fix $\eps>0$. Given $a>0$ small, where equation \eqref{eq:110lip} holds, we know that $\|\tilde \varphi_i(y)\| \leq a/10$ for all $y \in \mathbb{D}^{\ell_i}_a$. We can assume that $a\ll \eps$. 
		
		The images of $S^{\ell_i}_a$ by $\varphi_i$ are disjoint spheres of the form $$S_1 = \{(x, \tilde \varphi_1(x)) \ : \ \|x\|=a \}$$ and $$S_2 = \{ (\tilde \varphi_2(y), y) \ : \ \|y\|=a\}.$$ 
		
		Now, we consider $\delta \ll a/100$ and let $\psi_i$ be $\delta$-perturbations of $\varphi_i$. By making the change of variables $(x,y) \mapsto (x, y - \tilde \psi_1(x))$ we can assume that $\psi_i$ is the inclusion of $\mathbb{D}^{\ell_1}_a$ on $\mathbb{D}^{\ell_1} \times \{0\}$ (equivalently that $\tilde \varphi_1=0$. Note that since $\delta \ll a/100$ the image of $\psi_2$ of $S^{\ell_2}_a$ is still $a/50$-close to $S_2$ whose projection to $\{0\} \times ( \RR^{\ell_2} \setminus \{0\})$ is close to the inclusion of $S^{\ell_2}_a$ in $\{0\} \times \RR^{\ell_2}$. Moreover, the image of $\psi_2$ of $\DD^{\ell_2}_a$ is $a/2$ close to $\{0\} \times \RR^{\ell_2}$ it follows that the projection of the image of $\psi_2$ to $\{0\} \times \RR^{\ell_2}$  must\footnote{This can be shown with a degree argument, as the projection of the image of $\psi_2$ of the boundary of $\DD^{\ell_2}_a$ is close to the inclusion of the sphere $S^{\ell_2}_a$ after normalizing the points of this inclusion we get a degree one map from the sphere to the sphere. Since this map can be filled by the projection of the image of $\DD^{\ell_2}_a$ it follows that this image must cover the whole interior of $S^{\ell_2}_a$ in particular contain $0$.} intersect $(0,0)$ and therefore must intersect the image of $\psi_1$ completing the proof. 
	\end{proof}

	\bigskip
	
	\textbf{Proof of Theorem \ref{local-product}}
	\bigskip
	
	The proof of Theorem \ref{local-product} follows from the application of Lemma \ref{local-prod} to the submanifolds $\tilde{\mathcal{F}}^{s}(\theta) $ and $\tilde{\mathcal{F}}^{cu}(\theta)$ at a hyperbolic point $\theta \in T_{1}\tilde{M}$. Indeed, we have that they have complementary dimensions in $T_{1}\tilde{M}$, $(n-1)$ for the first one and $n= dim(M)$ for the second one, and that they are transverse at $\theta$ by assumption. Moreover, the foliations $\tilde{\mathcal{F}}^{s}$, $\tilde{\mathcal{F}}^{cu}$ are continuous by the assumptions of Theorem \ref{local-product}. So up to a local coordinate change in a small neighborhood of $\theta$, the leaves $\tilde{\mathcal{F}}^{s}(\theta) $ and $\tilde{\mathcal{F}}^{cu}(\theta)$ locally satisfy the assumptions of the embeddings $\varphi_{1}$, $\varphi_{2}$, and nearby leaves satisfy the assumptions of the  perturbations $\tilde{\varphi}_{1}$, $\tilde{\varphi}_{2}$ defined in Lemma \ref{local-prod}. An analogous reasoning shows that the leaves 
	$\tilde{\mathcal{F}}^{cs}$, $\tilde{\mathcal{F}}^{u}$ intersect locally 
	at every point in an open neighborhood of $\theta$. This finishes the proof of Theorem \ref{local-product}. 
	
\bigskip

{\textbf Remark:}

Theorem \ref{local-product} implies in some sense that in a small neighborhood of a hyperbolic point there is a sort of weak hyperbolicity. We know that the sets $I(\eta)$ where the transverse intersection between invariant leaves fails is a small set according to Lemma \ref{semi-cont}, and that stable and unstable invariant leaves cross at every point in the neighborhood. The geodesic flow is like an expansive flow up to the identification of each set $I(\eta)$ with a single point by an equivalence relation (see for instance \cite{kn:GR1,kn:GR2}).

	\section{Contraction of a stable leaf outside a bi-asymptotic class under the action of the geodesic flow}
	
	The goal of the section is to study weak hyperbolic properties of the dynamics in open neighborhoods of a bi-asymptotic class $I(\theta)$ for $\theta \in T_{1}\tilde{M}$. Recall that $P : T_{1}\tilde{M} \longrightarrow T_{1}M$ is the covering map $P (p,v) = (\pi(p), d\pi(v))$, induced by the covering map $\pi$. 
	
	Let us recall some objects and notations of Section 4. For $V_{\delta}(\theta) \subset V(\theta)$ a small ball of radius $\delta$ in the vertical fiber $V(\theta)$ centered at $\theta$, with respect to the Sasaki metric, we defined the sets 
	$$ Q_{\delta, \sigma}(\theta) = \cup_{\mid t \mid <\sigma}\phi_{t}(V_{\delta}(\theta)), $$ 
	$$ Q_{ \delta, \sigma , U_{\theta}}(\theta) = \cup_{\xi \in U_{\theta}}Q_{\delta, \sigma}(\xi)= \phi_{t}(\cup_{\mid t \mid <\sigma , \xi \in U_{\theta}}V_{\delta}(\xi)) , $$
	and 
	$$ S_{U_{\theta}, \delta '}^{s} = \cup_{\eta \in V_{\delta '}(\theta) } \tilde{\mathcal{F}}^{s}_{U_{\theta}}(\eta) .$$ 
	Here, $U_\theta$ is an open coordinate neighborhood of $\theta$ in $\tilde{\mathcal{F}}^{s}(\theta)$, and 
	$\tilde{\mathcal{F}}^{s}_{U_{\theta}}(\eta) $ is an relative open set in $\tilde{\mathcal{F}}^{s}(\eta)$ that is homeomorphic to $U_\theta$.

\begin{figure}[ht]
\begin{center}
\includegraphics[scale=0.84]{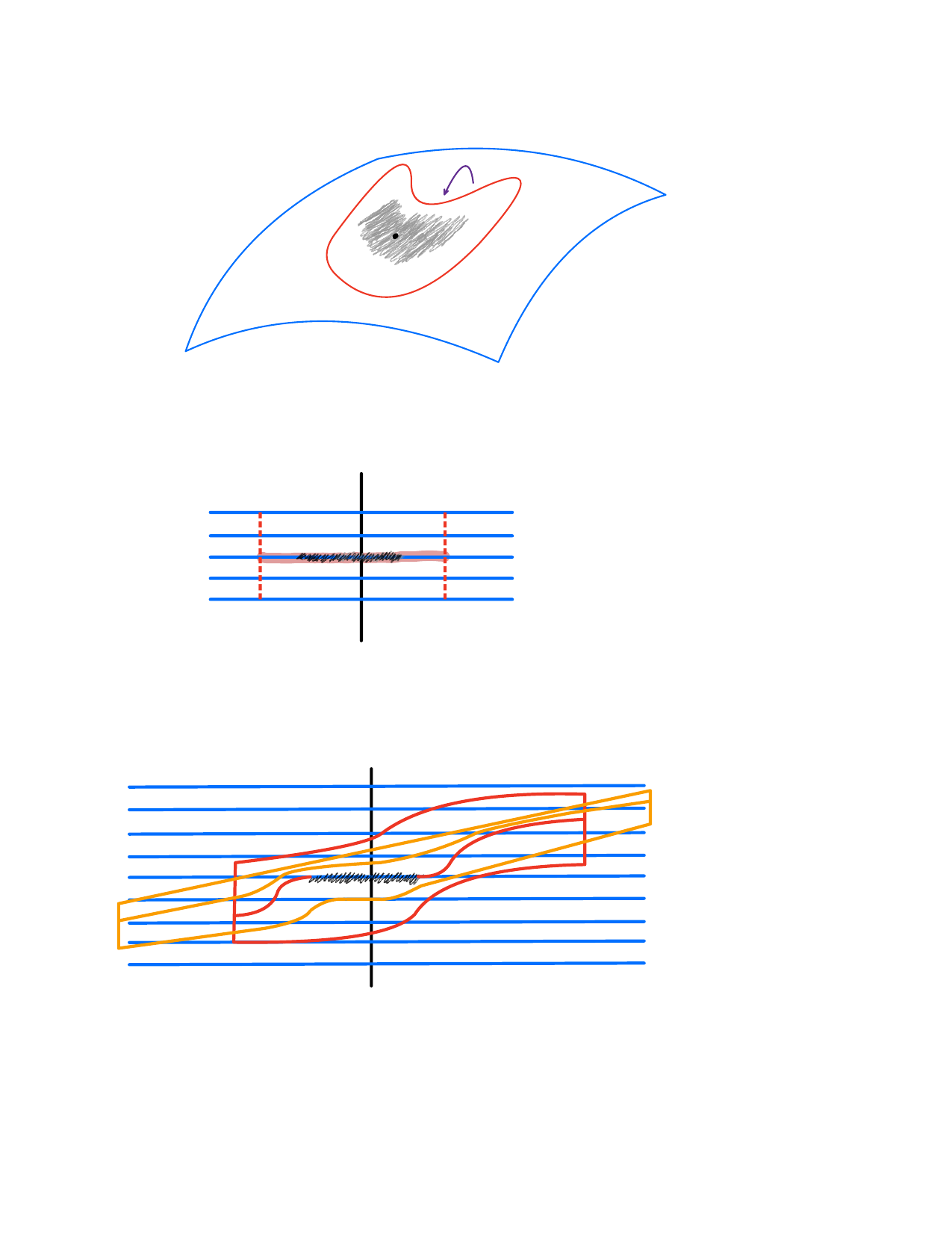}
\begin{picture}(0,0)
\put(-189,104){$I(\theta)$} 
\put(-166,100){$\theta$} 
\put(-42,104){$\mathcal{F}^s(\theta)$}
\put(-148,130){$\mathcal{P}$}
\end{picture}
\end{center}
\vspace{-0.5cm}
\caption{{\small The first return map $\mathcal{P}$ maps $U_\theta \cap \mathcal{F}^s(\theta)$ to its interior containing $I(\theta)=\mathcal{F}^s(\theta)\cap \mathcal{F}^u(\theta)$ the biasymptotic class.}}\label{fig-1}
\end{figure}

	According to Lemma \ref{vertical-neigh}, the second set is an open set in the unit tangent bundle, and according to Lemma \ref{Fol-neigh}, the third set is a $s$-foliated  continuous cross section for the geodesic flow homeomorphic to $U_{\theta} \times  V_{\delta '}(\theta)$.
	
	Moreover, by Lemma \ref{Fol-neigh}, each open relative set $\tilde{\mathcal{F}}^{s}_{U_{\theta}}(\eta)$ for $\eta \in V_{\delta '}(\theta)$ intersects $Q_{\delta, \sigma}(\xi)$ transversally at just one point for each $\xi \in U_{\theta}$. 
	
	The transversal intersections $\tilde{\mathcal{F}}^{s}_{U_{\theta}}(\eta)\cap Q_{\delta, \sigma}(\xi)$ induces a projection map
	$$ \tilde{P}^{h}: S_{U_{\theta}, \delta '}^{s} \longrightarrow U_{\theta}$$ 
	given by 
	$$\tilde{P}^{h}(\omega) = \xi (\omega) $$
	where $\xi(\omega) \in U_{\theta}$ is such that $\omega \in Q_{\delta, \sigma}(\xi (\omega))$. This map is actually one of the components of the homeomorphism
	$$f_{\theta} : S_{U_{\theta}, \delta '}^{s}\longrightarrow U_{\theta} \times  V_{\delta '}(\theta)$$
	described in Lemma \ref{Fol-neigh}. Indeed, if $\eta(\omega) \in V_{\delta '}(\theta)$ is such that $\omega \in \tilde{\mathcal{F}}^{s}_{U_{\theta}}(\eta(\omega))$, then 
	$$f_{\theta}(\omega) = (\xi(\omega), \eta(\omega))$$
	are in some sense coordinates in the product $U_{\theta} \times  V_{\delta '}(\theta)$ of a point $\omega \in S_{U_{\theta}, \delta '}^{s}$.

\begin{figure}[ht]
\begin{center}
\includegraphics[scale=0.84]{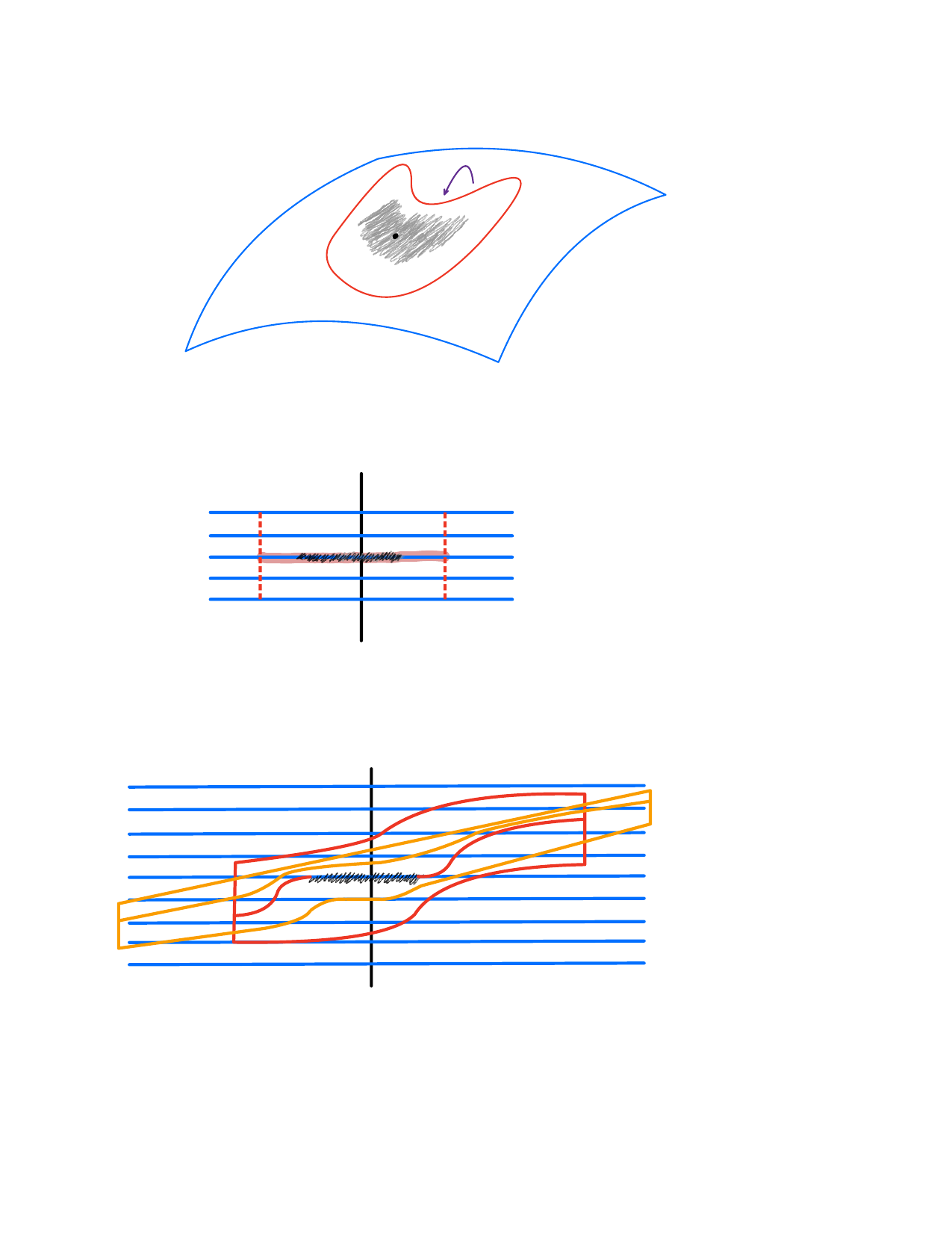}
\begin{picture}(0,0)
\put(-159,64){$I(\theta)$} 
\put(-116,64){$\theta$} 
\end{picture}
\end{center}
\vspace{-0.5cm}
\caption{{\small A schematic drawing of the region $S^s_{U_{\theta,\delta'}}$ which is the region between the dotted lines. The intersection with each leaf of $\mathcal{F}^s$ (the horizontals) through $\eta$ is the set $U_\theta(\eta)$.}}\label{fig-2}
\end{figure}

	Moreover, the restriction of $\tilde{P}^{h}$ to $\tilde{\mathcal{F}}^{s}_{U_{\theta}}(\eta)$ gives the homeomorphism between this set and $U_{\theta} \subset \tilde{\mathcal{F}}^{s}(\theta)$ considered in Lemma \ref{Fol-neigh}. 
	
	Similar objects can be defined in an open neighborhood of $\Pi(\theta) \in T_{1}M$, because $P$ is a covering map. So let $U_{\Pi(\theta)} = P(U_{\theta}) \subset \mathcal{F}^{s}(P(\theta))$, $\mathcal{F}^{s}_{U_{P(\theta)}}(P(\eta)) = P(\tilde{\mathcal{F}}^{s}_{U_{\theta}}(\eta))$, let $\it{S}_ {U_{P(\theta)}, \delta '}^{s}$ be the section $P(S_{U_{\theta}, \delta '}^{s})$, and let 
	$$ \it{P}^{h} : \it{S}_ {U_{P(\theta)}, \delta '}^{s} \longrightarrow U_{P(\theta)}$$
	be the projection induced by $\tilde{P}^{h}$. 
	
	The main result of the section is the following:
	
	\begin{proposition} \label{contraction}
		Let $(M,g)$ be a compact Riemannian manifold without conjugate points such that $(\tilde{M}, \tilde{g})$ is a quasi-convex space where geodesic rays diverge. Let $\theta \in T_{1}\tilde{M}$ such that $I(\theta)$ is contained in an open coordinate neighborhood $U_{\theta}$ of $\theta$ in $\tilde{\mathcal{F}}^{s}(\theta)$. Suppose that $P(\theta)$ is a recurrent point. Let 
		$$\mathcal{P}:  \it{S}_ {U_{\Pi(\theta)}, \delta '}^{s}\longrightarrow \it{S}_ {U_{P(\theta)}, \delta '}^{s}$$ be the Poincar\'{e} first return map of the section $\it{S}_ {U_{P(\theta)}, \delta '}^{s}$, and suppose that  
		$$\lim_{k\rightarrow \infty}\mathcal{P}^{n_{k}}(P(\theta)) = P(\theta) .$$ 
		Let $\epsilon >0$ small and let $K \subset U_{\theta}$ be a compact set containing an $\epsilon$- tubular neighborhood $U_{\epsilon}(P(I(\theta))$ of $P(\theta)$. Then there exists $n_{\theta}>0$ such that $\it{P}^{h}(\mathcal{P}^{n_{k}}(K)) \subset U_{\epsilon}(P(I(\theta))$ for every $n_{k} \geq n_{\theta}$. 
	\end{proposition}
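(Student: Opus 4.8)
The plan is to argue by contradiction, lift everything to $T_{1}\tilde{M}$, and show that a point at which the asserted contraction fails produces, in a suitable limit, a unit vector whose geodesic is bi-asymptotic to $\gamma_{\theta}$; by the generalized strip theorem (Lemma~\ref{strip}) such a vector lies in $I(\theta)$, which is the contradiction. Write $T_{n_{k}}$ for the time of the $n_{k}$-th return of $P(\theta)$ to the section, so $\phi_{T_{n_{k}}}(P(\theta))=\mathcal{P}^{n_{k}}(P(\theta))\to P(\theta)$ and $T_{n_{k}}\to\infty$ (return times to a cross section are bounded below). Because the geodesic flow preserves $\mathcal{F}^{s}$ and orbits on a common stable leaf are synchronized (for $\eta\in\tilde{\mathcal{F}}^{s}(\theta)$ one has $b^{\eta}=b^{\theta}$, so $\gamma_{\eta}(t),\gamma_{\theta}(t)$ lie on the same horosphere for every $t$), the construction of the $s$-foliated section (Lemma~\ref{Fol-neigh}) gives that $P^{h}\circ\mathcal{P}^{n_{k}}$ agrees, on the leaf part of the section through $P(\theta)$, with $P^{h}\circ\phi_{T_{n_{k}}}$ (the $n_{k}$-th return of such a point differs from $T_{n_{k}}$ by a bounded flow shift, which $P^{h}$ absorbs). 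Assuming the statement fails for the given $\epsilon$ and passing to a subsequence of the $n_{k}$, we obtain $\xi_{k}\in K\subset U_{\theta}\subset\tilde{\mathcal{F}}^{s}(\theta)$ with $P^{h}(\phi_{T_{n_{k}}}(\xi_{k}))$ at distance at least $\epsilon$ from $P(I(\theta))$.

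Lift $\xi_{k}$ to $\tilde{\xi}_{k}\in U_{\theta}$, and use recurrence of $P(\theta)$ to choose deck transformations $h_{k}$ with $h_{k}\,\phi_{T_{n_{k}}}(\theta)\to\theta$; put $\omega_{k}=h_{k}\,\phi_{T_{n_{k}}}(\tilde{\xi}_{k})$. By Lemma~\ref{asymptotic-1}(3), for all $t\geq 0$ one has $d(\gamma_{\tilde{\xi}_{k}}(t),\gamma_{\theta}(t))\leq K\,d(\gamma_{\tilde{\xi}_{k}}(0),\gamma_{\theta}(0))+C\leq D$, with $D$ depending only on $\operatorname{diam}(K)$ and on $K,C$; applying the isometry $h_{k}$ and using $h_{k}\phi_{T_{n_{k}}}(\theta)\to\theta$, the $\omega_{k}$ remain in a fixed compact subset of $T_{1}\tilde{M}$, so after a further subsequence $\omega_{k}\to\omega_{\infty}$. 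Since $\phi_{T_{n_{k}}}(\tilde{\xi}_{k})\in\tilde{\mathcal{F}}^{s}(\phi_{T_{n_{k}}}(\theta))$ and deck transformations preserve $\tilde{\mathcal{F}}^{s}$, we get $\omega_{k}\in\tilde{\mathcal{F}}^{s}(h_{k}\phi_{T_{n_{k}}}(\theta))$, and since $h_{k}\phi_{T_{n_{k}}}(\theta)\to\theta$, continuity of the stable foliation (Corollary~\ref{fol-continuity}) yields $\omega_{\infty}\in\tilde{\mathcal{F}}^{s}(\theta)$; in particular $\gamma_{\omega_{\infty}}$ is forward asymptotic to $\gamma_{\theta}$ by Lemma~\ref{asymptotic-1}(3).

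It remains to show $\omega_{\infty}\in I(\theta)$, which is where quasi-convexity enters again. Fix $s\geq 0$ and take $k$ with $T_{n_{k}}>s$; then $\phi_{-s}(\omega_{k})=h_{k}\,\phi_{T_{n_{k}}-s}(\tilde{\xi}_{k})$, and Lemma~\ref{asymptotic-1}(3) applied at the nonnegative time $T_{n_{k}}-s$, together with the isometry $h_{k}$, shows the base point of $\phi_{-s}(\omega_{k})$ is within $D$ of the base point of $\phi_{-s}(h_{k}\phi_{T_{n_{k}}}(\theta))$. Letting $k\to\infty$, and using $\phi_{-s}(h_{k}\phi_{T_{n_{k}}}(\theta))\to\phi_{-s}(\theta)$ and $\phi_{-s}(\omega_{k})\to\phi_{-s}(\omega_{\infty})$, we obtain $d(\gamma_{\omega_{\infty}}(-s),\gamma_{\theta}(-s))\leq D$ for every $s\geq 0$, hence $d_{H}(\gamma_{\omega_{\infty}}(-\infty,0],\gamma_{\theta}(-\infty,0])\leq D$ and $\gamma_{\omega_{\infty}}$ is also backward asymptotic to $\gamma_{\theta}$. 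Thus $\gamma_{\omega_{\infty}}$ is bi-asymptotic to $\gamma_{\theta}$, and by Lemma~\ref{asymptotic-1}(2), Lemma~\ref{strip} and Definition~\ref{bi-asympt-class} we conclude $\omega_{\infty}\in\tilde{\mathcal{F}}^{s}(\theta)\cap\tilde{\mathcal{F}}^{u}(\theta)=I(\theta)\subset U_{\theta}$. Finally $P(\omega_{k})=\phi_{T_{n_{k}}}(\xi_{k})$ and $\omega_{k}\to\omega_{\infty}\in I(\theta)$, so for large $k$ the point $\omega_{k}$ lies where $P^{h}\circ P$ is defined and continuous; since $P^{h}$ is the identity on $U_{P(\theta)}$ and $P(\omega_{\infty})\in P(I(\theta))\subset U_{P(\theta)}$, we get $P^{h}(\phi_{T_{n_{k}}}(\xi_{k}))=P^{h}(P(\omega_{k}))\to P(\omega_{\infty})\in P(I(\theta))$, contradicting the choice of $\xi_{k}$. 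This proves the proposition.

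The decisive step is the last one: forcing $\omega_{\infty}$ all the way into the bi-asymptotic class $I(\theta)$ rather than merely into the stable leaf $\tilde{\mathcal{F}}^{s}(\theta)$. Forward asymptoticity is automatic once $\omega_{\infty}$ lands on $\tilde{\mathcal{F}}^{s}(\theta)$, but backward asymptoticity needs the uniform control of backward rays provided by the quasi-convexity estimate of Lemma~\ref{asymptotic-1}(3), applied along the flowed segments and combined with the recurrence $h_{k}\phi_{T_{n_{k}}}(\theta)\to\theta$; and it is the generalized strip theorem (Lemma~\ref{strip}) that converts bi-asymptoticity into membership in $I(\theta)$. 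The remaining, more routine matter is the bookkeeping identifying the iterated Poincar\'{e} map with the projected time-$T_{n_{k}}$ flow map, which rests on the synchronization of stable-leaf orbits and on the construction of the $s$-foliated section.
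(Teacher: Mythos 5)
Your proposal is correct and follows essentially the same route as the paper's proof: argue by contradiction, flow the offending points forward by the return times, use the quasi-convexity estimate of Lemma \ref{asymptotic-1}(3) on the interval $[-T_{n_k},\infty)$ to extract a limit point whose geodesic is bi-asymptotic to $\gamma_\theta$, and conclude via the generalized strip theorem that this limit lies in $I(\theta)$, contradicting the assumed $\epsilon$-separation. Your write-up is in fact somewhat more careful than the paper's (explicit deck transformations, the two-sided limiting estimate, and the final continuity step for $P^h$), but the idea is identical.
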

	
	\begin{proof}
		
		The proof is by contradiction. If the statement of the proposition is not true, then we have two possibilities:
		\begin{enumerate}
			\item There exist a sequence $\omega_{i} \in K$ with $k_{i} \rightarrow +\infty$, and $m_{i} \rightarrow +\infty$,  such that the Poincar\'{e} map $\mathcal{P}^{n_{k_{i}}}$ is not defined in the forward orbit $\phi_{t}(\omega_{i})$ for $t \geq m_{i}$, 
			\item For every point $\omega \in K$ there are infinite elements of the forward orbit $\{ \phi_{t_{i(\omega)}}(\omega) \}$ such that 
			$$\mathcal{P}^{n_{k_{i(\omega)}}}(\omega) = \phi_{t_{i(\omega)}}(\omega)$$ 
			but 
			$$ \it{P}^{h}(\mathcal{P}^{n_{k_{i(\omega)}}}(\omega) ) \notin  U_{\epsilon}(P(I(\theta)). $$ 
		\end{enumerate}
		
		In any case, there exists a sequence $\tilde{\xi}_{k} \in T_{1}\tilde{M}$ of lifts of points $\xi_{k} \in K$,  such that 
		\begin{enumerate}
			\item $\tilde{\xi_{i}} \in U_{\theta} \subset \tilde{\mathcal{F}}^{s}(\theta)$, 
			\item $\phi_{t_{i}}(\tilde{\xi}_{k}) \in \phi_{t_{i}}(\tilde{\mathcal{F}}^{s}(\theta) )= \tilde{\mathcal{F}}^{s}(\phi_{t_{i}}(\theta) )$. 
			\item $P(\phi_{t_{i}}(\theta) ) = \mathcal{P}^{n_{k_{i}}}(P(\theta) )$ where $n_{k_{i}} \rightarrow +\infty$. 
			\item $\phi_{t_{i}}(\tilde{\xi}_{i}) \notin \tilde{\mathcal{F}}^{s}_{U_{\theta}}(\eta_{i})$ 
			where $\eta_{i} = \tilde{\mathcal{F}}^{s}(\phi_{t_{i}}(\theta) ) \cap V_{\delta '}(\theta)$.
		\end{enumerate}
		
		Let $\tau_{i} = \phi_{t_{i}}(\tilde{\xi}_{i})$. By Lemma \ref{asymptotic-1} item (3), we have that 
		$$ d(\gamma_{\phi_{t_{i}}(\theta)}(t) , \gamma_{\tau_{i}}(t)) \leq K d(\gamma_{\phi_{t_{i}}(\theta)}(0) , \gamma_{\tau_{i}}(0)) + K$$
		for every $t \geq -t_{i}$, where $K, C$ are the quasi-convexity constants. Therefore, up to convergent subsequences, we get a limit of the sequence $\tau_{i}\rightarrow \tau_{\infty} \in \tilde{\mathcal{F}}^{s}(\theta)$ by the continuity of the foliation $\tilde{\mathcal{F}}^{s}$ and the fact that $\phi_{t_{i}}(\theta)$ converges to $\theta$. The point $\tau_{\infty}$ does not belong to $U_{\theta}$, and the geodesic 
		$\gamma_{\tau_{\infty}}$ satisfies 
		$$ d(\gamma_{\theta}(t) , \gamma_{\tau_{\infty}}(t)) \leq K d(\gamma_{\theta}(0) , \gamma_{\tau_{\infty}}(0)) + K$$
		for every $t \in \mathbb{R}$. This is clearly a contradiction since $I(\theta)$ is disjoint from the complement of the neighborhood $U_{\theta}$. This finishes the proof of the Proposition. 
		
	\end{proof}
	
	\section{Density of periodic orbits in a local product neighborhood }
	
	The main result of the section is the following:
	
	\begin{theorem} \label{density-per}
		Let $(M,g)$ be a compact Riemannian manifold without conjugate points such that $(\tilde{M}, \tilde{g})$ is a quasi-convex space where geodesic rays diverge. Let $\theta \in T_{1}M$ be a hyperbolic point, then periodic orbits are dense in any local product open neighborhood $W \subset T_{1}M$ in the following sense: given $I(\eta) \subset W$ and $\epsilon >0$ there exists a periodic point $\eta_{\epsilon} \in W$ of the geodesic flow and $\iota \in I(\eta)$ such that $d(\eta_{\epsilon}, \iota) <\epsilon$. 
	\end{theorem}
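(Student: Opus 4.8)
The plan is to realise the periodic orbit as a fixed point of a high iterate of the Poincar\'e return map of one of the $s$--foliated cross sections of Lemma~\ref{Fol-neigh}, by closing up a recurrent orbit that returns near $I(\eta)$. First I would reduce to the case where $\eta$ itself is recurrent. The geodesic flow preserves the Liouville measure, so recurrent points are dense in $T_{1}M$ by Poincar\'e recurrence; if $\eta^{\ast}$ is a recurrent point close to a point of $I(\eta)$ then, since the sets $I(\cdot)$ are compact (the horospheres being closed) and vary semicontinuously (Corollary~\ref{rank-one-cont} and Lemma~\ref{semi-cont}, using that $I(\eta)$ is small because $W$ is a small local product neighbourhood of the hyperbolic point $\theta$), the class $I(\eta^{\ast})$ lies in an arbitrarily small tubular neighbourhood of $I(\eta)$, hence still inside $W$ and inside a small coordinate ball $U_{\eta^{\ast}}\subset\tilde{\mathcal{F}}^{s}(\eta^{\ast})$; a periodic orbit $\epsilon$--close to $I(\eta^{\ast})$ is then $2\epsilon$--close to $I(\eta)$. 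So from now on assume $\eta$ is recurrent and $I(\eta)\subset U_{\eta}\subset\tilde{\mathcal{F}}^{s}(\eta)$ with $U_{\eta}$ a small coordinate ball.

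Work in the $s$--foliated continuous cross section $\mathcal{S}$ of Lemma~\ref{Fol-neigh} built from $U_{\eta}$ and a vertical ball $V_{\delta'}(\eta)$, with the homeomorphism $f_{\eta}\colon\mathcal{S}\to U_{\eta}\times V_{\delta'}(\eta)$ and projections $\pi_{s}=P^{h}\colon\mathcal{S}\to U_{\eta}$ (stable coordinate) and $\pi_{v}\colon\mathcal{S}\to V_{\delta'}(\eta)$ (transverse coordinate); here $I(\eta)$ sits in $\mathcal{S}$ with $\pi_{v}\equiv\eta$ and with $\pi_{s}(I(\eta))$ a small compact connected subset of $U_{\eta}$. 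Fix a closed round ball $K\subset U_{\eta}$ containing a closed $\epsilon$--tubular neighbourhood of $\pi_{s}(I(\eta))$, a closed ball $\overline{V_{\rho}}\subset V_{\delta'}(\eta)$ of radius $\rho<\epsilon$ about $\eta$, and put $\mathcal{B}=f_{\eta}^{-1}(K\times\overline{V_{\rho}})$, a topological $(2n-2)$--ball which after shrinking the data lies in $W$. Let $\mathcal{P}$ be the first return map of $\mathcal{S}$, so a fixed point of an iterate $\mathcal{P}^{m}$ is a periodic point of the geodesic flow, and fix $n_{k}\to\infty$ with $\mathcal{P}^{n_{k}}(\eta)\to\eta$. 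By Proposition~\ref{contraction} and its proof --- which uses only Lemma~\ref{asymptotic-1}(3) and the continuity of $\tilde{\mathcal{F}}^{s}$, hence applies uniformly to all the stable plaques $\tilde{\mathcal{F}}^{s}_{U_{\eta}}(\zeta)$, $\zeta\in V_{\delta'}(\eta)$, that foliate $\mathcal{S}$ --- there is $n_{0}$ so that for $n_{k}\geq n_{0}$ the iterate $\mathcal{P}^{n_{k}}$ is defined on $\mathcal{B}$ and
\[
\pi_{s}\bigl(\mathcal{P}^{n_{k}}(\mathcal{B})\bigr)\subset\mathrm{int}(K).
\]
Thus the stable coordinate is contracted towards $I(\eta)$. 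In the transverse coordinate the behaviour is reversed: since the vertical fibres are topologically transverse to $\tilde{\mathcal{F}}^{s}$ and $\tilde{\mathcal{F}}^{u}$ (Theorem~\ref{Lipschitz}), $(M,g)$ has no conjugate points, geodesic rays diverge, and nearby stable plaques cannot contract faster than $\tilde{\mathcal{F}}^{s}$ itself (Lemma~\ref{asymptotic-1}(3)), the box $\mathcal{B}$ is stretched all the way across $\mathcal{S}$ in the vertical direction; concretely, for each $\xi\in K$ the map $\overline{V_{\rho}}\ni\zeta\mapsto\pi_{v}(\mathcal{P}^{n_{k}}(f_{\eta}^{-1}(\xi,\zeta)))$ sends $\partial V_{\rho}$ off $\overline{V_{\rho}}$ and covers a neighbourhood of $\eta$ with nonzero degree, once $n_{k}$ is large. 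In short, on $\mathcal{B}$ the map $\mathcal{P}^{n_{k}}$ is topologically hyperbolic.

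To produce the fixed point, set $g=\mathcal{P}^{n_{k}}$ for a large $n_{k}$ and consider, in the coordinates $f_{\eta}$, the map $\Phi\colon K\times\overline{V_{\rho}}\to\RR^{n-1}\times\RR^{n-1}$ given by $\Phi(\xi,\zeta)=(\pi_{s}(g(\xi,\zeta))-\xi,\ \pi_{v}(g(\xi,\zeta))-\zeta)$. By the stable contraction the first component of $\Phi$ is nonzero and inward--pointing on $\partial K\times\overline{V_{\rho}}$, and by the transverse stretching the second component is nonzero on $K\times\partial V_{\rho}$; a degree computation --- the incarnation of the Lefschetz fixed point theorem used here --- then gives a zero of $\Phi$ in $\mathrm{int}(\mathcal{B})$, i.e.\ a fixed point $x\in\mathcal{B}$ of $g$. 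Then $x$ is a periodic point of the geodesic flow lying in $\mathcal{B}\subset W$, and since $\pi_{s}(x)=\pi_{s}(g(x))$ is within $\epsilon$ of $\pi_{s}(I(\eta))$ while $\pi_{v}(x)\in\overline{V_{\rho}}$ is within $\rho<\epsilon$ of $\eta$, the point $x$ is within $2\epsilon$ of a point of $I(\eta)$ (absorbing the modulus of continuity of $f_{\eta}$ into $\epsilon$). As $\epsilon$ was arbitrary, this proves the theorem.

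The crux is the transverse stretching and the way it is combined with the fixed point theorem. The stable contraction is exactly Proposition~\ref{contraction}, but one must show that iterating the return map genuinely pushes the box across the vertical fibres even though $\tilde{\mathcal{F}}^{s}$ and $\tilde{\mathcal{F}}^{u}$ are not transverse (so the bi-asymptotic classes $I(\cdot)$ may fail to be points) and $\mathcal{P}$ is merely a return map not mapping $\mathcal{B}$ into itself --- so that $\mathcal{B}$ cannot be made invariant and neither a cone--field/graph--transform argument nor a contraction--mapping argument is available. This is exactly where one must exploit the continuity of the horospherical foliations (Corollary~\ref{fol-continuity}), the absence of conjugate points and the uniform divergence of geodesic rays, and run the fixed point argument in its topological (degree / Lefschetz) form.
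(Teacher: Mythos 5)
Your overall architecture (reduce to a recurrent point, work in the $s$-foliated cross section of Lemma \ref{Fol-neigh}, establish topological hyperbolicity of a high iterate of the return map on a product box, and close with a degree/Lefschetz argument) is the same as the paper's, and your stable-coordinate contraction via Proposition \ref{contraction} is exactly how the paper proceeds. But there is a genuine gap at the step you yourself flag as the crux: the ``transverse stretching.'' You take the box $K\times \overline{V_{\rho}}$ with $K$ a piece of a \emph{stable} plaque and $\overline{V_{\rho}}$ a \emph{vertical} ball, and you assert that $\mathcal{P}^{n_{k}}$ stretches the vertical factor across the section with nonzero degree, sending $\partial V_{\rho}$ off $\overline{V_{\rho}}$. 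The reasons you cite (topological transversality of vertical fibers to $\tilde{\mathcal{F}}^{s}$ and $\tilde{\mathcal{F}}^{u}$, divergence of rays, and the fact that nearby stable plaques cannot contract faster than $\tilde{\mathcal{F}}^{s}$) do not yield this. The vertical fiber is not invariant under the return map; two points of $V_{\rho}(\eta)$ lie on distinct stable plaques, but the uniform divergence hypothesis only says their orbits eventually separate at some unspecified rate, and gives no control on \emph{where} the separation shows up in the coordinates of the section at the specific return times $n_{k}$, nor the uniformity over $\xi\in K$ and $\zeta\in\partial V_{\rho}$ needed for the boundary condition in the degree computation. Without an invariant expanding factor, neither boundary condition of the fixed point lemma is verifiable.

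The paper's resolution is to choose the product decomposition differently. The expanding factor is not the vertical fiber but the plaque $\mathcal{W}^{u}(\xi)=S^{s}_{U^{s}_{\theta},\delta}\cap\tilde{\mathcal{F}}^{cu}(\xi)$, which \emph{is} preserved by the Poincar\'{e} map; its topological expansion is obtained by applying Proposition \ref{contraction} to backward iterates (via the involution $(p,v)\mapsto(p,-v)$ interchanging stable and unstable data). The vertical fiber is then the \emph{contracted} factor, but only after composing with the unstable holonomy: Lemma \ref{s-contraction} shows $Hol_{u}\circ\mathcal{P}^{n}$ is a topological contraction of $\hat{V}_{\delta(\eta)}(P(\eta))$, precisely because the stable contraction of Proposition \ref{contraction} pushes the image into a small neighborhood of $I(P(\eta))$ whose unstable plaques meet the vertical disk only in its interior (using the local product structure of Theorem \ref{local-product}). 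So the box fed to the Lefschetz lemma is $\mathcal{W}^{u}_{A^{u}_{P(\eta)},\delta}\times\Phi(V_{\tau}(P(\eta)))$, not (stable plaque)$\times$(vertical ball). To repair your proof you would need either to adopt this decomposition or to supply an independent argument that the quotient dynamics on the space of stable plaques is topologically expanding with the required boundary behavior, which is essentially equivalent to what Lemmas \ref{section-product-st} and \ref{s-contraction} provide.
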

	
	To motivate the ideas of the proof let us briefly recall the proof of the existence of periodic orbits in hyperbolic dynamics by applying the topological index theory. Let $f$ be an Anosov diffeomorphism defined in a compact manifold. Every point has a local product structure, namely, an open neighborhood that is foliated by pieces of stable and unstable sets that is homeomorphic to a rectangle. The action of the diffeomorphism in the product neigborhood stretches the unstable leaves and contracts the stable leaves, mapping the neighborhood into a band whose width tends to zero with the iterates of the map. If the point is recurrent, the orbits of the unstable fibers by recurrent times are mapped into a thin band intersecting the product neighborhood in a subretangle whose stable width is very small. This induces a map in the space of unstable leaves in the product neighborhood that is a contraction, and by Brouwer's fixed point Theorem there is a fixed unstable leaf close to the recurrent point. The same argument proceeds with backwards iterates of the diffeomorphism, yielding the existence of a fixed stable leaf close to the recurrent point. The intersection of both fixed leaves is a periodic point. 
	
	Under the assumptions of Theorem \ref{density-per} there is no hyperbolicity a priori, so there is no local product structure in the hyperbolic sense. What we have is the local product structure claimed in Theorem \ref{local-product}, that is weaker than hyperbolic local product structure since there might be nontransversal intersections between invariant leaves. Moreover, there exist a family of foliated neighborhoods of the bi-asymptotic classes of recurrent points where the dynamics acts by contractions. These neighborhoods are not quite product neighborhoods in the dynamical sense, although they are foliated by invariant leaves.  
	
	Theorem \ref{density-per} will follow from the application of Lefchetz fixed point theory applied to the action of the dynamics on these foliated neighborhoods, as we shall see in the forthcoming subsections. 
	
	Through the section we shall assume that $(M,g)$ satisfies the hypothesis of Theorem \ref{density-per}. 
	
	\subsection{Basis of foliated neighborhoods for points in a local product neighborhood}
	
	\bigskip
	
	We consider the collection of foliated sets defined in Section 4, 
	$$ S_{U_{\theta}^{s}, \delta }^{s} = \bigcup_{\eta \in V_{\delta '}(\theta) } \tilde{\mathcal{F}}^{s}_{U_{\theta}^{s}}(\eta) $$ 
	that is a cross section for the geodesic flow foliated by open relative neighborhoods of the stable sets, and 
	
	$$\Gamma_{U_{\theta}^{s}, \delta , \sigma}^{s} = \bigcup_{\mid t \mid < \sigma} \phi_{t}( S_{U_{\theta}, \delta }^{s} ) $$
	that is an open set foliated by 
	open relative neighborhoods of the stable sets. Let us define similar objects foliated by relative open subsets of unstable sets. Let $U_{\theta}^{u} \subset \tilde{\mathcal{F}}^{u}(\theta)$ be an open coordinate neighborhood of $\theta$ in $\tilde{\mathcal{F}}^{u}(\theta)$, let $ \tilde{\mathcal{F}}^{u}_{U_{\theta}^{u}}(\eta)$ be the relative open subset of $\tilde{\mathcal{F}}^{u}(\eta)$ homeomorphic to $U_{\theta}^{u}$ according to the homeomorphism defined in Lemma \ref{Fol-neigh}, let 
	
	$$ S_{U_{\theta}^{u}, \delta }^{u} = \bigcup_{\eta \in V_{\delta }(\theta) } \tilde{\mathcal{F}}^{u}_{U_{\theta}^{u}}(\eta) $$ 
	be a u-foliated cross section for the geodesic flow and let 
	$$\Gamma_{U_{\theta}^{u}, \delta , \sigma}^{u} = \bigcup_{\mid t \mid < \sigma} \phi_{t}( S_{U_{\theta}^{u}, \delta }^{u} ) $$
	be an open set foliated by 
	open relative neighborhoods of the unstable sets. 
	
	\begin{lemma} \label{foliated-int}
		Suppose that $I(\theta)$ is contained in an open coordinate neighborhood of $T_{1}\tilde{M}$ with compact closure. Then the intersection 
		$$ \bigcap_{\sigma, \delta, U_{\theta}^{s}, U_{\theta}^{u}}(\Gamma_{U_{\theta}^{s}, \delta , \sigma}^{s}\cap \Gamma_{U_{\theta}^{u
			}, \delta , \sigma}^{s})$$ 
		is $I(\theta)$. 
	\end{lemma}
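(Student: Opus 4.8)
\textbf{Proof plan for Lemma \ref{foliated-int}.}

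The plan is to prove the two inclusions separately, the easy one being $I(\theta) \subseteq \bigcap (\Gamma^s \cap \Gamma^u)$ and the substantive one being the reverse. For the easy inclusion, fix $\iota \in I(\theta) = \tilde{\mathcal{F}}^s(\theta) \cap \tilde{\mathcal{F}}^u(\theta)$ and consider any choice of parameters $\sigma, \delta, U_\theta^s, U_\theta^u$. Since $I(\theta)$ is assumed contained in an open coordinate neighborhood with compact closure, we may always assume (shrinking the parameter set over which we intersect is harmless, since the family is directed under refinement) that $U_\theta^s$ and $U_\theta^u$ are coordinate neighborhoods of $\theta$ inside $\tilde{\mathcal{F}}^s(\theta)$ and $\tilde{\mathcal{F}}^u(\theta)$ respectively that contain $I(\theta)$; this uses Lemma \ref{strip}(4) (connectedness of $I(\theta)$) together with the assumed compactness. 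Then $\iota \in U_\theta^s \subseteq S^s_{U_\theta^s,\delta}$ (taking the leaf through $\eta=\theta$, i.e.\ $\tilde{\mathcal{F}}^s_{U_\theta^s}(\theta) = U_\theta^s$), hence $\iota \in \Gamma^s_{U_\theta^s,\delta,\sigma}$; symmetrically $\iota \in \Gamma^u_{U_\theta^u,\delta,\sigma}$. This shows $\iota$ lies in every member of the directed family, hence in the intersection.

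For the reverse inclusion I would argue by contradiction. Suppose $\omega$ lies in the intersection but $\omega \notin I(\theta)$. The key geometric input is Lemma \ref{strip}: any $\omega$ whose geodesic $\gamma_\omega$ is bi-asymptotic to $\gamma_\theta$ and with $\gamma_\omega(0) \in H_\theta(0)$ lies in $H_\theta(0)\cap H_{-\theta}(0) = \Sigma(\theta)$, i.e.\ defines a point of $I(\theta)$; conversely if $\omega \in \Gamma^s_{U_\theta^s,\delta,\sigma}$ for all $\sigma,\delta$ then $\omega = \phi_{t(\omega)}(\omega')$ with $\omega'$ in a stable leaf $\tilde{\mathcal{F}}^s(\eta)$ for $\eta$ arbitrarily close to $\theta$ and $|t(\omega)|$ arbitrarily small, and likewise for the unstable side. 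The structure of the argument is: first, using that $\omega \in \Gamma^s_{U_\theta^s,\delta,\sigma}$ for a sequence of parameters shrinking to zero, conclude that $\omega$ itself lies in $\tilde{\mathcal{F}}^s(\theta)$ (the time-shift $t(\omega)$ must be $0$ because it is bounded by every $\sigma$, and the base point $\eta$ converges to $\theta$ so by continuity of $\tilde{\mathcal{F}}^s$ from Corollary \ref{fol-continuity} we get $\omega \in \tilde{\mathcal{F}}^s(\theta)$); symmetrically $\omega \in \tilde{\mathcal{F}}^u(\theta)$. Hence $\omega \in \tilde{\mathcal{F}}^s(\theta)\cap\tilde{\mathcal{F}}^u(\theta) = I(\theta)$, the desired contradiction.

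The step I expect to be the main obstacle is making rigorous the claim that membership in $\Gamma^s_{U_\theta^s,\delta,\sigma}$ for all small $\sigma,\delta$ (and all coordinate neighborhoods $U_\theta^s$) forces $\omega$ into $\tilde{\mathcal{F}}^s(\theta)$ itself rather than merely into the union of nearby stable leaves. The subtlety is that $\Gamma^s_{U_\theta^s,\delta,\sigma} = \bigcup_{|t|<\sigma}\phi_t(S^s_{U_\theta^s,\delta})$ and $S^s_{U_\theta^s,\delta} = \bigcup_{\eta\in V_\delta(\theta)}\tilde{\mathcal{F}}^s_{U_\theta^s}(\eta)$, so a priori $\omega$ could be on a leaf through some $\eta \neq \theta$ with $\eta \in V_\delta(\theta)$; one must use that as $\delta \to 0$ the only $\eta$ surviving in $\bigcap_\delta V_\delta(\theta)$ is $\theta$, and that the relative leaves $\tilde{\mathcal{F}}^s_{U_\theta^s}(\eta)$ vary continuously (Lemma \ref{Fol-neigh}) so that the intersection of these shrinking sets is exactly $\tilde{\mathcal{F}}^s_{U_\theta^s}(\theta) = U_\theta^s$, and finally that $\bigcap_{U_\theta^s} U_\theta^s$ over coordinate neighborhoods of $\theta$ reduces matters correctly. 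Once this bookkeeping with the directed family is organized — being careful that the intersection is over \emph{all} quadruples $(\sigma,\delta,U_\theta^s,U_\theta^u)$, so we may pass to any convenient cofinal sequence — the rest follows from continuity of the horospherical foliations and Lemma \ref{strip}.
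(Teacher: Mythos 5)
The paper states this lemma without proof (it appears as an unproved remark following the constructions of Section 4), so there is no argument of the authors' to compare yours against; I assess it on its own terms. Your two-inclusion strategy is sound, and the substantive direction is handled by the right mechanism: if $\omega$ lies in $\Gamma^{s}_{U^{s}_{\theta},\delta,\sigma}$ for a sequence $\sigma_n,\delta_n\to 0$, then $\omega\in\tilde{\mathcal F}^{s}(\zeta_n)$ for base points $\zeta_n=\phi_{t_n}(\eta_n)\in Q_{\delta_n,\sigma_n}(\theta)$ converging to $\theta$ (using $\phi_t(\tilde{\mathcal F}^{s}(\eta))=\tilde{\mathcal F}^{s}(\phi_t(\eta))$), and the $C^{0}$-continuity of the foliation on the compact closure of the tube (Corollary \ref{fol-continuity}) forces $\omega\in\tilde{\mathcal F}^{s}(\theta)$; symmetrically $\omega\in\tilde{\mathcal F}^{u}(\theta)$, hence $\omega\in I(\theta)$.

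The one point to tighten is your treatment of the index family of neighborhoods $U^{s}_{\theta}$. For the inclusion $I(\theta)\subseteq\bigcap$, restricting the family over which one intersects is not ``harmless'': it enlarges the intersection, so it weakens the statement rather than preserving it. Indeed, if the intersection were literally over all coordinate neighborhoods $U^{s}_{\theta}$ of $\theta$, including ones too small to contain $I(\theta)$, then $\Gamma^{s}_{U^{s}_{\theta},\delta,\sigma}$ would miss points of $I(\theta)$ far from $\theta$ and the easy inclusion (hence the lemma) would fail whenever $I(\theta)\neq\{\theta\}$. The correct reading --- consistent with the hypothesis that $I(\theta)$ lies in a coordinate neighborhood with compact closure, and with the paper's usage in Proposition \ref{contraction}, Lemma \ref{crossed-int} and Corollary \ref{basis} --- is that $U^{s}_{\theta}$, $U^{u}_{\theta}$ range only over coordinate neighborhoods containing $I(\theta)$; you should state that as part of the meaning of the lemma rather than derive it from directedness of the family. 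A second, cosmetic point: there is no single time-shift ``$t(\omega)$ bounded by every $\sigma$''; each choice of $(\sigma,\delta)$ gives a different decomposition $\omega=\phi_{t_n}(\omega'_n)$, and the conclusion comes from passing to the limit, which is what your argument actually does. With those two clarifications the proof is complete.
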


	The following remark is a consequence of Theorem \ref{local-product}.
	
	\begin{lemma} \label{crossed-int}
		Under the assumptions of Theorem \ref{density-per}, suppose that $I(\theta)$ is compact and has a local product open neighborhood $W(\theta)$ with compact closure. Then there exists $l >0$ such that 
		\begin{enumerate}
			\item For each $\eta \in W(\theta)$ there exists $\delta(\eta)>0$  such that for each $\xi \in V_{\delta(\eta)}(\eta)$ the set 
			$$\tilde{\mathcal{F}}^{s}_{U_{\eta}^{s}}(\xi) \cap \tilde{\mathcal{F}}^{cu}(\theta)$$
			is nonempty and is the bi-asymptotic class of some point provided tbat the open relative set $\tilde{\mathcal{F}}^{s}_{U_{\eta}^{s}}(\xi)$ contains an open coordinate ball of radius $\geq l$ around $\xi$. 
			\item The sets 
			$$ \Gamma_{U_{\eta}^{s}, \delta , \sigma}^{s}\cap \tilde{\mathcal{F}}^{cu}(\eta)$$ 
			are compact sets in the unstable leaf $\tilde{\mathcal{F}}^{cu}(\eta)$ containing an open relative neighborhood of $I(\eta)$ in $\tilde{\mathcal{F}}^{cu}(\eta)$.  
		\end{enumerate} 
		The same statements hold interchanging the indices $s$ and $u$ in items (1), (2). 
	\end{lemma}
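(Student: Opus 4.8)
The plan is to obtain item (1) from three ingredients: the local product property of $W(\theta)$ (Theorem \ref{local-product}), which produces a point in the intersection; the asymptoticity lemmas (Lemmas \ref{asymptotic-1} and \ref{strip}), which identify the \emph{entire} intersection as a single bi-asymptotic class; and a compactness argument over $\overline{W(\theta)}$ built on the semi-continuity of bi-asymptotic classes (Lemma \ref{semi-cont}), which produces the uniform constant $l$. Item (2) is then a bookkeeping consequence of the construction of the foliated neighborhoods in Lemma \ref{Fol-neigh}.

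For item (1): fix $\eta\in W(\theta)$ and, shrinking $\delta(\eta)$ if necessary, take $\xi\in V_{\delta(\eta)}(\eta)$ so that $\xi$ and $\theta$ both lie in $W(\theta)$, and use Theorem \ref{local-product} to pick $x\in\tilde{\mathcal{F}}^{s}(\xi)\cap\tilde{\mathcal{F}}^{cu}(\theta)$. The key claim is $\tilde{\mathcal{F}}^{s}(\xi)\cap\tilde{\mathcal{F}}^{cu}(\theta)=I(x)$. For the inclusion $\supseteq$: since $\tilde{\mathcal{F}}^{s}$ is a foliation (Corollary \ref{fol-continuity}) and $x\in\tilde{\mathcal{F}}^{s}(\xi)$, the leaves coincide, $\tilde{\mathcal{F}}^{s}(x)=\tilde{\mathcal{F}}^{s}(\xi)$; and since the geodesic flow permutes the leaves of $\tilde{\mathcal{F}}^{u}$ (a standard property of the strong horospherical foliations) and $x\in\tilde{\mathcal{F}}^{cu}(\theta)$, the flow saturations agree, $\tilde{\mathcal{F}}^{cu}(x)=\tilde{\mathcal{F}}^{cu}(\theta)$, whence $I(x)=\tilde{\mathcal{F}}^{s}(x)\cap\tilde{\mathcal{F}}^{u}(x)\subseteq\tilde{\mathcal{F}}^{s}(\xi)\cap\tilde{\mathcal{F}}^{cu}(\theta)$. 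For $\subseteq$: a point $y$ in the intersection lies on the leaf $\tilde{\mathcal{F}}^{s}(x)$, so $\gamma_y$ is forward asymptotic to $\gamma_x$ (Lemma \ref{asymptotic-1}(3), using quasi-convexity); and $y\in\tilde{\mathcal{F}}^{cu}(\theta)$ makes $\gamma_y$ backward asymptotic to $\gamma_\theta$ (Lemma \ref{asymptotic-1}(3), unstable version), as is $\gamma_x$, so $\gamma_y$ is backward asymptotic to $\gamma_x$; thus $\gamma_y$ is bi-asymptotic to $\gamma_x$, and since $\gamma_y(0)\in H_x(0)$ (the two Busemann functions coincide by Lemma \ref{asymptotic-1}(1)), Lemma \ref{asymptotic-1}(2) yields $y\in I(x)$. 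It remains to check that this $I(x)$ sits inside the relative piece $\tilde{\mathcal{F}}^{s}_{U^s_\eta}(\xi)=\tilde{\mathcal{F}}^{s}(\xi)\cap Q^{s}_{\delta,\sigma,U^s_\eta}(\eta)$: I would argue that the Hausdorff-valued map $\xi\mapsto\tilde{\mathcal{F}}^{s}(\xi)\cap\tilde{\mathcal{F}}^{cu}(\theta)$ is upper semi-continuous — by continuity of the foliations together with the fact, just proved, that its value is a bi-asymptotic class, running the mechanism of the proof of Lemma \ref{semi-cont}, while Lemma \ref{strip}(2) bounds its diameter — so its values over the compact set $\overline{W(\theta)}$ lie in a fixed bounded tube of radius, say, $l$ about $\theta$. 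Then, as soon as $\tilde{\mathcal{F}}^{s}_{U^s_\eta}(\xi)$ contains a coordinate ball of radius $\ge l$ about $\xi$, it contains $I(x)$, so $\tilde{\mathcal{F}}^{s}_{U^s_\eta}(\xi)\cap\tilde{\mathcal{F}}^{cu}(\theta)=I(x)$, a bi-asymptotic class — this is item (1). (When $W(\theta)$ is small this localization is obtained more directly by applying Lemma \ref{local-prod} at the hyperbolic point $\theta$, where $\tilde{\mathcal{F}}^{s}(\theta)$ and $\tilde{\mathcal{F}}^{cu}(\theta)$ are genuinely transverse and $\tilde{\mathcal{F}}^{s}(\xi)$ is a small perturbation of $\tilde{\mathcal{F}}^{s}(\theta)$.)

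For item (2): recall from Lemma \ref{Fol-neigh} that $\Gamma^{s}_{U^s_\eta,\delta,\sigma}=\bigcup_{|t|<\sigma}\phi_t\big(\bigcup_{\zeta\in V_\delta(\eta)}\tilde{\mathcal{F}}^{s}_{U^s_\eta}(\zeta)\big)$ is an open neighborhood of $\eta$ in $T_1\tilde M$, each $\tilde{\mathcal{F}}^{s}_{U^s_\eta}(\zeta)$ being homeomorphic to the coordinate ball $U^s_\eta\subset\tilde{\mathcal{F}}^{s}(\eta)$. Since $I(\eta)$ is compact (by hypothesis for $\eta=\theta$, by Lemma \ref{semi-cont} for $\eta$ in a small enough $W(\theta)$, and in any case automatic when $I(\eta)\subset\overline{W(\theta)}$, the only situation needed downstream) and is contained in $\tilde{\mathcal{F}}^{s}(\eta)$, one chooses $U^s_\eta$ so that $I(\eta)\subset U^s_\eta$; then $I(\eta)\subset U^s_\eta\subset\Gamma^{s}_{U^s_\eta,\delta,\sigma}$, so $\Gamma^{s}_{U^s_\eta,\delta,\sigma}\cap\tilde{\mathcal{F}}^{cu}(\eta)$ is relatively open in $\tilde{\mathcal{F}}^{cu}(\eta)$ and contains $I(\eta)$. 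Its closure is compact, because $U^s_\eta$, $V_\delta(\eta)$ and $\{|t|\le\sigma\}$ all have compact closure, so $\overline{\Gamma^{s}_{U^s_\eta,\delta,\sigma}}$ is compact and hence so is the closure of the intersection; this is the compact set in the statement, with the stated relative open neighborhood of $I(\eta)$ in its interior. The $u$-versions are identical, using the $u$-foliated cross sections of Lemma \ref{Fol-neigh} and the symmetric parts of Lemmas \ref{asymptotic-1} and \ref{strip}.

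The step I expect to be the main obstacle is the uniformity of $l$ in item (1): at the non-hyperbolic points of $W(\theta)$ the leaves $\tilde{\mathcal{F}}^{s}$ and $\tilde{\mathcal{F}}^{cu}$ need not be transverse, so the quantitative transversality statement of Lemma \ref{local-prod} cannot be invoked verbatim across all of $W(\theta)$; the localization of the intersection must instead be extracted from the quasi-convexity estimate of Lemma \ref{asymptotic-1}(3), the diameter bound of Lemma \ref{strip}(2), and the semi-continuity of Lemma \ref{semi-cont}, all made uniform through the compactness of $\overline{W(\theta)}$. A minor additional point is that $I(\eta)$ must be seen to be compact for the relevant $\eta$ and not only for $\theta$, which is again where Lemma \ref{semi-cont} enters.
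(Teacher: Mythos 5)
Your proposal is correct and follows essentially the same route as the paper's proof: nonemptiness from the local product structure of $W(\theta)$, identification of the full intersection as a single bi-asymptotic class, uniformity of $l$ via the semi-continuity of $\eta\mapsto I(\eta)$ combined with a finite cover of the compact set $\overline{V_{\delta(\eta)}(\eta)}$, and for item (2) the observation that $I(\eta)$ stays away from the boundary of $\Gamma^{s}_{U^s_\eta,\delta,\sigma}$. In fact you supply more detail than the paper does on the step identifying $\tilde{\mathcal{F}}^{s}(\xi)\cap\tilde{\mathcal{F}}^{cu}(\theta)$ with $I(x)$ via Lemma \ref{asymptotic-1}, which the paper simply asserts.
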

	
	\begin{proof}
		The proof of item (1) is straightforward from the assumptions. Since $W(\theta)$ is an open set, given $\eta \in W(\theta)$ there exists $\delta(\eta)>0$ such that the closure of $V_{\delta(\eta)}(\eta)$ is contained in $W(\theta)$. Hence, each stable set $\tilde{\mathcal{F}}^{s}(\xi)$ for $\xi \in V_{\delta(\eta)}(\eta)$ must intersect $\tilde{\mathcal{F}}^{cu}(\theta)$ since $W(\theta)$ is a product neighborhood. Therefore, if we consider a sufficiently large (but still small) coordinate ball $U_{\xi}^{s}$ around $\xi$ in $\tilde{\mathcal{F}}^{s}(\xi)$ we must have that 
		$$U_{\xi}^{s} \cap \tilde{\mathcal{F}}^{cu}(\theta) \neq \emptyset $$ 
		consists of a compact, small set $I(\eta(\xi), \theta)$ that is a bi-asymptotic class, and by the semi-continuity of bi-asymptotic classes for any given open neighborhood $E(I(\xi(\theta), \theta))$ there exists an open neighborhood $A(\xi)$ such that for every point $\tau \in A(\xi)$ the intersection 
		$$ \tilde{\mathcal{F}}^{s}(\tau)\cap \tilde{\mathcal{F}}^{cu}(\theta)$$
		is contained in $E(I(\xi(\theta), \theta))$. By compactness, we can cover the closure of $V_{\delta(\eta)}(\eta)$ by a finite number of the neighborhoods $A(\xi)$. This implies that the size of the relative open neighborhoods $U_{\xi}^{s}$ is bounded above uniformly by some small constant as claimed.
		
		The compactness of the set 
		$$ \Gamma_{U_{\eta}^{s}, \delta , \sigma}^{s}\cap \tilde{\mathcal{F}}^{cu}(\eta)$$ is already a consequence of the argument used to prove item (1), because $W(\theta)$ is a product neighborhood for every $\eta \in W(\theta)$. The fact that it contains an open relative neighborhood of $I(\eta)$ follows from the following observation: the set 
		$$ \cup_{\xi \in \partial V_{\delta(\eta)}(\eta)} \tilde{\mathcal{F}}^{s}(\xi) \subset \partial \Gamma_{U_{\eta}^{s}, \delta , \sigma}^{s}$$ 
		is disjoint from $I(\eta)$, since $I(\eta) = \tilde{\mathcal{F}}^{s}(\eta)\cap \tilde{\mathcal{F}}^{u}(\eta)$ is in the interior of $\Gamma_{U_{\eta}^{s}, \delta , \sigma}^{s}$. Since $I(\eta)$ and $\partial \Gamma_{U_{\eta}^{s}, \delta , \sigma}^{s} \cap \tilde{\mathcal{F}}^{cu}(\eta) $ are compact sets, there exists an open relative neighborhood of $I(\eta)$ in $\tilde{\mathcal{F}}^{cu}(\theta)$ contained in $ \Gamma_{U_{\eta}^{s}, \delta , \sigma}^{s}\cap \tilde{\mathcal{F}}^{cu}(\eta)$ as we claimed.

	\end{proof}
	
	\begin{corollary}\label{basis}
		Under the assumptions of Lemma \ref{crossed-int} the collection of neighborhoods 
		$$  \Gamma_{U_{\eta}^{s}, \delta , \sigma}^{s}\cap  \Gamma_{U_{\eta}^{u}, \hat{\delta} , \hat{\sigma}}^{s}$$
		is a basis of open neighborhoods of $I(\eta)$ for every $\eta \in W(\theta)$. 
	\end{corollary}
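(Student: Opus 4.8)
The plan is to verify the two defining properties of a neighborhood basis. Fix $\eta\in W(\theta)$. By Lemma~\ref{crossed-int}, $I(\eta)$ is a compact set contained in an open coordinate neighborhood of $T_{1}\tilde{M}$ with compact closure, so all the constructions of Section~4 apply with base point $\eta$; in particular we may, and do, take the coordinate neighborhoods $U_{\eta}^{s}\subset\tilde{\mathcal{F}}^{s}(\eta)$ and $U_{\eta}^{u}\subset\tilde{\mathcal{F}}^{u}(\eta)$ large enough to contain $I(\eta)$, and the hypotheses of Lemma~\ref{foliated-int} hold with $\eta$ in place of $\theta$.

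\emph{Step 1: each member is an open neighborhood of $I(\eta)$, and the collection is directed.} By Lemma~\ref{Fol-neigh}, $\Gamma_{U_{\eta}^{s},\delta,\sigma}^{s}$ is an open set, and taking $\xi=\eta$ and $t=0$ in its definition shows that it contains the piece $\tilde{\mathcal{F}}^{s}_{U_{\eta}^{s}}(\eta)=U_{\eta}^{s}$ of $\tilde{\mathcal{F}}^{s}(\eta)$, hence contains $I(\eta)$; symmetrically $\Gamma_{U_{\eta}^{u},\hat{\delta},\hat{\sigma}}^{u}$ is an open neighborhood of $I(\eta)$. Thus every intersection $G=\Gamma_{U_{\eta}^{s},\delta,\sigma}^{s}\cap\Gamma_{U_{\eta}^{u},\hat{\delta},\hat{\sigma}}^{u}$ is an open neighborhood of $I(\eta)$. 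Shrinking any of $U_{\eta}^{s},U_{\eta}^{u},\delta,\sigma,\hat{\delta},\hat{\sigma}$ shrinks the corresponding factor, as is immediate from the definitions of $Q_{\delta,\sigma}$, $S^{s}$ and $\Gamma^{s}$; hence the collection $\mathcal{B}$ of all such $G$ is directed downward (any two members contain a common third, obtained by intersecting the coordinate neighborhoods and taking minima of the remaining parameters). Fixing one member $G_{0}$ and passing to the cofinal subcollection $\{G\in\mathcal{B}:G\subseteq G_{0}\}$ — which by cofinality has the same intersection, namely $I(\eta)$ by Lemma~\ref{foliated-int} — we may furthermore, using the continuity of the foliations $\tilde{\mathcal{F}}^{s},\tilde{\mathcal{F}}^{u}$ from Corollary~\ref{fol-continuity} together with the explicit form of $\Gamma^{s},\Gamma^{u}$, arrange that the members are properly nested: for every $G$ there is $G'$ with $\overline{G'}\subseteq G$. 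In particular all the $\overline{G}$ are compact, being closed subsets of $\overline{G_{0}}$.

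\emph{Step 2: a compactness argument.} Let $\mathcal{O}$ be any open set with $I(\eta)\subseteq\mathcal{O}$ and suppose, for a contradiction, that $G\not\subseteq\mathcal{O}$ for every $G$ in our cofinal, properly nested subcollection. Then each $\overline{G}\setminus\mathcal{O}$ is a nonempty compact set, and by directedness the family $\{\overline{G}\setminus\mathcal{O}\}$ has the finite intersection property, so its total intersection contains some point $x$. For every $G$, proper nesting yields $G'$ with $\overline{G'}\subseteq G$, and $x\in\overline{G'}$, whence $x\in G$; therefore $x\in\bigcap_{G}G=I(\eta)\subseteq\mathcal{O}$, contradicting $x\notin\mathcal{O}$. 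Hence some member of $\mathcal{B}$ is contained in $\mathcal{O}$, and $\mathcal{B}$ is a basis of open neighborhoods of $I(\eta)$ for every $\eta\in W(\theta)$.

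The only point that is not purely formal is the upgrade from the equality $\bigcap\mathcal{B}=I(\eta)$ supplied by Lemma~\ref{foliated-int} to the basis property; I expect the bookkeeping in Step~1 — choosing the parameters so that members are properly nested, so that the intersection of the closures still equals $I(\eta)$ — to be the main, though routine, obstacle. Everything else is a direct appeal to Lemmas~\ref{Fol-neigh}, \ref{foliated-int} and \ref{crossed-int}.
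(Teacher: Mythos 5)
Your proposal is correct and follows essentially the same route as the paper: both arguments reduce the basis property to the facts that each $\Gamma_{U_{\eta}^{s},\delta,\sigma}^{s}\cap\Gamma_{U_{\eta}^{u},\hat{\delta},\hat{\sigma}}^{u}$ is an open neighborhood of $I(\eta)$ with compact closure and that the total intersection of the family is $I(\eta)$ (Lemmas \ref{crossed-int} and \ref{foliated-int}), and then conclude by a nested-compact-sets argument. Your Step 2 merely makes explicit the finite-intersection-property reasoning that the paper compresses into a brief appeal to the semi-continuity of $\eta\mapsto I(\eta)$.
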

	
	\begin{proof}
		The proof is straightforward from Lemma \ref{crossed-int}. Indeed, notice that when $\sigma, \hat{\sigma}, \delta, \hat{\delta}$ tend to $0$, the intersections $\Gamma_{U_{\eta}^{s}, \delta , \sigma}^{s}\cap  \Gamma_{U_{\eta}^{u}, \hat{\delta} , \hat{\sigma}}^{s}$ converge to $U_{\eta}^{s} \cap U_{\eta}^{u}$ that contains $I(\eta)$. Now, if we take the intersection over all open relative neighborhoods $U_{\eta}^{s}$ of $\eta $ in $\tilde{\mathcal{F}}^{s}(\eta)$ and open relative neighborhoods $U_{\eta}^{u}$ in $\tilde{\mathcal{F}}^{u}(\eta)$, what we get is exactly $I(\eta)$. By Lemma \ref{crossed-int}, the closures of the intersections $\Gamma_{U_{\eta}^{s}, \delta , \sigma}^{s}\cap  \Gamma_{U_{\eta}^{u}, \hat{\delta} , \hat{\sigma}}^{s}$ are compact and contain open neighborhoods of $I(\eta)$. 
		By the semi-continuity of $I(\eta)$ with respect to $\eta$, every open neighborhood of $I(\eta)$ contains some $\Gamma_{U_{\eta}^{s}, \delta , \sigma}^{s}\cap  \Gamma_{U_{\eta}^{u}, \hat{\delta} , \hat{\sigma}}^{s}$ for small parameters $\sigma, \delta, \hat{\sigma}, \hat{\delta}$. This shows that the collection of the above sets form a basis of open sets for $I(\theta)$. 
	\end{proof}
	
	\subsection{Basis of foliated neighborhoods in cross sections}
	
	\bigskip
	
	In this subsection we restrict the results of the previous subsection to the cross section $S_{U_{\theta}^{s}, \delta }^{s}$ that is foliated by stable sets. Let 
	$$ \mathcal{W}^{u}(\eta) = S_{U_{\theta}^{s}, \delta }^{s}\cap \tilde{\mathcal{F}}^{cu}(\eta)$$
	for $\eta \in S_{U_{\theta}^{s}, \delta }^{s}$. Notice that $\mathcal{W}^{u}(\eta)$ is preserved by the Poincar\'{e} map of the section, and that $\mathcal{W}^{u}(\eta)$ might not be a subset of  $\tilde{\mathcal{F}}^{u}(\eta)$. The sets $\mathcal{W}^{u}(\eta)$, $\eta \in S_{U_{\theta}^{s}, \delta }^{s}$, define a continuous foliation of an open, relative subset of $S_{U_{\theta}^{s}, \delta }^{s}$ by continuous, $n$-dimensional leaves such that 
	$$ I(\eta) = \tilde{\mathcal{F}}^{s}(\eta) \cap \mathcal{W}^{u}(\eta)$$ 
	for every $\eta \in S_{U_{\theta}^{s}, \delta }^{s}$. 

Let 
	$$\Phi : \Gamma_{U_{\theta}^{s}, \delta , \sigma}^{s} \longrightarrow S_{U_{\theta}^{s}, \delta }^{s}$$ 
	be the projection in $S_{U_{\theta}^{s}, \delta }^{s}$ along the orbits of the geodesic flow. This map is close to the identity since both sets are close to $\theta$.  

Given $\eta \in S_{U_{\theta}^{s}, \delta }^{s}$, let 
$$ \mathcal{W}^{u}_{U^{u}_{\eta},\delta}(\eta) = \Phi ( \tilde{\mathcal{F}}^{u}_{U^{u}(\eta)} ) \subset \mathcal{W}^{u}(\eta)$$
\begin{lemma} \label{section-product-st}
		Under the assumptions of Lemma \ref{crossed-int}, there exists a cross section $S_{U_{\theta}^{s}, \delta }^{s}$ such that each set $$\tilde{\mathcal{F}}^{s}(\eta) \cap \mathcal{W}^{u}(\xi) \neq \emptyset$$
		 for every $\eta \in S_{U_{\theta}^{u}, \delta }^{s}$. 	Moreover, for each $\eta \in S_{U_{\theta}^{u}, \delta }^{s}$, the sets 
		$$S_{U_{\theta}^{s}, \delta }^{s}\cap  \Gamma_{U_{\eta}^{s}, \delta , \sigma}^{s}\cap \Gamma_{U_{\eta}^{s}, \delta , \sigma}^{u}$$ 
		form a basis of relative open sets of $I(\eta)$ in the section $S_{U_{\theta}^{s}, \delta }^{s}$. 
		
		The following assertions hold as well:
		\begin{enumerate}
			\item given $\eta \in S_{U_{\theta}^{s}, \delta }^{s}$, there exists $\delta(\eta)>0$ and an open relative neighborhood of $I(\eta)$,  $U^{u}_{\eta} \subset \tilde{\mathcal{F}}^{u}(\eta)$ such that the set 
			$$\bar{S}_{U_{\eta}^{u},\delta(\eta)}^{u} (\eta)= S_{U_{\theta}^{s}, \delta}^{s}\cap  \Gamma_{U_{\eta}^{u}, \delta(\eta) , \sigma}^{u}$$ 
			is homeomorphic to $\mathcal{W}^{u}_{U_{\theta}^{u}, \delta }(\eta) \times V_{\delta(\eta)}(\eta)$; 
			\item  The intersection  
			$$ \bar{S}_{U_{\theta}^{u},\delta(\eta)}^{u} (\eta) \cap \tilde{\mathcal{F}}^{s}(\eta)$$ contains an open relative neighborhood $U^{s}_{\delta(\eta)}(I(\eta))$ of $I(\eta)$ in $ \tilde{\mathcal{F}}^{s}(\eta)$. 
		\end{enumerate}

	\end{lemma}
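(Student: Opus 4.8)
The plan is to shrink the cross section $S^{s}_{U^{s}_{\theta},\delta}$ until the ambient local product structure of Theorem~\ref{local-product} restricts to a product-like structure transverse to the stable foliation inside the section, and then read off the four assertions one by one. First I would fix $\theta$ together with a local product open neighborhood $W(\theta)$ of $I(\theta)$ with compact closure, and invoke Lemma~\ref{crossed-int} to obtain the uniform lower bound $l>0$ on the radii of the coordinate balls $U^{s}_{\xi}$, $U^{u}_{\xi}$ needed for the traces $\tilde{\mathcal{F}}^{s}_{U^{s}_{\xi}}(\cdot)\cap\tilde{\mathcal{F}}^{cu}(\theta)$ (and the symmetric ones) to be bi-asymptotic classes contained in any prescribed tubular neighborhood of the corresponding $I(\cdot)$. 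Using Corollary~\ref{basis}, the continuity of $\eta\mapsto I(\eta)$ at hyperbolic points (Corollary~\ref{rank-one-cont}) and its semi-continuity in general (Lemma~\ref{semi-cont}), I would then choose the parameters $U^{s}_{\theta}$, $\delta$, $\sigma$ defining $S^{s}_{U^{s}_{\theta},\delta}$ so small that $S^{s}_{U^{s}_{\theta},\delta}\subset W(\theta)$, so that every $I(\eta)$ with $\eta\in S^{s}_{U^{s}_{\theta},\delta}$ lies in the interior of the section at distance bounded below from the lateral boundary $\bigcup_{\xi\in\partial V_{\delta}(\theta)}\tilde{\mathcal{F}}^{s}_{U^{s}_{\theta}}(\xi)$, and so that the coordinate balls $U^{s}_{\eta}$, $U^{u}_{\eta}$ of radius $l$ are still much smaller than $U^{s}_{\theta}$.

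With these choices the first assertion is almost formal. Given $\eta,\xi\in S^{s}_{U^{s}_{\theta},\delta}\subset W(\theta)$, the local product property yields $\tilde{\mathcal{F}}^{s}(\eta)\cap\tilde{\mathcal{F}}^{cu}(\xi)\neq\emptyset$, and by Lemma~\ref{crossed-int} this intersection is a bi-asymptotic class contained in an arbitrarily thin tube around $I(\xi)$, hence, by the calibration above, inside the section; since $\mathcal{W}^{u}(\xi)=S^{s}_{U^{s}_{\theta},\delta}\cap\tilde{\mathcal{F}}^{cu}(\xi)$ is precisely the trace of $\tilde{\mathcal{F}}^{cu}(\xi)$ on the section, this gives $\tilde{\mathcal{F}}^{s}(\eta)\cap\mathcal{W}^{u}(\xi)\neq\emptyset$. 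For the basis statement I would note that $I(\eta)\subset S^{s}_{U^{s}_{\theta},\delta}$ and that the section is transverse to the flow, so that the basis of neighborhoods of $I(\eta)$ in $T_{1}\tilde M$ of the form $\Gamma^{s}_{U^{s}_{\eta},\delta,\sigma}\cap\Gamma^{u}_{U^{u}_{\eta},\hat\delta,\hat\sigma}$ supplied by Corollary~\ref{basis}, intersected with $S^{s}_{U^{s}_{\theta},\delta}$, produces a basis of relative open neighborhoods of $I(\eta)$ in the section; on the section the flow-direction parameters are redundant, which is why they collapse to a single parameter in the statement.

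For item (1) I would use the $u$-version of Lemma~\ref{Fol-neigh}: the set $\Gamma^{u}_{U^{u}_{\eta},\delta(\eta),\sigma}$ is a $u$-foliated open neighborhood of $\eta$ homeomorphic to $U^{u}_{\eta}\times V_{\delta(\eta)}(\eta)\times(-\sigma,\sigma)$, the last factor being the flow parameter. The projection $\Phi$ along flow orbits onto $S^{s}_{U^{s}_{\theta},\delta}$ restricts to a homeomorphism of $\Gamma^{u}_{U^{u}_{\eta},\delta(\eta),\sigma}$ onto $\bar S^{u}_{U^{u}_{\eta},\delta(\eta)}(\eta)$ that kills the flow factor and carries $U^{u}_{\eta}$ onto $\mathcal{W}^{u}_{U^{u}_{\eta},\delta}(\eta)=\Phi(\tilde{\mathcal{F}}^{u}_{U^{u}(\eta)})$, whence $\bar S^{u}_{U^{u}_{\eta},\delta(\eta)}(\eta)\cong\mathcal{W}^{u}_{U^{u}_{\eta},\delta}(\eta)\times V_{\delta(\eta)}(\eta)$, openness in the section following from invariance of domain exactly as in Lemma~\ref{Fol-neigh}. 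For item (2), take $\omega\in\tilde{\mathcal{F}}^{s}(\eta)$ close to $I(\eta)$; by Theorem~\ref{Lipschitz} and Corollary~\ref{fol-continuity} the leaf $\tilde{\mathcal{F}}^{u}(\omega)$ meets the vertical ball $V_{\delta(\eta)}(\eta)$ at a single point $\xi(\omega)$ depending continuously on $\omega$, and for $\omega\in I(\eta)$ one has $\tilde{\mathcal{F}}^{u}(\omega)=\tilde{\mathcal{F}}^{u}(\eta)$, so $\xi(\omega)=\eta$; by continuity and compactness of $I(\eta)$ there is a relative neighborhood $U^{s}_{\delta(\eta)}(I(\eta))$ of $I(\eta)$ in $\tilde{\mathcal{F}}^{s}(\eta)$ on which $\xi(\omega)\in V_{\delta(\eta)}(\eta)$ and $\omega\in\tilde{\mathcal{F}}^{u}_{U^{u}_{\eta}}(\xi(\omega))$ modulo flow, the radius $l$ of Lemma~\ref{crossed-int} guaranteeing that $U^{u}_{\eta}$ may be taken large enough while remaining small; that is, $\omega\in\bar S^{u}_{U^{u}_{\eta},\delta(\eta)}(\eta)$.

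I expect the main obstacle to be the bookkeeping of \emph{which intersection points remain inside the bounded slab} $S^{s}_{U^{s}_{\theta},\delta}$: the ambient local product property only guarantees that a stable leaf and a center-unstable leaf meet \emph{somewhere} in $W(\theta)$, not inside the prescribed section, so a priori an intersection point could escape through the lateral boundary of the slab. The remedy is exactly the quantitative control from Lemma~\ref{crossed-int}, namely the uniform lower bound $l$ on the needed coordinate radii together with the fact that the resulting bi-asymptotic classes are trapped in arbitrarily thin tubes around $I(\cdot)$, combined with the semi-continuity of $\eta\mapsto I(\eta)$ from Lemma~\ref{semi-cont}; taking $U^{s}_{\theta}$ large relative to $l$ and $\delta,\sigma$ small then forces every relevant intersection deep into the interior of the section. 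Everything else reduces to invariance of domain and the continuity of the horospherical foliations from Corollary~\ref{fol-continuity}.
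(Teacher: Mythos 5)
Your proposal is correct and follows essentially the same route as the paper: the paper's (very terse) proof is exactly "intersect the $u$-foliated neighborhoods $\Gamma^{u}_{U^{u}_{\eta},\delta(\eta),\sigma}$ of Lemma \ref{crossed-int} with the section and identify the result with the flow-projection $\Phi$ of those neighborhoods," which is what you do, with Corollary \ref{basis} and invariance of domain supplying the basis and product statements. Your additional calibration of the section inside $W(\theta)$ via the uniform radius $l$ and the semi-continuity of $I(\cdot)$ is a detail the paper leaves implicit, not a different argument.
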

	
	\begin{proof}
		The proof just follows from Lemma \ref{crossed-int} by intersecting the neighborhood $ \Gamma_{U_{\eta}^{u}, \delta , \sigma}^{u}$ with the section $S_{U_{\theta}^{s}, \delta }^{s}$. Notice that this intersection coincides with the projection of $ \Gamma_{U_{\eta}^{u}, \delta , \sigma}^{u}$ into the section 
$S_{U_{\theta}^{s}, \delta }^{s}$ by the map $\Phi$. 
	\end{proof}

	\subsection{The action of the Poincar\'{e} map on the basis of foliated neighborhoods in a cross section}

	\bigskip
	
	Now let us look at the projection of all the sets defined in the previous subsections by the map $P : T_{1}\tilde{M} \longrightarrow T_{1}M$. We shall use the same notations for the projections of the s-foliated sets and the u-foliated sets defined before, just changing $\theta $ by $P(\theta)$. 
	
	Before stating the main result of the subsection let us introduce some notations. Let 
	$$\Phi : \Gamma_{U_{P(\eta)}^{s}, \delta , \sigma}^{s} \longrightarrow S_{U_{P(\theta)}^{s}, \delta }^{s}$$ 
	be the projection in $S_{U_{P(\theta)}^{s}, \delta }^{s}$ along the orbits of the geodesic flow. This map is close to the identity since both sets are close to $P(\theta)$. We keep the same notation for the map $\Phi$ in $T_{1}\tilde{M}$ defined in the previous subsection. Let 
	$$ \mathcal{P} : S_{U_{P(\theta)}^{s}, \delta }^{s} \longrightarrow S_{U_{P(\theta)}^{s}, \delta }^{s}$$ 
	be the Poincar\'{e} map of the section $S_{U_{P(\theta)}^{s}, \delta }^{s}$. 
	
	For $P(\eta) \in S_{U_{P(\theta)}^{s}, \delta }^{s}$, $\delta(\eta)>0$ as in item (1) of Lemma \ref{section-product-st}, let 
	$$ \hat{V}_{\delta(\eta)}(\eta) = \Phi (V_{\delta(\eta)}(\eta) ) $$ 
	that is diffeomorphic to $V_{\delta(\eta)}(\eta)$ for $\delta$ small enough. The set $\bar{S}_{U_{\theta}^{u},\delta(\eta)}^{u} (\eta)$ can be thought as a fibration by pieces of unstable sets with base at $\hat{V}_{\delta(\eta)}(\eta)$. This is a more precise description of the homeomorphism in item (1) of Lemma \ref{section-product-st}. 
	
	Let $\hat{V}_{\delta(\eta)}(P(\eta)) = P(\hat{V}_{\delta(\eta)}(\eta))$, let  $\bar{S}_{U_{P(\theta)}^{u},\delta(\eta)}^{u} (P(\eta)) =P(\bar{S}_{U_{\theta}^{u},\delta(\eta)}^{u} (\eta))$, and let   $\bar{S}_{U_{P(\theta)}^{s},\delta(\eta)}^{s} (P(\eta)) =P(\bar{S}_{U_{\theta}^{s},\delta(\eta)}^{s} (\eta))$.

	Let $$Hol_{u} : \bar{S}_{U_{P(\theta)}^{u},\delta(\eta)}^{u} (P(\eta)) \longrightarrow \hat{V}_{\delta(\eta)}(P(\eta)) $$ 
	be the projection of $\xi \in \bar{S}_{U_{P(\theta)}^{u},\delta(\eta)}^{u} (P(\eta))$ along the leaf $\mathcal{W}^{u}(P(\xi))$.  Let 
	$$Hol_{s} : \bar{S}_{U_{P(\eta)}^{s},\delta(\eta)}^{s}  \longrightarrow \hat{V}_{\delta(\eta)}(P(\eta)) $$ 
	be the projection along the stable leaves of $\mathcal{F}^{s}$ in the section $$\bar{S}_{U_{P(\eta)}^{s},\delta(\eta)}^{s}= \Phi(S_{U_{P(\eta)}^{s},\delta(\eta)}^{s})\subset S_{U_{P(\theta)}^{s},\delta}^{s}.$$
	\begin{lemma} \label{s-contraction}
		Suppose that $P(\eta) \in S_{U_{P(\theta)}^{s}, \delta }^{s}$ is a recurrent point for the Poincar\'{e} map $\mathcal{P}$. 
		Then there exists $n>0$ such that the map 
		$$Hol_{u} \circ \mathcal{P}^{n}: \hat{V}_{\delta(\eta)}(P(\eta)) \longrightarrow \hat{V}_{\delta(\eta)}(P(\eta))$$ 
		is a (topological) contraction. Similarly, there exists $m<0$ such that the map  
		$$Hol_{s} \circ \mathcal{P}^{m}: \hat{V}_{\delta(\eta)}(P(\eta) )\longrightarrow \hat{V}_{\delta(\eta)}(P(\eta))$$ 
		is a topological contraction. 
	\end{lemma}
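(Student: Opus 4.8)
The plan is to transcribe, to the cross section and the foliation $\{\mathcal{W}^{u}(\cdot)\}$, the contradiction argument that proves Proposition \ref{contraction}. Recall that under the standing hypotheses (Lemmas \ref{crossed-int} and \ref{section-product-st}) the bi-asymptotic class $I(\eta)$ is compact and contained in coordinate neighborhoods $U_{\eta}^{s}\subset\tilde{\mathcal{F}}^{s}(\eta)$ and $U_{\eta}^{u}\subset\tilde{\mathcal{F}}^{u}(\eta)$, and that, $P(\eta)$ being recurrent for $\mathcal{P}$, there is a sequence $n_{k}\to\infty$ of return times with $\mathcal{P}^{n_{k}}(P(\eta))\to P(\eta)$ in the section. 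Since $\mathcal{P}$ carries $\mathcal{W}^{u}$-leaves to $\mathcal{W}^{u}$-leaves and $\hat{V}_{\delta(\eta)}(P(\eta))$ is, by the product structure of $S_{U_{\theta}^{s},\delta}^{s}$, a transversal meeting each relevant $\mathcal{W}^{u}$-leaf once, the map $v\mapsto Hol_{u}(\mathcal{P}^{n}(v))$ on $\hat{V}_{\delta(\eta)}(P(\eta))$ is the map induced by $\mathcal{P}^{n}$ on the space of $\mathcal{W}^{u}$-leaves: the position of $\mathcal{P}^{n}(v)$ along its leaf is quotiented out, so the expansion of $\mathcal{P}^{n}$ in the $\mathcal{W}^{u}$-direction plays no role, and what must be controlled is the separation, measured along $\hat{V}_{\delta(\eta)}(P(\eta))$, between $\mathcal{W}^{u}$-leaves near $\mathcal{W}^{u}(\eta)$ — a separation transverse to $\mathcal{W}^{u}$, hence essentially of strong-stable type, which is precisely what Proposition \ref{contraction} shows $\mathcal{P}^{n_{k}}$ drives to $0$. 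Using that $Hol_{u}$ moves points along $\mathcal{W}^{u}$-leaves and $\mathcal{P}$ preserves that foliation, $(Hol_{u}\circ\mathcal{P}^{n})^{j}(v)=Hol_{u}(\mathcal{P}^{jn}(v))$ wherever defined, so it suffices to control a single large iterate.

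To do so, fix $\epsilon>0$ with $\epsilon\ll\delta(\eta)$ and suppose, for contradiction, that for arbitrarily large $n_{k}$ the image $Hol_{u}(\mathcal{P}^{n_{k}}(\hat{V}_{\delta(\eta)}(P(\eta))))$ is not contained in the $\epsilon$-ball about $P(\eta)$; choose $v_{k}$ in the transversal with $d(Hol_{u}(\mathcal{P}^{n_{k}}(v_{k})),P(\eta))\ge\epsilon$. Lift to $T_{1}\tilde{M}$, with $\tilde{\eta}$ a lift of $P(\eta)$. As in the proof of Proposition \ref{contraction}, the quasi-convexity estimate of Lemma \ref{asymptotic-1}(3), applied now to the unstable foliation, controls the relevant geodesics along the entire backward orbit with the quasi-convexity constants $K,C$ independently of $n_{k}$ — this uniformity is the crux — so that, since $\mathcal{P}^{n_{k}}(P(\eta))\to P(\eta)$, one may extract a convergent subsequence and obtain in the limit a $\mathcal{W}^{u}$-leaf whose geodesics are bi-asymptotic to $\gamma_{\tilde{\eta}}$. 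By Lemma \ref{asymptotic-1}(2) such a leaf must be $\mathcal{W}^{u}(\tilde{\eta})$ itself, so its $Hol_{u}$-coordinate is $P(\eta)$; but it is the limit of $\mathcal{W}^{u}$-leaves whose $Hol_{u}$-coordinates stay at distance $\ge\epsilon$ from $P(\eta)$, a contradiction. Hence some $n=n_{k}$ works, and for it $Hol_{u}\circ\mathcal{P}^{n}$ maps the closed ball $\hat{V}_{\delta(\eta)}(P(\eta))$ strictly into its interior, i.e. is a topological contraction — after, if necessary, shrinking the section and the parameters $\delta(\eta)$, $U_{\eta}^{u}$ as Lemmas \ref{crossed-int} and \ref{section-product-st} permit, so that $\Phi$, $Hol_{u}$ and the returns $\mathcal{P}^{n_{k}}$ are defined where they are used.

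The statement for $Hol_{s}\circ\mathcal{P}^{m}$ with $m<0$ is the verbatim time-reversed assertion: reversing the geodesic flow interchanges $\tilde{\mathcal{F}}^{s}$ with $\tilde{\mathcal{F}}^{u}$, the $s$-foliated section with the $u$-foliated one, and $Hol_{s}$ with $Hol_{u}$, so the same argument applies, using the backward recurrence of $P(\eta)$ and the $u$-analogue of Proposition \ref{contraction}. The main obstacle is the uniformity invoked above: Proposition \ref{contraction} is phrased for the single stable leaf through $\eta$, whereas here one needs its conclusion simultaneously for the whole family of $\mathcal{W}^{u}$-leaves meeting the transversal, uniformly in the large iterate $n_{k}$; the naive continuity argument fails because $\mathcal{P}^{n_{k}}$ expands in the $\mathcal{W}^{u}$-direction, and it is only the iterate-independent quasi-convexity bound of Lemma \ref{asymptotic-1}(3) that lets the compactness-and-contradiction scheme go through. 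A secondary, purely technical, point is the bookkeeping of the domains of $Hol_{u}$, $Hol_{s}$, $\Phi$ and of the returns, handled by taking $\epsilon\ll\delta(\eta)$ and shrinking the section.
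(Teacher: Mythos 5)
Your proof is correct and takes essentially the same route as the paper: the paper applies Proposition \ref{contraction} to the compact piece of the stable plaque of $P(\eta)$ contained in the $u$-saturated set $\bar{S}_{U_{\theta}^{u},\delta(\eta)}^{u}(P(\eta))$ and then uses the $\mathcal{P}$-invariance of the $\mathcal{W}^{u}$-foliation to transfer the conclusion to $Hol_{u}\circ\mathcal{P}^{n}$ on $\hat{V}_{\delta(\eta)}(P(\eta))$, exactly the reduction you describe, and obtains the backward statement from the flip $(p,v)\mapsto(p,-v)$. The uniformity you flag is already built into Proposition \ref{contraction} (its conclusion is uniform over a compact subset $K$ of the single stable leaf, and every relevant $\mathcal{W}^{u}$-leaf meets that compact set), so your second-paragraph re-derivation by quasi-convexity is a redundant repetition of that proposition's proof rather than a needed supplement.
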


\begin{figure}[ht]
\begin{center}
\includegraphics[scale=0.84]{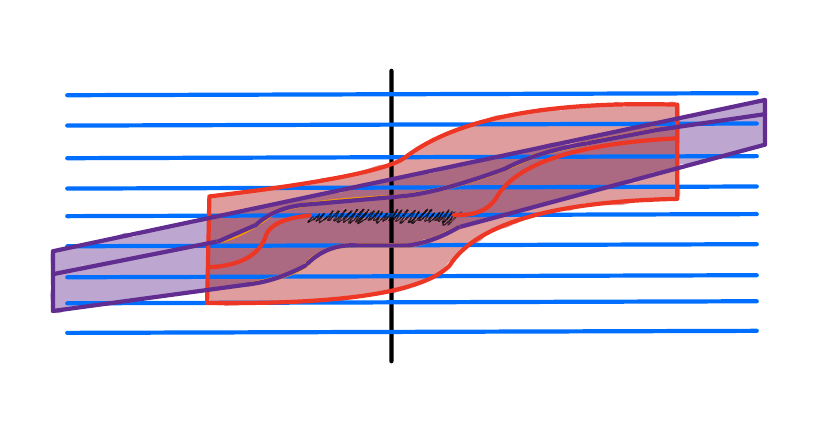}
\begin{picture}(0,0)
\put(-209,77){$I(\theta)$} 
\put(-259,102){$S^s_{U^s_{\theta,\delta}}$} 
\put(-129,102){$\mathcal{W}^u(\theta)$} 
\put(-203,149){$V_\delta(\theta)$} 
\put(-369,52){$\mathcal{P}^n(S^s_{U^s_{\theta,\delta}})$} 
\end{picture}
\end{center}
\vspace{-0.5cm}
\caption{{\small Depiction of the return map appearing in Lemma \ref{s-contraction} .}}\label{fig-3}
\end{figure}

	\begin{proof}
		
		By Lemma \ref{section-product-st} item (2), the intersection $ \bar{S}_{U_{\theta}^{s},\delta(\eta)}^{u} (\eta) \cap \tilde{\mathcal{F}}^{s}(\eta)$ is a compact set of the section $S_{U_{\theta}^{s}, \delta }^{s}$ that contains an open relative neighborhood $U^{s}_{\delta(\eta)}(I(\eta))$ of $I(\eta)$ in $ \tilde{\mathcal{F}}^{s}(\eta)$.
		
		By Lemma \ref{contraction}, there exists $n >0$ such that the $n-th$-iterate $\mathcal{P}^{n}$ of the Poincar\'{e} map of the section $S_{U_{P(\theta)}^{s}, \delta }^{s}$ applied to $\bar{S}_{U_{\theta}^{s},\delta(\eta)}^{u} (P(\eta)) \cap \mathcal{F}^{s}_{c}(P(\eta))$, where $\mathcal{F}^{s}_{c}(P(\eta))$ is the connected component of $\mathcal{F}^{s}(P(\eta))$ in the section $S_{U_{P(\theta)}^{s}, \delta }^{s}$ containing $P(\eta)$, is a subset of $U^{s}_{\delta(\eta)}(I(P(\eta)))$. Since the map $\mathcal{P}$ preserves the sets $\mathcal{W}^{u}(\xi)$, this implies that the image by $\mathcal{P}^{n}$ of the leaves $\mathcal{W}^{u}(\xi)$ in $\bar{S}_{U_{\theta}^{s},\delta(\eta)}^{u} (P(\eta))$ only meet $\hat{V}_{\delta(\eta)}(P(\eta))$ in its interior.
		
		In other words, 
		$$Hol_{u}\circ \mathcal{P}^{n}(\bar{S}_{U_{\theta}^{s},\delta(\eta)}^{u} (P(\eta))) \subset \hat{V}_{\delta(\eta)}(P(\eta)) $$ 
		and in particular, 
		$$Hol_{u}\circ \mathcal{P}^{n}(\hat{V}_{\delta(\eta)}(P(\eta))) \subset \hat{V}_{\delta(\eta)}(P(\eta)). $$
		This shows that $Hol_{u}\circ \mathcal{P}^{n}$ is a topological contraction in $\hat{V}_{\delta(\eta)}(P(\eta))$ as claimed. 	
		
		The proof of the second part of the statement follows from the above argument just observing that the involution $(p,v) \rightarrow (p,-v)$ interchanges stable and unstable sets. 
		\end{proof}

	\subsection{Lefchetz index theory and the existence of periodic points in a local product neighborhood}
	
	\bigskip
	
	The purpose of the subsection is to apply Lefchetz fixed point theory to the action of the Poincar\'{e} map $\mathcal{P}$ in the foliated set 
$$\bar{S}_{U_{P(\eta)}^{u},\delta(\eta)}^{u} (P(\eta))= S_{U_{P(\theta)}^{s}, \delta}^{s}\cap  \Gamma_{U_{P(\eta)}^{u}, \delta(\eta) , \sigma}^{u}$$ 
defined in item (1) of Lemma \ref{section-product-st} that is homeomorphic to the product of open disks $\mathcal{W}^{u}_{U_{P(\eta)}^{u}, \delta }(P(\eta)) \times V_{\delta(\eta)}(P(\eta))$.  
	
\begin{lema}[Lefschetz fixed point theorem] \label{fixed-points}
	Let $U = \mathbb{D}^{\ell_1} \times \mathbb{D}^{\ell_2}$ with $d= \ell_1 + \ell_2$ and $f: U \to \RR^d$ be a continuous map so that $f(U) \cap U \neq \emptyset$ and:
	\begin{itemize}
		\item $f(\partial \mathbb{D}^{\ell_1} \times \mathbb{D}^{\ell_2}) \cap U = \emptyset$ ,
		\item $\mathbb{D}^{\ell_1} \times \partial \mathbb{D}^{\ell_2} \cap f(U) = \emptyset$. 
	\end{itemize} 
	Then, there is a fixed point of $f$ inside $U$. 
\end{lema}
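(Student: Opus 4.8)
The plan is to reduce the statement to Brouwer degree theory; the label ``Lefschetz'' reflects that one is really computing a fixed-point index. Write points of $\RR^d$ as $z=(x,y)\in\RR^{\ell_1}\times\RR^{\ell_2}$, write $f=(f_1,f_2)$ accordingly, and put $g=\operatorname{id}-f\colon U\to\RR^d$; fixed points of $f$ in $U$ are exactly the zeros of $g$. The first step is to check that $g$ does not vanish on the topological boundary $\partial U=(\partial\mathbb{D}^{\ell_1}\times\mathbb{D}^{\ell_2})\cup(\mathbb{D}^{\ell_1}\times\partial\mathbb{D}^{\ell_2})$: if $(x,y)$ lies on the first face and $g(x,y)=0$ then $f(x,y)=(x,y)\in U$, contradicting the first hypothesis; if $(x,y)$ lies on the second face and $g(x,y)=0$ then $f(x,y)=(x,y)\in\mathbb{D}^{\ell_1}\times\partial\mathbb{D}^{\ell_2}$ with $(x,y)\in U$, contradicting the second hypothesis. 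Hence $0\notin g(\partial U)$, the Brouwer degree $\deg(g,U,0)$ is well defined, and any nonzero value of it produces a zero of $g$, i.e.\ a fixed point of $f$, in the interior of $U$.

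The crux is therefore to show $\deg(g,U,0)\neq0$. I would establish this by comparing $f$ with a linear ``hyperbolic'' model $L(x,y)=(\lambda x,\lambda^{-1}y)$ with $\lambda>1$: there $\operatorname{id}-L$ is a linear isomorphism of $\RR^d$, so $\deg(\operatorname{id}-L,U,0)=\pm1$, and $\operatorname{id}-L$ has a positive lower bound $m$ on the compact set $\partial U$ (it vanishes only at the origin, which is interior to $U$). The straight-line homotopy $h_t=\operatorname{id}-\big((1-t)f+tL\big)$ connects $g$ to $\operatorname{id}-L$, and on $\partial U$ one has $|h_t-(\operatorname{id}-L)|=(1-t)|f-L|$; so provided $f$ stays within $C^0$-distance $<m$ of $L$ along $\partial U$ — which the two inclusion hypotheses are meant to enforce, since they force $f$ to push outward in the $x$-direction on $\partial\mathbb{D}^{\ell_1}\times\mathbb{D}^{\ell_2}$ and to keep its image off $\mathbb{D}^{\ell_1}\times\partial\mathbb{D}^{\ell_2}$, exactly as $L$ does — the homotopy $h_t$ never vanishes on $\partial U$, whence $\deg(g,U,0)=\deg(\operatorname{id}-L,U,0)=\pm1$ and we are done. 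Equivalently, one can phrase the hypotheses as a covering relation in the sense of Zgliczy\'nski--Gidea and quote their fixed-point theorem, or use the discrete Conley index of the isolating block $(U,\partial\mathbb{D}^{\ell_1}\times\mathbb{D}^{\ell_2})$, whose index has the pointed homotopy type of an $\ell_1$-sphere and hence nonzero fixed-point index.

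The step I expect to be the real obstacle is precisely this degree computation: the set-inclusions alone control the image of $\partial U$ only qualitatively (``it leaves $U$'' along one family of faces and ``it misses $\mathbb{D}^{\ell_1}\times\partial\mathbb{D}^{\ell_2}$'' throughout), and to pin the degree down one must additionally know that $f$ genuinely covers $U$ across the $x$-direction with degree $\pm1$ rather than merely escaping it. In the situation where this lemma is applied, $f$ is (a restriction of an iterate of) the Poincar\'e return map on one of the foliated boxes $\mathcal{W}^{u}_{U^u_{P(\eta)},\delta}(P(\eta))\times V_{\delta(\eta)}(P(\eta))$ constructed above, hence a $C^0$-small perturbation of an honest expansion$\times$contraction; that perturbative structure is exactly what makes the homotopy $h_t$ admissible, and I would carry out the argument under that understanding.
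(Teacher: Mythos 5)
Your proposal follows essentially the same route as the paper's proof: both reduce the statement to a fixed-point index computation by homotoping $f$ to the linear hyperbolic model $A(x,y)=(\lambda x,\lambda^{-1}y)$ (the paper interpolates via a bump function supported in a collar $R$ of $\partial U$ and argues that the segment from $f(z)$ to $Az$ never passes through $z$ for $z\in R$; this is exactly the admissibility condition you impose on your straight-line homotopy $h_t$). Your reduction to $\deg(\mathrm{id}-f,U,0)\neq 0$ and the verification that $\mathrm{id}-f$ does not vanish on $\partial U$ are correct and match the paper.

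The obstacle you single out is a genuine gap, and it is not repaired by the paper's argument either: the two set-theoretic hypotheses do not determine the degree of $f$ across the unstable factor, and the lemma as literally stated is false. Take $\ell_1=\ell_2=1$, $U=[-1,1]^2$ and $f(x,y)=(2x^2+\tfrac12,\,0)$: then $f(U)=[\tfrac12,\tfrac52]\times\{0\}$ meets $U$, $f(\{\pm1\}\times[-1,1])=\{(\tfrac52,0)\}$ misses $U$, $f(U)$ misses $[-1,1]\times\{\pm1\}$, yet $2x^2+\tfrac12=x$ has no real root, so $f$ has no fixed point. The paper's proof breaks precisely at the claim that the segment joining $f(z)$ and $Az$ avoids $z$ for $z\in R$ ``because the first coordinate is strictly larger for both'': near $x=-1$ in this example the first coordinate of $f(z)$ is about $\tfrac52$ while that of $Az$ is about $-\lambda$, and the segment between them does cross $x\approx -1$; having larger norm does not help when $f$ reverses the unstable direction. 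So both your homotopy and the paper's require the additional hypothesis you name: that $f$ on $\partial\mathbb{D}^{\ell_1}\times\{y\}$, projected and normalized in the $x$-factor, has nonzero degree (equivalently, a covering relation in the sense of Zgliczy\'{n}ski--Gidea). That hypothesis does hold where the lemma is applied, since there $f=\mathcal{P}^{n}$ is a $C^{0}$-small perturbation of an expansion times a contraction on the foliated box, so your perturbative reading is the right way to make both the argument and the statement correct; but as written, neither your proof nor the paper's closes this step from the stated hypotheses alone.
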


\begin{proof}
	By hypothesis, we have that after composing $f$ with an isotopy of $\RR^d$ which is the identity on $U$ (thus does not affect the fixed point set of $f$) we can assume that for every $z=(x,y)$ with $\|x\|=1$ (i.e. $z \in \partial \mathbb{D}^{\ell_1} \times \mathbb{D}^{\ell_2}$ we have that if $z'=(x',y')=f(z)$ then $\|x'\|>r>1$. Similarly, if $z= (x,y)$ with $\|y\|=1$ then we have that $\|y'\|<1/r<1$. This holds for some $r>1$ by compactness.

	Define $R$ to be a $\delta$-neighborhood of $\partial U$ with the property that for $z=(x,y)$ with $\|x\|\in (1-\delta,\delta)$ one has that $z'=(x',y')=f(z)$ verifies that $\|x'\|> r$. Similarly, for $z=(x,y)$ with $\|y\| \in (1-\delta,1)$ we have that $\|y'\| < 1/r$ that by choosing $\delta$ small we can assume that $1/r<(1-\delta)$. 
	
	Consider $\rho: U \to [0,1]$ a smooth function which is constant equal to $1$ in $U \setminus R$ and constant equal to $0$ in a small neighborhood of $\partial U$ (clearly, contained in $R$). Define $f_t: U \to \RR^d$ defined by $f_t(z) = (1-t) f(z) + t f_\rho(z)$ where
	
	$$ f_\rho(z)= (1-\rho(z)) f(z) + \rho(z) Az$$ \noindent where $Az= A(x,y) = (\lambda x, \lambda^{-1} y)$ and $\lambda>1$ is close to $1$. It follows that the segment joining $f(z)$ and $Az$ does not contain $z$ for every $z \in R$ because either the first coordinate is strictly larger for both, or the second coordinate is strictly smaller for both. 
	
	We claim that the only fixed point of $f_\rho$ is $(0,0)$ and has Lefschetz index with absolute value $1$. 
	
	To see this, note that $f_\rho = A$ in $U \setminus R$ so it cannot have fixed points other than $(0,0)$ there. Clearly, the index of $(0,0)$ is either $1$ or $-1$ depending on the dimensions $\ell_1$ and $\ell_2$.  We must then see that there are no fixed points in $R$, but this follows directly because $z$ is not contained in the segment between $f(z)$ and $Az$ as remarked before. 
	
	Therefore, since $f$ is homotopic to $f_\rho$ with a homotopy which is constant in a neighborhood of $\partial U$ we deduce that the total Lefchetz index of $f$ in $U$ is non-zero and therefore has a fixed point. 
\end{proof}
	
\begin{corollary} \label{fixed-points-dense}
Let $P(\eta)$ be a recurrent point for the Poincar\'{e} map $\mathcal{P} : S_{U_{P(\theta)}^{s}, \delta}^{s}\longrightarrow S_{U_{P(\theta)}^{s}, \delta}^{s} $,  let $\delta(\eta)>0$,  $U_{\eta}^{u},$ be given in Lemma \ref{section-product-st}. Then for every $\tau < \delta(\eta)$, and every relative open neighborhood of $I(P(\eta))$, $A_{P(\eta)}^{u} \subset U_{P(\eta)}^{u}=P(U_{\eta}^{u})$
homeomorphic to a disk, the set 
	$$\bar{S}_{A_{P(\eta)}^{u},\tau}^{u} (P(\eta))= S_{U_{P(\theta)}^{s}, \delta}^{s}\cap  \Gamma_{A_{P(\eta)}^{u}, \tau , \sigma}^{u}$$ 
contains a periodic point of the Poincar\'{e} map. In particular, given $\epsilon >0$ there exists a periodic point of the Poincar\'{e} map that is within a distance $\epsilon$ from some point in $I(P(\eta))$. 
\end{corollary}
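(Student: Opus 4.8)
The plan is to realize a small neighbourhood of $I(P(\eta))$ inside the cross section $S_{U_{P(\theta)}^{s},\delta}^{s}$ as a product of disks in ``horseshoe position'' for a suitably large return iterate of the Poincar\'{e} map $\mathcal{P}$, and then to invoke the Lefschetz-type Lemma \ref{fixed-points}.

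First I would fix $\tau<\delta(\eta)$, a small $\sigma>0$, and a disk-like relative open neighbourhood $A_{P(\eta)}^{u}\subset U_{P(\eta)}^{u}\subset\mathcal{F}^{u}(P(\eta))$ of $I(P(\eta))$ which contains $I(P(\eta))$ in its interior; set $\rho_{0}=d(\partial A_{P(\eta)}^{u},I(P(\eta)))>0$. Applying item (1) of Lemma \ref{section-product-st} with $(A_{P(\eta)}^{u},\tau)$ in place of $(U_{\eta}^{u},\delta(\eta))$, the set
$$U:=\bar{S}_{A_{P(\eta)}^{u},\tau}^{u}(P(\eta))=S_{U_{P(\theta)}^{s},\delta}^{s}\cap\Gamma_{A_{P(\eta)}^{u},\tau,\sigma}^{u}$$
is homeomorphic to a product of closed disks $\mathbb{D}^{\ell_{1}}\times\mathbb{D}^{\ell_{2}}$ with $\ell_{1}=\ell_{2}=n-1$, so that $d:=\ell_{1}+\ell_{2}=2n-2=\dim S_{U_{P(\theta)}^{s},\delta}^{s}$ and, in a chart, $\mathcal{P}^{N}$ is a continuous map $U\to\RR^{d}$ for any $N>0$. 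Under this homeomorphism the first factor $\mathbb{D}^{\ell_{1}}$ is the piece of unstable leaf $\mathcal{W}^{u}_{A_{P(\eta)}^{u},\tau}(P(\eta))$ (which will play the role of the expanding direction), the second factor $\mathbb{D}^{\ell_{2}}$ is the base $\hat{V}_{\tau}(P(\eta))$ (the contracting direction), the fibers $\{x\}\times\mathbb{D}^{\ell_{2}}$ are sent onto the $\mathcal{W}^{u}$-holonomy projection $Hol_{u}\colon U\to\hat{V}_{\tau}(P(\eta))$, and the leaves $\mathcal{W}^{u}(\cdot)$ are preserved by $\mathcal{P}$.

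Next, since $P(\eta)$ is recurrent for $\mathcal{P}$, I would pick a return time $N>0$ among the recurrence times (so that $\mathcal{P}^{N}(P(\eta))$ is arbitrarily close to $P(\eta)$, $\mathcal{P}^{N}$ is defined on $\overline{U}$, and the corresponding total flow time $t_{N}\to\infty$) and check the hypotheses of Lemma \ref{fixed-points} for $f=\mathcal{P}^{N}$. The condition $f(U)\cap U\neq\emptyset$ holds because $P(\eta)\in I(P(\eta))\subset U$, hence $\mathcal{P}^{N}(P(\eta))\in U\cap\mathcal{P}^{N}(U)$. The ``contracting'' boundary condition $\mathbb{D}^{\ell_{1}}\times\partial\mathbb{D}^{\ell_{2}}\cap f(U)=\emptyset$ is exactly the first half of Lemma \ref{s-contraction}: for $N$ large, $Hol_{u}\circ\mathcal{P}^{N}$ maps $\hat{V}_{\tau}(P(\eta))$ into an arbitrarily small neighbourhood of $I(P(\eta))$ inside $\mathrm{int}(\hat{V}_{\tau}(P(\eta)))$, and since $\mathbb{D}^{\ell_{1}}\times\partial\mathbb{D}^{\ell_{2}}=Hol_{u}^{-1}(\partial\hat{V}_{\tau}(P(\eta)))$ while $\mathcal{P}$ preserves the $\mathcal{W}^{u}$-fibers, we get $\mathcal{P}^{N}(U)\cap(\mathbb{D}^{\ell_{1}}\times\partial\mathbb{D}^{\ell_{2}})=\emptyset$. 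The ``expanding'' boundary condition $f(\partial\mathbb{D}^{\ell_{1}}\times\mathbb{D}^{\ell_{2}})\cap U=\emptyset$ requires showing that $\mathcal{P}^{N}$ pushes the lateral sides of $U$ off $U$: these sides sit over $\partial A_{P(\eta)}^{u}$, so (after lifting to $T_{1}\tilde{M}$ as in the proof of Proposition \ref{contraction}) they correspond to vectors $\xi$ lying in an unstable leaf $\tilde{\mathcal{F}}^{u}(\zeta)$ with $\zeta$ close to the lift of $P(\eta)$ but $\xi\notin I(\zeta)$. By Lemma \ref{asymptotic-1}, $\gamma_{\xi}$ is backward asymptotic to $\gamma_{\zeta}$ but, since $\xi\notin I(\zeta)$, it is \emph{not} forward asymptotic to it; combining the backward estimate of Lemma \ref{asymptotic-1}(3) with quasi-convexity and the uniform divergence of geodesic rays one gets $d(\gamma_{\xi}(t),\gamma_{\zeta}(t))\to\infty$ as $t\to+\infty$, uniformly for $\xi$ in the (compact) lateral boundary. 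Since $\mathcal{P}^{N}=\phi_{t_{N}}$ with $t_{N}\to\infty$ and $\phi_{t_{N}}$ keeps the orbit of $P(\eta)$ — hence the bounded strip $I(P(\eta))$ — near itself, this forces $\mathcal{P}^{N}(\partial\mathbb{D}^{\ell_{1}}\times\mathbb{D}^{\ell_{2}})$ to leave $U$ for $N$ large. Lemma \ref{fixed-points} then yields a fixed point of $\mathcal{P}^{N}$ in $U=\bar{S}_{A_{P(\eta)}^{u},\tau}^{u}(P(\eta))$, i.e.\ a periodic point of $\mathcal{P}$, and therefore a periodic orbit of the geodesic flow, inside that set.

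For the final assertion, given $\epsilon>0$ I would take $\tau,\sigma$ and $A_{P(\eta)}^{u}$ small enough that $\bar{S}_{A_{P(\eta)}^{u},\tau}^{u}(P(\eta))$ is contained in the $\epsilon$-neighbourhood of $I(P(\eta))$ — which is possible because, by Lemma \ref{crossed-int}, Corollary \ref{basis} and the semi-continuity of $I(\cdot)$ (Lemma \ref{semi-cont}), these sets shrink to $I(P(\eta))$ as the parameters tend to $0$ — so the periodic point just produced lies within $\epsilon$ of some point of $I(P(\eta))$. I expect the main obstacle to be the ``expanding'' boundary condition of Lemma \ref{fixed-points} and, in particular, establishing it for the \emph{same} return time $N$ that yields the contracting condition from Lemma \ref{s-contraction}: in the absence of any uniform hyperbolicity, the escape of the lateral sides of $U$ must be extracted from quasi-convexity, the uniform divergence of geodesic rays and the recurrence of $P(\eta)$, and one must also be careful with the lifting of the sides of $U$ to $T_{1}\tilde{M}$ and with the uniformity of all estimates over the compact set $\partial A_{P(\eta)}^{u}$.
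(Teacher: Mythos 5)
Your proposal is correct and follows essentially the same route as the paper: identify $\bar{S}_{A_{P(\eta)}^{u},\tau}^{u}(P(\eta))$ with a product of disks via Lemma \ref{section-product-st}, verify the two boundary conditions of Lemma \ref{fixed-points} for a large recurrence iterate $\mathcal{P}^{N}$, and shrink the parameters to approach $I(P(\eta))$. The only cosmetic difference is that where you re-derive the ``expanding'' boundary condition directly from quasi-convexity and divergence of rays, the paper obtains it by citing Proposition \ref{contraction} applied in backward time to the unstable leaves (via the involution $(p,v)\mapsto(p,-v)$), which is proved by exactly the compactness-and-bi-asymptoticity argument you sketch.
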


\begin{proof}
By Lemma \ref{section-product-st}, the set $\bar{S}_{A_{P(\eta)}^{u},\tau}^{u} (P(\eta))= S_{U_{P(\theta)}^{s}, \delta}^{s}\cap  \Gamma_{A_{P(\eta)}^{u}, \tau , \sigma}^{u}$ is homeomorphic to the product 
$$ \mathcal{W}^{u}_{A_{P(\eta)}^{u}, \delta} \times V_{\tau}(P(\eta)) $$ 
or equivalently, homeomorphic to the product 
$$ \mathcal{W}^{u}_{A_{P(\eta)}^{u}, \delta }\times \Phi(V_{\tau}(P(\eta)) ) \subset S_{U_{P(\theta)}^{s}, \delta}^{s}$$ 
that is the product of two  $n$-dimensional disks of the section $S_{U_{P(\theta)}^{s}, \delta}^{s}$. Now we apply Lemma \ref{fixed-points} to the Poincar\'{e} map map $f = \mathcal{P}^{n}$  and the set 
$$U = \mathbb{D}_{1}^{k-1} \times \mathbb{D}_{2}^{k-1}$$
with $\mathbb{D}_{1}^{k-1} = \mathcal{W}^{u}_{A_{P(\eta)}^{u}, \delta }$, $\mathbb{D}_{2}^{k-1} = \Phi(V_{\tau}(P(\eta)) )$, $k = dim(M)$, up to a change of coordinates. 

Indeed, by Lemma \ref{s-contraction} and Proposition \ref{contraction} applied to  relative open neighborhoods of $I(\psi)$ in $\mathcal{W}^{u}(\psi)$, the map $\mathcal{P}^{n}$ applied to the set $U$ satisfies the assumptions of Lemma \ref{fixed-points} and therefore, it contains a fixed point for $\mathcal{P}^{n}$. 

Now, consider a sequence $0<\tau_{j} <\delta(\eta)$ converging to zero, and a sequence of sets $\mathcal{W}^{u}_{A_{P(\eta),j}^{u}, \delta }$ whose intersection is the set $I(P(\eta))$ (this is possible by Corollary \ref{basis}). We can apply Lemma \ref{fixed-points} to some power $\mathcal{P}^{n_{j}}$ of the Poincar\'{e} map to the set 
$$ U_{j} = \mathcal{W}^{u}_{A_{P(\eta),j}^{u}, \delta }\times \Phi(V_{\tau_{j}}(P(\eta)) ) \subset S_{U_{P(\theta)}^{s}, \delta}^{s}$$
to conclude that there is a fixed point of $\mathcal{P}^{n_{j}}$ in $U_{j}$. This finishes the proof of the Corollary.

\end{proof}

	\section{The proof of the main Theorem and further results: topological horseshoes and entropy} 

In this final section we present the proof of Theorem \ref{main} and further applications of the Lefchetz fixed point Theory to geodesic flows without conjugate points and continuous horospherical foliations, namely, Theorem \ref{main2}. 

Let us start with the proof of Theorem \ref{main}. The theorem claims that if the geodesic flow of a compact, connected manifold $(M,g)$ without conjugate points with quasi-convex universal covering where geodesic rays diverge is $C^{2}$-structurally stable from Ma\~{n}\'{e}'s viewpoint, then the geodesic flow is Anosov. We know by Theorem \ref{persistence} that the $C^{2}$-structural stability from Ma\~{n}\'{e}'s viewpoint implies implies that the closure of the set of periodic orbits is a hyperbolic set. 
	
		By Corollary \ref{fixed-points-dense}, there exists an open neighborhood of the closure of periodic orbits where the bi-asymptotic class of every recurrent point is approached by periodic orbits. Since recurrent points have total Liouville measure in $T_{1}M$ this yields that the whole neighborhood is contained in the closure of the set of hyperbolic periodic points, a hyperbolic set. Therefore, the set of points in $T_{1}M$ contained in a hyperbolic set (with uniform constants) of the geodesic flow is open, and since it is also closed it must be the whole manifold if the manifold is connected, so the geodesic flow is Anosov. 

Of course, if the geodesic flow is Anosov the geodesic flow is $C^{1}$ structurally stable by the work of Anosov and in particular, it is $C^{2}$-structurally stable from Ma\~{n}\'{e}'s viewpoint. This completes the proof of Theorem \ref{main}.
	
	\bigskip

Let us now prove Theorem \ref{main2}, 	whose assumptions do not include the stability of the geodesic flow of $(M,g)$. A crucial step of the proof is the following stament:

\begin{prop} \label{homoclinic}
Let $(M^{n},g)$ be a compact $C^{\infty}$, $n$-dimensional, connected Riemannian manifold without conjugate points  such that the universal covering is a quasi-convex space where geodesic rays diverge. Suppose that there exists a hyperbolic periodic point $\theta$ for the geodesic flow. Then there exists a homoclinic intersection between the stable manifold $\mathcal{F}^{s}(\theta)$ and the center unstable manifold $\mathcal{F}^{cu}(\theta)$. 
Analogously, there exists a homoclinic intersection between the unstable manifold $\mathcal{F}^{u}(\theta)$ and the center stable manifold $\mathcal{F}^{cs}(\theta)$. 
\end{prop}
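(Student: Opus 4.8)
The plan is to produce the homoclinic intersection by taking a recurrent point of the geodesic flow that lies very close to the hyperbolic periodic orbit $\theta$ and then running the machinery developed in Sections 5--7 to find a periodic orbit close to its bi-asymptotic class, whose invariant manifolds then intersect those of $\theta$ transversally. More precisely, since $\theta$ is a hyperbolic periodic point, Theorem \ref{local-product} gives an open local product neighborhood $W$ of (a lift of) $\theta$ in $T_1\tilde M$, which projects to a local product neighborhood of $\theta$ in $T_1 M$; this is exactly the setting of Theorem \ref{density-per}. First I would pick a recurrent point $\eta \in W$ (recurrent points are of full Liouville measure, hence dense in $W$), distinct from the orbit of $\theta$, and apply Corollary \ref{fixed-points-dense} to obtain periodic points of the Poincar\'e map accumulating on the bi-asymptotic class $I(\eta)$. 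Choosing $\eta$ appropriately, one of these periodic points, call it $p$, lies in $W$ but on a different orbit than $\theta$.

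Next I would extract the homoclinic connection from the local product structure. Because $p \in W$ and $\theta$ is a hyperbolic point, Definition \ref{local-product-0} guarantees $\tilde{\mathcal{F}}^s(p) \cap \tilde{\mathcal{F}}^{cu}(\theta) \neq \emptyset$ and $\tilde{\mathcal{F}}^u(p) \cap \tilde{\mathcal{F}}^{cs}(\theta) \neq \emptyset$. By Lemma \ref{Busemann-stable} and the remark following it, since the closure of the hyperbolic periodic orbit of $\theta$ is a hyperbolic set (automatic here, as $\theta$ is a single hyperbolic periodic orbit), the strong stable and unstable manifolds $\mathbf{W}^s(\theta)$, $\mathbf{W}^u(\theta)$ are open neighborhoods of $\theta$ inside $\mathcal{F}^s(\theta)$, $\mathcal{F}^u(\theta)$; the analogous statement holds for $p$ once we know $p$ is a hyperbolic periodic point, which follows because $p$ itself lies in the hyperbolic set generated by the periodic orbit closure (or directly: a periodic point produced inside a local product neighborhood of a hyperbolic point is again a generalized rank one point, hence hyperbolic, by Lemma \ref{strip}(4) and the definition of hyperbolic point). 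Then a point in $\mathcal{F}^s(p)\cap\mathcal{F}^{cu}(\theta)$ gives, after flowing, an actual intersection of $\mathbf{W}^s(p)$ with $\mathbf{W}^{cu}(\theta)$; combining with the fact that $p$ and $\theta$ are hyperbolic of the same index, a standard inclination/$\lambda$-lemma argument upgrades this to an intersection of $\mathbf{W}^s(\theta)$ with $\mathbf{W}^{cu}(\theta)$ away from the orbit of $\theta$ — that is, a homoclinic intersection between $\mathcal{F}^s(\theta)$ and $\mathcal{F}^{cu}(\theta)$. The symmetric construction, using the involution $(p,v)\mapsto(p,-v)$ which swaps stable and unstable sets (as invoked at the end of the proof of Lemma \ref{s-contraction}), yields the homoclinic intersection between $\mathcal{F}^u(\theta)$ and $\mathcal{F}^{cs}(\theta)$.

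I expect the main obstacle to be verifying that the periodic point $p$ supplied by Corollary \ref{fixed-points-dense} is genuinely \emph{hyperbolic} (so that its invariant manifolds have the expected dimensions and the $\lambda$-lemma applies) and that the resulting intersection is \emph{transverse} and \emph{off the orbit of} $\theta$ rather than a trivial self-intersection. Hyperbolicity of $p$ should follow from Theorem \ref{persistence} only if we are in the stable setting, which we are not assuming here; instead one must argue directly that a periodic point in a local product neighborhood of a hyperbolic point is a generalized rank one point (Lemma \ref{strip}(4) forces $I(p)$ to be a single point whenever the leaves through $p$ are transverse), and combine this with the continuity of the foliations $\mathcal{F}^s$, $\mathcal{F}^u$ (Corollary \ref{fol-continuity}) and the transversality persisting on an open set (the Remark after Theorem \ref{local-product}) to conclude that $p$ is itself a hyperbolic point. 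Ensuring the connection is nontrivial is handled by choosing $\eta$ recurrent but not on the orbit of $\theta$ and noting that $I(\eta)$, hence the nearby periodic point $p$, can be taken disjoint from the orbit of $\theta$; the local product intersections then connect two distinct hyperbolic orbits, and chaining $\mathbf{W}^s(\theta)\leadsto\mathbf{W}^s(p)$ and $\mathbf{W}^u(p)\leadsto\mathbf{W}^u(\theta)$ through the heteroclinic points produces a genuine homoclinic point of $\theta$.
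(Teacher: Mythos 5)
Your route is genuinely different from the paper's, and it has a gap at the inclination-lemma step. The $\lambda$-lemma you invoke at the auxiliary periodic point $p$ requires $p$ to be \emph{dynamically} hyperbolic, with exponential contraction and expansion along invariant subbundles; nothing in the Lefschetz construction provides this. Lemma \ref{fixed-points} is a purely topological fixed point theorem and gives no information whatsoever about the linearized Poincar\'e map at the fixed point it produces. Your fallback is circular (``Lemma \ref{strip}(4) forces $I(p)$ to be a single point whenever the leaves through $p$ are transverse'' assumes exactly the transversality you are trying to establish), and even if you could show that $p$ is a hyperbolic point in the paper's weak sense of Section 5 (the leaves $\tilde{\mathcal{F}}^{s}(p)$ and $\tilde{\mathcal{F}}^{u}(p)$ smooth and transverse at $p$), that is a geometric condition on horospheres carrying no rate information: a closed geodesic with, say, a parabolic linearized return map can have $I(p)=\{p\}$ while the inclination lemma fails at it. Outside the structurally stable setting of Theorem \ref{persistence} there is simply no source of hyperbolicity for $p$. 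A secondary problem is that the intersections furnished by the local product structure (Definition \ref{local-product-0}, Lemma \ref{local-prod}) are topological intersections of Lipschitz disks, not transverse intersections of smooth submanifolds, so the classical $\lambda$-lemma does not apply off the shelf even at $\theta$. Note finally that the proposition asks only for a homoclinic \emph{intersection}, which the paper explicitly allows to be non-transverse, so the machinery you deploy aims at more than is claimed.

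The paper's proof avoids the auxiliary periodic orbit entirely: inside the local product neighborhood $W(\theta)$ it takes a recurrent point $\alpha$ whose orbit returns to $W(\theta)$ at times $t_{n}$, uses the flow-invariance of the foliation $\mathcal{F}^{cu}$ so that the leaf $\mathcal{F}^{cu}(\phi_{t_{n}}(\alpha))$ re-enters $W(\theta)$, and reads off its intersection with $\mathcal{F}^{s}(\theta)$ directly from the local product property; the statement for $\mathcal{F}^{u}(\theta)\cap\mathcal{F}^{cs}(\theta)$ follows by the involution $(p,v)\mapsto(p,-v)$. No second periodic orbit and no inclination lemma are needed. If you want to keep your heteroclinic-chaining scheme, the hyperbolicity of $p$ would have to be replaced by the purely topological contraction statements (Proposition \ref{contraction} and Lemma \ref{s-contraction}), which hold for recurrent --- in particular periodic --- points without any hyperbolicity assumption; but that is substantial extra work compared with the direct argument.
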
 
	
Recall that a {\emph homoclinic intersection} between $\mathcal{F}^{s,u}(\theta)$ and the center unstable manifold $\mathcal{F}^{cu,s}(\theta)$ is a point $\eta \in \mathcal{F}^{s,u}(\theta) \cap \mathcal{F}^{cu,s}(\theta)$ that is different from $\theta$. If these submanifolds are transverse at $\eta$, the point $\eta$ is called a {\emph transverse} homoclinic point. 
So Proposition \ref{homoclinic} claims that there exists a homoclinic intersection between the invariant submanifolds of the hyperbolic periodic point $\theta$, not necessarily transverse. 

The existence of transverse homoclinic points of hyperbolic periodic orbits is one of the main reasons of chaotic dynamics in the sense of entropy: their existence implies positive topological entropy and exponential growth of the set of periodic orbits in terms of their periods. We shall extend this idea to our context despite the possible lack of transversality, as a consequence of the results of Sections 5 to 7. 
\bigskip

{\textbf Proof of Proposition \ref{homoclinic}}
\bigskip

Let $\theta \in T_{1}M$ be a hyperbolic periodic point of the geodesic flow. By the results of Section 5, there exists an open neighborhood $W(\theta)$ with a local product structure.  We can suppose without loss of generality that $W(\theta)$ is a $u$-foliated neighborhood by the remarks of Section 4 (Lemma \ref{vertical-neigh}, namely, it is foliated by open relative subsets of the leaves of $\mathcal{F}^{cu}$ homeomorphic to $(n-1)$ dimensional disks where $n = dim(M)$. Let $\alpha \in W(\theta)$ be a (forward and backwards) recurrent point, and let $t_{n} \rightarrow +\infty$ be such that $\phi_{t_{n}}(W(\theta) )\cap W(\theta) \neq \emptyset $. By the invariance of the foliation $\mathcal{F}^{cu}$, the leaves 
$$ \phi_{t_{n}}(\mathcal{F}^{cu}(\alpha) )= \mathcal{F}^{cu}(\phi_{t_{n}}(\alpha))$$ 
of some points $\beta \in W(\theta)$ come back to $W(\theta)$, that is a local product neighborhood of $\theta$. Therefore, the connected component in $W(\theta)$ of the center unstable leaf $\mathcal{F}^{cu}(\phi_{t_{n}}(\alpha)$ which contains $\phi_{t_{n}}(\alpha)$ meets $\mathcal{F}^{s}(\theta)$ at a homoclinic point $\eta$. An analogous argument applies to backward recurrency and $s$-foliated open neighborhoods of $\theta$, thus finishing the proof of the Proposition. 
\bigskip

\begin{lemma} \label{hom-acc}
Under the assumptions of Proposition \ref{homoclinic}, the periodic point $\theta$ is accumulated by homoclinic intersections of the submanifolds $\mathcal{F}^{s,u}(\theta) \cap \mathcal{F}^{cu,s}(\theta)$. 
\end{lemma}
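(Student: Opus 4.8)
The plan is to upgrade the single homoclinic intersection produced in Proposition~\ref{homoclinic} to a whole sequence of homoclinic intersections converging to $\theta$, by exploiting the contracting behaviour of the Poincar\'e return map on the foliated neighbourhoods built in Sections 4--7. First I would recall that $\theta$ is a hyperbolic periodic point, so $I(\theta)=\theta$ and $\mathcal{F}^{s}(\theta)$, $\mathcal{F}^{u}(\theta)$ are genuine smooth invariant submanifolds meeting transversally at $\theta$; by Theorem~\ref{local-product} there is an open local product neighbourhood $W(\theta)$, and by Lemma~\ref{vertical-neigh} we may take it $u$-foliated with a cross section $S^{s}_{U^{s}_{\theta},\delta}$ transverse to the flow through $\theta$. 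Let $\mathcal{P}$ be the Poincar\'e first return map of this section; since $\theta$ is periodic, $\mathcal{P}$ (or a suitable power $\mathcal{P}^{p}$) fixes $P(\theta)$, and $P(\theta)$ is trivially a recurrent point.

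Next I would run the argument of Proposition~\ref{homoclinic} inside smaller and smaller foliated neighbourhoods. Concretely, for each $j$ choose a relative open neighbourhood $A^{u}_{P(\eta),j}\subset U^{u}_{\theta}$ of $\theta=I(P(\theta))$ in $\mathcal{F}^{cu}(\theta)$ and parameters $\tau_{j}\to 0$ so that the sets $\bar{S}^{u}_{A^{u}_{P(\eta),j},\tau_{j}}(P(\theta))$ form a basis of neighbourhoods of $\theta$ (Corollary~\ref{basis}, Lemma~\ref{section-product-st}). Since $\theta$ is recurrent, pick $t_{n_{j}}\to+\infty$ with $\phi_{t_{n_{j}}}(\theta)\to\theta$; then the local center-unstable leaf through $\phi_{t_{n_{j}}}(\theta)$, which is $\phi_{t_{n_{j}}}$ of the local leaf through $\theta$, returns to $W(\theta)$ and, by the local product structure, its connected component in $W(\theta)$ meets $\mathcal{F}^{s}(\theta)$ at a homoclinic point $\eta_{j}\neq\theta$. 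The key point is the \emph{location} of $\eta_{j}$: by Proposition~\ref{contraction} (applied to the recurrent point $P(\theta)$ and the compact set $K$ being an $\epsilon$-tubular neighbourhood of $\theta$) the image $\it{P}^{h}(\mathcal{P}^{n_{j}}(K))$ is contained in an $\epsilon_{j}$-neighbourhood of $\theta$ once $n_{j}$ is large, where $\epsilon_{j}$ can be made as small as we like by taking $n_{j}$ large and $\tau_{j}$ small. Hence $\eta_{j}\to\theta$.

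Finally I would assemble these pieces: the sequence $(\eta_{j})$ consists of homoclinic intersections of $\mathcal{F}^{s}(\theta)$ with $\mathcal{F}^{cu}(\theta)$ (none equal to $\theta$, since the returning leaf is a genuinely different local leaf, being $\phi_{t_{n_{j}}}$ of the local leaf through $\theta$ and $t_{n_{j}}\neq 0 \bmod p$ after passing to a subsequence), and $\eta_{j}\to\theta$ by the contraction estimate of Proposition~\ref{contraction} together with the fact that the $\bar{S}^{u}_{A^{u}_{P(\eta),j},\tau_{j}}(P(\theta))$ shrink to $\theta$. The symmetric statement for $\mathcal{F}^{u}(\theta)\cap\mathcal{F}^{cs}(\theta)$ follows by applying the same reasoning to backward returns, i.e. to $\mathcal{P}^{-1}$, using the flip $(p,v)\mapsto(p,-v)$ which interchanges the stable and unstable objects, exactly as in the proof of Lemma~\ref{s-contraction}.

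The main obstacle I expect is controlling that the homoclinic points $\eta_{j}$ genuinely accumulate \emph{on $\theta$ itself} rather than merely on the bi-asymptotic class $I(\theta)$ or somewhere on $\mathcal{F}^{s}(\theta)$ far from $\theta$; here one must use that $\theta$ is hyperbolic so $I(\theta)=\theta$ is a single point, and then invoke Proposition~\ref{contraction} quantitatively: the $\it{P}^{h}$-projection of the $n_{j}$-th return of a fixed compact $K$ lands inside an arbitrarily small tubular neighbourhood of $\theta$. A secondary technical point is making sure the returning center-unstable leaf actually crosses $\mathcal{F}^{s}(\theta)$ and not just enters $W(\theta)$ --- this is exactly the content of the local product property of $W(\theta)$ (Definition~\ref{local-product-0}), so it is free, but one must check the intersection point is distinct from $\theta$, which holds because the two leaves have distinct base points (the return time is not a multiple of the period for infinitely many $j$).
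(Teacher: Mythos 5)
There is a genuine gap at the heart of your construction: the homoclinic points $\eta_j$ you produce are not actually distinct from $\theta$. You apply the returning-leaf argument of Proposition~\ref{homoclinic} to the recurrent point $\theta$ itself, taking $t_{n_j}\to+\infty$ with $\phi_{t_{n_j}}(\theta)\to\theta$ and intersecting the returning local center-unstable leaf with $\mathcal{F}^{s}(\theta)$. But $\mathcal{F}^{cu}$ is flow-saturated by definition ($\tilde{\mathcal{F}}^{cu}(\theta)=\cup_t \phi_t(\tilde{\mathcal{F}}^{u}(\theta))$), so $\mathcal{F}^{cu}(\phi_{t}(\theta))=\mathcal{F}^{cu}(\theta)$ \emph{as sets} for every $t$ --- your closing claim that the returning leaf is ``a genuinely different local leaf'' because ``the two leaves have distinct base points'' is false. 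Since $\phi_{t_{n_j}}(\theta)$ lies on the periodic orbit and is close to $\theta$, the connected component of $\mathcal{F}^{cu}(\theta)\cap W(\theta)$ containing it is the local center-unstable manifold of $\theta$, and by transversality at the hyperbolic point (where $I(\theta)=\{\theta\}$) its intersection with $\mathcal{F}^{s}(\theta)$ is exactly $\{\theta\}$. More fundamentally, the recurrence of a periodic point carries no information beyond its periodicity and can never, by itself, generate homoclinic intersections: a hyperbolic periodic orbit with no homoclinic point at all would still satisfy every hypothesis you use in this step.

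The missing idea --- and the route the paper takes --- is to use the \emph{single} homoclinic point $\eta\in\mathcal{F}^{s}(\theta)\cap\mathcal{F}^{cu}(\theta)$, $\eta\neq\theta$, already furnished by Proposition~\ref{homoclinic}, and push it along its own orbit. Since $\theta$ lies in a hyperbolic set, $\mathcal{F}^{s}(\theta)$ is the genuine stable manifold (Lemma~\ref{Busemann-stable}), so suitable forward iterates $\phi_{s_n}(\eta)$ converge to $\theta$; by invariance of the foliations ($\phi_{s_n}$ preserves $\mathcal{F}^{cu}(\theta)$ for all times and $\mathcal{F}^{s}(\theta)$ for multiples of the period) each $\phi_{s_n}(\eta)$ is again a point of $\mathcal{F}^{s}(\theta)\cap\mathcal{F}^{cu}(\theta)$ distinct from $\theta$. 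This gives the accumulation directly, without Proposition~\ref{contraction} or the shrinking bases of Corollary~\ref{basis}; the symmetric statement for $\mathcal{F}^{u}(\theta)\cap\mathcal{F}^{cs}(\theta)$ then follows by reversing time, as you correctly indicate. The paper additionally records that these iterates carry disks in $\mathcal{F}^{cu}$ converging in the $C^{0}$ topology to a local disk of $\mathcal{F}^{cu}(\theta)$, which is what feeds the horseshoe construction afterwards.
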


\begin{proof}
The proof follows from Proposition \ref{homoclinic}, the contraction of the stable manifold $\mathcal{F}^{s}(\theta)$ by $\phi_{t}$, $t>0$, and the contraction of the unstable manifold $\mathcal{F}^{u}(\theta)$ by $\phi_{t}$, $t <0$. These facts imply that the homoclinic point $\eta \in \mathcal{F}^{s}(\theta) \cap \mathcal{F}^{cu}(\theta)$ has a sequence $\phi_{s_{n}}(\eta) \in \mathcal{F}^{s}(\theta)$ of points in its orbit, endowed with a sequence of relative open $n$-dimensional disks $U^{u}_{n}(\phi_{s_{n}}(\eta)) \subset \mathcal{F}^{cu}(\phi_{s_{n}}(\eta))$ which are relative neighborhoods of $\phi_{s_{n}}(\eta)$, converging to a local disk in $\mathcal{F}^{cu}(\theta)$ in the $C^{0}$ topology.  Moreover, for a homoclinic point $\beta \in \mathcal{F}^{u}(\theta) \cap \mathcal{F}^{cs}(\theta)$, there exists a sequence $\phi_{t_{n}}(\beta) \subset \mathcal{F}^{u}(\theta)$ and a sequence $U^{s}_{n}(\phi_{t_{n}}(\beta)) \subset \mathcal{F}^{s}(\phi_{t_{n}}(\beta))$ of $n$ dimensional disks converging in the $C^{0}$ topology to a disk in $\mathcal{F}^{cs}(\theta)$ containing $\theta$. Since $W(\theta)$ is a local product neighborhood of $\theta$, every disk $U^{u}_{n}(\phi_{s_{n}}(\eta))$ meets every disk $U^{s}_{n}(\phi_{t_{n}}(\beta))$ for $\phi_{s}(\eta), \phi_{t_{n}}(\beta) \in W(\theta)$, thus proving the lemma.

\end{proof}

\begin{figure}[ht]
\begin{center}
\includegraphics[scale=0.64]{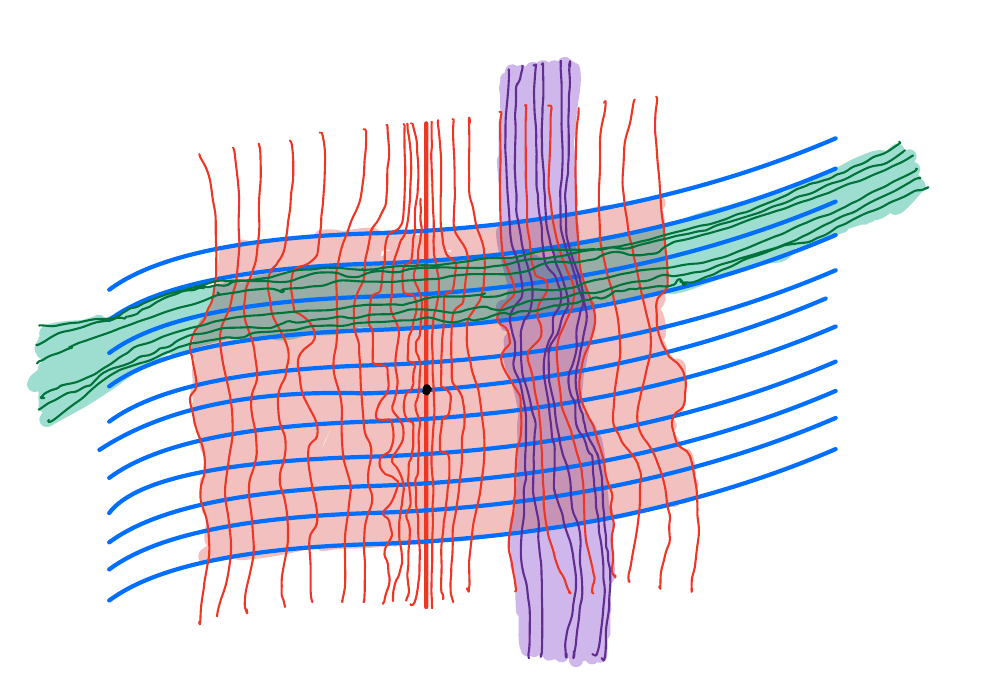}
\begin{picture}(0,0)
\put(-169,92){$\theta$} 
\put(-232,63){$W(\theta)$}
\put(-148,200){$\mathcal{P}^n(W(\theta)) \quad n >0$}
\put(-28,170){$\mathcal{P}^m(W(\theta)) \quad m<0$}
\end{picture}
\end{center}
\vspace{-0.5cm}
\caption{{\small  Construction of a homoclinic point that produces a horseshoe. }}\label{fig-4}
\end{figure}

{\textbf Proof of Theorem \ref{main2}}
\bigskip 

Let $\theta \in T_{1}M$  be a hyperbolic periodic point for the geodesic flow of $(M,g)$, satisfying the assumptions of Theorem \ref{main2}. Let $W(\theta)$ be a local product open neighborhood, let $\bar{W}(\theta)$ be the closure of the union of $W(\theta)$ with all the heteroclinic intersections of is points, namely, the intersections of the type $\mathcal{F}^{s}(\eta) \cap \mathcal{F}^{cu}(\psi) \cap W(\theta)$, and $\mathcal{F}^{u}(\eta) \cap \mathcal{F}^{cs}(\psi) \cap W(\theta)$, for $\eta , \psi \in W(\theta)$. The set $\bar{W}(\theta)$ is a  closed set with nonempty interior, that is foliated by both connected subsets of the stable foliation $\mathcal{F}^{s}$ and the unstable foliation $\mathcal{F}^{u}$. 

By the remarks in the begining of Section 6 we can restrict the dynamics to a local cross section $S^{s}_{U^{s}_{\theta}, \delta}$ of the geodesic flow containing $\theta$ that is foliated by relative open subsets of the leaves of $\mathcal{F}^{s}$ homeomorphic to disks. By Lemma \ref{hom-acc} and the hyperbolicity of $\theta$, we have that the forward and backward orbits of $\bar{W}(\theta)$ intersect near $\theta$ in a sequence $\Omega_{n}$ of nested, foliated  subsets whose dynamics can be codified by a shift of finite type, like in the case of the classical transverse Smale's horseshoe. The details of the proof of the above assertion can be found in \cite{kn:KY}, \cite{kn:Gromov2} for instance, where topological horseshoes were already studied. Their intersection $\Omega$ is a compact nonempty, invariant subset that resembles a transverse horseshoe, the difference being the possible existence of more than one point in each connected component of $\Omega$. We can apply Lemma \ref{fixed-points} to each one of the connected components of $\Omega_{n}$ to conclude that $\Omega$ is in the closure of the set of periodic orbits. 
 The topological entropy of the flow restricted to $\Omega$ is at least the entropy of the correspondent shift (the dynamics in $\Omega$ is in general semi-conjugate to the shift dynamics). This concludes the proof of Theorem \ref{main2}.



\begin{thebibliography}{10}
		
		\bibitem{kn:Anosov} Anosov, D. V.: \textit{Geodesic flows on closed Riemannian manifolds of negative curvature}.
		Proceedings of the Steklov Institute of Mathematics, No. 90 (1967). Translated from the Russian by S. Feder American Mathematical Society, Providence, R.I. 1969 iv+235 pp. 57.50 (53.00)
		
		\bibitem{kn:Arnold} Arnold, V. I.: \textit{Mathematical Methods of Classical Mechanics}. Graduate Texts in Mathematics, 60, Second Edition, Springer. ISBN 0-387-96890-3. 
		
		\bibitem{kn:BGS} Ballmann, W., Gromov, M., Schroeder, V.: \textit{Manifolds of nonpositive curvature}. Progress in Mathematics, 61 Birkhausser, Boston, MA.
		1985. vi + 263 pp. ISBN 081763181X.
		
	\bibitem{kn:BI} Burago, D., Ivanov, S. :\textit{Riemannian tori without conjugate points are flat}. Geom. and Funct. Anal. GAFA, 4 (1994) 259-269.

		\bibitem{kn:Burns} K.~Burns, \textit{The flat strip theorem fails for surfaces with no conjugate points}, Proc. Amer. Math. Soc. \textbf{115} (1992), 199--206.

		\bibitem{kn:Busemann50} Busemann, H.: \textit{The Geometry of geodesics}, Academic Press Inc., New York, 1955. x + 422 pp.
		
		\bibitem{kn:CM} Contreras, G, Mazzuchelli, M: \textit{Proof of the $C^2$-stability conjecture for geodesic flows on compact surfaces} Duke Math. J. \emph{to appear}. 
		
		\bibitem{kn:CS} Croke, C., Schoreder, V.: \textit{The fundamental group of compact manifolds without conjugate points}. Comment. Math. Helv. 61 (1986) n. 1,
		161-175.

\bibitem{kn:Dinaburg} Dinaburg, E. I.: \textit{A connection between various entropy characterizations of dynamical systems}. Izv. Akad. Nauk SSSR Ser. Mat. 35 (1971), 324–366. 
		
		\bibitem{kn:Eberlein} Eberlein, P: \textit{Geodesic flows in certain manifolds without conjugate points}, Trans. Amer. Math. Soc. \textbf{167} (1972), 151--170.
		
		\bibitem{kn:Eberlein-73} Eberlein, P: \textit{When is a geodesic flow of Anosov type? I.} J. Diff. Geom. 8 (3) (1973) 437-463.
		
		\bibitem{kn:Eberlein-96} Eberlein, P.: \textit{Geometry of nonpositively curved manifolds.} Chicago Lectures in Mathematics. University of Chicago Press, Chicago, IL, 1996. vii+449 pp.
		
		\bibitem{kn:Eschenburg} Eschenburg, J. H.: \textit{Horospheres and the stable part of the geodesic flow}, Math. Z. \textbf{153} (1977), 237--251.

\bibitem{kn:GR1} Gelfert, K., Ruggiero, R.: \textit{Geodesic flows modelled by expansive flows}. Proceedings of the Edinburgh Mathematical Society , Volume 62 , Issue 1 (2019 ), pp. 61 - 95

\bibitem{kn:GR2} Gelfert, K., Ruggiero, R.: \textit{Geodesic flows modelled by expansive flows: compact surfaces without conjugate points and continuous Green bundles}. Ann. de l'Institut Fourier, Vol. 73 (2023), no. 6, 2605-2649. 
		
\bibitem{kn:Green} Green, L. W.: \textit{Surfaces without conjugate points}, Trans. Amer. Math. Soc. \textbf{76} (1954), 529--546.


		\bibitem{kn:Gromov} Gromov, M.: \textit{ Hyperbolic Groups.} Essays in Group Theory 8, 75-264 S. M. Gersten Editor, Springer-Verlag.
		
		\bibitem{kn:Gromov2} Gromov, M.: \textit{Three remarks on geodesic dynamics and fundamental group}. Enseignement Math\'{e}matique (2000) Swets and Zeitlinger Editors. 

\bibitem{Hirsch} M. Hirsch : \textit{Differential Topology}.  Graduate Texts in Mathematics, Vol. 33. Springer New York. 
		
		\bibitem{kn:HPS} Hirsch, M., Pugh, C., Shub, M.: \textit{Invariant Manifolds}. Lecture Notes in Mathematics, Vol. 583, Springer. ISBN-13: 978-3540081487.

\bibitem{kn:Hopf} Hopf, E.: \textit{Closed Surfaces Without Conjugate Points}. Proc. Nat. Acad. of Sci. 34, (1948), 47-51.

\bibitem{kn:Irie} Irie, K.: \textit{Dense existence of periodic Reeb orbits and ECH spectral invariants}. Journal of Modern Dynamics, Vol. 9 (2015), 357-363. 

\bibitem{kn:KY} Kennedy, J., Yorke, : \textit{Topological horseshoes}. Transactions of the AMS, 353(6) (2001), 2513-2530.
		
		\bibitem{kn:Klingenberg} Klingenberg, W.: \textit{Riemannian manifolds with geodesic flows of Anosov type}, Ann. of Math. (2) \textbf{99} (1974), 1--13.
		
		\bibitem{kn:ARR} Lazrag, A., Rifford, L., Ruggiero, R.: \textit{Franks' Lemma for $C^{2}$ Ma\~{n}\'{e} perturbations of Riemannian metrics and applications to persistence}. Journal of Modern Dynamics vol. 10, 2 (2016), 319-411.


\bibitem{kn:Liao} Liao, S. T.: \textit{On the stability conjecture}. Chinese Ann. of Math. 1  (1980), 9-30.  

\bibitem{kn:MR} Mamani, E., Ruggiero, R.: \textit{Expansive factors for geodesic flows of compact manifolds without conjugate points and with visibility universal covering}, arXiv:2311.02698 
		
		\bibitem{kn:Mane} Ma\~{n}\'{e}, R.: \textit{An ergodic closing lemma}. Ann. of Math. (2) 116 (1982), n. 3, 503-540.
		
		\bibitem{kn:Mane88} Ma\~n\'e, R.: \textit{A proof of the $C^1$ stability conjecture} Inst. Hautes \'Etudes Sci. Publ. Math., 66:161--210, 1988.
		
		\bibitem{kn:Mane2} Ma\~{n}\'{e}, R.: \textit{On a theorem of Klingenberg.} Dynamical systems and biffurcation theory. M. Camacho, M. Pac\'{\i}fico, F. Takens Editors.
		Pitman Research Notes in mathematics 160 (1987) 319-345.

\bibitem{kn:Manning} Manning, A.: \textit{Topological entropy for geodesic flows}. Ann. of Math. 110 (1979) 567-573. 

\bibitem{kn:Milnor}  Milnor, J, \textit{A note on curvature and fundamental group}. J. of Diff. Geom. 2 (1968) 1-17.

\bibitem{kn:Morse} Morse, H. M.: \textit{A fundamental class of geodesics on any closed surface of genus greater than one}. Trans. Amer. Math. Soc. Vol. 26, 1 (1924), 25-60.
		
		\bibitem{kn:Newhouse} Newhouse, S.: \textit{Quasi-elliptic periodic points in conservative dynamical systems.} Amer. J. Math. 99(5):1061--1087, 1977.
		
\bibitem{kn:Paternain} Paternain, G.: \textit{Geodesic flows}. Progress in Mathematics, Birkhausser, Boston, MA. 
		\bibitem{kn:Pesin} Pesin, Y.: \textit{Geodesic flows on closed Riemannian manifolds without focal points}.  Math USSR Izvestija Vol. 11 (1977) N. 6, 1195-1228. .
		
		\bibitem{kn:Pugh} Pugh, C., Robinson, C.: \textit{$C^{1}$ closing lemma, including Hamiltonians}. Ergodic Theory Dynam. Systems, 3:261--313, 1983.
		
		\bibitem{kn:Ruggiero-AHL} Rifford, L., Ruggiero, R.: \textit{ On the stability conjecture for geodesic flows of manifolds without conjugate points}. Annales Henri Lebesgue, 4: 759--784, 2021. 
		
		\bibitem{kn:Robin-Robinson} Robinson, C.: \textit{Generic properties of conservative systems I and II.} Amer. J. Math.,  92:562--603 and 897--906, 1970.
		
		\bibitem{kn:Ruggiero-SBM} Ruggiero, R.: \textit{Expansive dynamics and hyperbolic geometry}. Bol. Soc. Bras. Mat. vol. 25 (1994) 139-172.
		
		\bibitem{kn:Ruggiero-expansive}  Ruggiero, R. : \textit{ Expansive geodesic flows in manifolds without conjugate points}.
		Erg. Theory and Dyn. Systems vol. 17 (1997) 211-225.
		
		\bibitem{kn:Ruggiero-Ast} Ruggiero, R.: \textit{On the divergence of geodesic rays in manifolds without conjugate points,
			dynamics of the geodesic flow and global geometry}. Ast\'{e}risque 287 (2003) 231-249.
		
		\bibitem{kn:Ruggiero-Ensaios} Ruggiero, R.: \textit{ Dynamics and global geometry of manifolds without conjugate points}. Ensaios Matem\'{a}ticos vol. 12 (2007), 1-181.
		Sociedade Brasileira de Matem\'{a}tica. ISSN 2175-0432.
		
		\bibitem{kn:Ruggiero-TAMS} Ruggiero, R. : \textit{Does the hyperbolicity of periodic orbits of a geodesic flow without conjugate points imply the Anosov property?} Transactions of the AMS, 374 (10) 
		
	\end{thebibliography}
\end{document}